\pgfplotsset{width=8cm,compat=1.9}
\newtheorem{prop}{Proposition}[section]
\newtheorem{lemma}[prop]{Lemma}
\newtheorem{theorem}[prop]{Theorem}
\newtheorem{defn}[prop]{Definition}
\newtheorem*{prop*}{Proposition}
\newtheorem*{theorem*}{Theorem}
\newtheorem{mlem}{Main Lemma}
\newtheorem*{mlem*}{Main Lemma}
\newtheorem*{lemma*}{Lemma}
\newtheorem{ques}[prop]{Question}
\newcommand{\VV}{\mathbb{V}}
\newcommand{\WW}{\mathbb{W}}
\newcommand{\PP}{\mathbb{P}}
\newcommand{\TT}{\mathbb{T}}
\newcommand{\SSS}{\mathbb{S}}
\newcommand{\eps}{\epsilon}
\newcommand{\RR}{\mathbb{R}}
\newcommand{\XX}{\mathbb{X}}
\newcommand{\exmult}{\eta_{\textrm{mult}}}
\newcommand{\exscal}{\epsilon_{\textrm{scal}}}
\newcommand{\exsticky}{\epsilon_{\textrm{sticky}}}
\newcommand{\exfact}{\eta_{\textrm{factor}}}
\title{Outline of the Wang-Zahl proof of the Kakeya conjecture in $\RR^3$}
\author{Larry Guth}
\begin{document}

\maketitle

In the recent preprint \cite{WZ},  Hong Wang and Joshua Zahl proved the 3-dimensional Kakeya conjecture, building on previous papers \cite{WZ1} and \cite{WZ2}.   I am working on digesting and checking the proof.  I wrote an expository article \cite{G} giving an overview of the Kakeya problem and the proof as a whole.  

The papers \cite{WZ1} and \cite{WZ2} prove the sticky case of the Kakeya conjecture, roughly following an approach suggested by Katz and Tao, and dealing with significant technical issues.  The paper \cite{WZ} reduces the general case to the sticky case.  Both parts of the proof are difficult and significant, but reducing to the sticky case is the part that seemed completely out of reach until recently.  

In these followup notes, we give a detailed outline of how to reduce the general Kakeya conjecture to the sticky case.  The argument here is a small variation on the one in \cite{WZ}, which I hope simplifies some technical details.

\section{Introduction to the Kakeya problem}

Suppose that $\TT$ is a set of tubes in $\RR^3$ with radius $\delta$ and length 1.   We write

$$U(\TT) = \bigcup_{T \in \TT} T. $$

Our goal is to understand $|U(\TT)|$, the volume of $U(\TT)$.   We write $|\TT|$ for the cardinality of $\TT$ and we write $|T|$ for the volume of a tube $T \in \TT$.  We begin with a naive question.

\begin{ques} If $\TT$ is a set of $\delta$-tubes in $\RR^3$,  is it true that

$$|U(\TT)| \gtrapprox | \TT | |T| ? $$

\end{ques}

The answer to this question is no.  One problem is that all of the tubes of $\TT$ could be almost equal to each other,  so that $|U(\TT)|$ is almost $|T|$.  To avoid this,  we assume that the tubes of $\TT$ are essentially distinct, meaning that $|T_1 \cap T_2| \ge (1/2) |T_1|$ for any distinct $T_1,  T_2 \in \TT$.  Throughout the survey,  we will assume that we are working with essentially distinct sets of tubes.

But even if we assume that the tubes of $\TT$ are essentially distinct, the answer to this question is still no.  Here is a key counterexample.   Suppose that $K \subset \RR^3$ is a convex set with dimensions $a \times b \times 1$,  with $\delta \le a,b$.   Suppose that $\TT$ is a maximal set of essentially distinct tubes contained in $K$.   If $|K|$ is much bigger than $|T|$,  then we will have $|\TT| |T| \gg |K|$.   

In this example,  the tubes $\TT$ cluster in the convex set $K$.   Next we would like to consider a set of tubes that does not cluster in any convex sets.   First we need some language to make this precise.

Suppose that $\WW$ is a set of convex sets in $\RR^n$.  
We will consider how the sets of $\WW$ cluster in various other convex sets.  More generally, suppose that $\WW$ is a finite set of convex sets in $\RR^n$ and suppose that $K$ is another convex set.  We define

\begin{equation} \label{defWK} \WW[K] := \{ W \in \WW : W \subset K \}
\end{equation}

\begin{equation} \label{defDelta} \Delta(\WW, K) := \frac{ \sum_{W \in \WW[K]} |W| } {|K|}
\end{equation}

We can think of $\Delta (\WW, K)$ as a kind of density that measures how $\WW$ concentrates in $K$.  We define a maximal density

\begin{equation} \label{defDeltamax} \Delta_{max}(\WW) := \max_{K \textrm{ convex}} \Delta(\WW, K)
\end{equation}

The inequality $\Delta_{max}(\WW) \lessapprox 1$ captures the idea that $\WW$ does not cluster in convex sets.

\begin{theorem} \label{thmkak1} (Kakeya conjecuture, Wang-Zahl, \cite{WZ}) Suppose that $\TT$ is a set of $\delta \times \delta \times 1$-tubes in $B_1 \subset \RR^3$ with $ \Delta_{max}(\TT) \lessapprox 1$.  Then 

$$ |U(\TT)| \gtrapprox | \TT| |T|. $$

\end{theorem}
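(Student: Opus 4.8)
The plan is to reduce the general (non-sticky) case of the Kakeya conjecture in $\RR^3$ to the already-established sticky case. The overall strategy is an induction-on-scales argument, organized around the failure of stickiness. Recall that a set of tubes $\TT$ is \emph{sticky} if the map $T \mapsto \ell(T)$ (the central line, or direction) is, up to the natural scale, Lipschitz as a map from the set of tubes with its natural metric to the space of lines --- equivalently, nearby tubes point in nearly the same direction, so that the tube set looks like a $\delta$-discretized Lipschitz graph over a set of directions. The sticky Kakeya theorem from \cite{WZ1}, \cite{WZ2} gives $|U(\TT)| \gtrapprox |\TT||T|$ for sticky $\TT$, and we want to remove the stickiness hypothesis using only the hypothesis $\Delta_{max}(\TT) \lessapprox 1$.

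First I would set up a multiscale decomposition: fix an intermediate scale $\rho$ with $\delta < \rho < 1$, and partition the tubes of $\TT$ into families according to which $\rho$-tube (length $1$, radius $\rho$) they lie inside. Within each $\rho$-tube $T_\rho$, rescale by $1/\rho$ so that the $\delta$-tubes become $(\delta/\rho)$-tubes in $B_1$; after rescaling, the directions spread out by a factor $\rho^{-1}$ (since inside $T_\rho$ the $\delta$-tubes originally occupied an angular range $\lesssim \rho$). The key dichotomy is: either the assignment of $\delta$-tubes to $\rho$-tubes is itself \emph{sticky at scale $\rho$} --- meaning each $\rho$-tube contains about $|\TT| \cdot (\delta/\rho)^2 / \delta^2$... i.e. the ``expected'' number of $\delta$-tubes, with the coarse structure being a Lipschitz graph --- or it is not, in which case the concentration is strictly better than sticky and one can gain a small power of $\rho$. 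In the sticky-at-every-scale case, iterating down through a geometric sequence of scales $1 = \rho_0 > \rho_1 > \cdots > \rho_N = \delta$ forces $\TT$ to be genuinely sticky, and we invoke the sticky Kakeya theorem directly. So the real work is the non-sticky branch.

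In the non-sticky branch, the plan is to exploit the hypothesis $\Delta_{max}(\TT) \lessapprox 1$ together with a Córdoba-type $L^2$ / bush-and-hairbrush argument, combined with an induction on the number of tubes or on scales. Concretely: if stickiness fails at scale $\rho$, then either (a) some $\rho$-tubes contain \emph{more} than the expected number of $\delta$-tubes --- but this is constrained by $\Delta_{max}(\TT) \lessapprox 1$, which says $\sum_{T \subset T_\rho} |T| \lessapprox |T_\rho|$, bounding the concentration --- or (b) the coarse direction map fails to be Lipschitz, so the $\rho$-tubes themselves form a ``spread out'' configuration to which we can apply a coarse Kakeya/bush estimate, and then multiply by the gain from each fiber. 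In case (b) the recursive structure is: $|U(\TT)| \gtrsim (\text{coarse gain from the } \rho\text{-tubes}) \times (\text{gain inside a typical } \rho\text{-tube from the induction hypothesis at the smaller scale } \delta/\rho)$. One must track that the rescaled fibers still satisfy a $\Delta_{max} \lessapprox 1$ hypothesis (possibly with a mild loss absorbed into the $\lessapprox$), so that the induction hypothesis applies. The bookkeeping of the $\delta^{o(1)}$ and $\delta^{-\eps}$ factors through the $\log(1/\delta)$-many scales is delicate but standard in this circle of ideas, and I would organize it so that each scale contributes at most a $\rho^{-C\eps}$ factor, summing to an acceptable $\delta^{-C\eps}$ overall.

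The main obstacle, I expect, is making the dichotomy ``sticky vs.\ gain a power'' genuinely clean and quantitative --- in particular, ensuring that when we are \emph{not} in the sticky case, the failure of the Lipschitz condition for the coarse direction map translates into an honest geometric gain (a genuine extra power of $\rho$ in the volume bound) rather than merely a qualitative statement, and that this gain survives the passage back through the rescaling. A secondary, but serious, obstacle is controlling the density hypothesis along the induction: restricting to a $\rho$-tube and rescaling can in principle create new convex-set concentrations for the fiber tube-sets even when the original $\TT$ had $\Delta_{max}(\TT) \lessapprox 1$, so one needs a pigeonholing step (refining to a subset of tubes and a subset of $\rho$-tubes on which the rescaled density is still controlled) before the induction hypothesis can be applied --- and one must check that this refinement only costs a $\delta^{o(1)}$ factor in the number of tubes. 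This is the heart of why the reduction was hard, and it is where I would expect to spend most of the effort; the argument in \cite{WZ} handles exactly this point, and the variation alluded to in the introduction presumably streamlines the pigeonholing.
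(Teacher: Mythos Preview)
Your high-level picture is right in spirit --- reduce to sticky Kakeya via a multiscale stopping-time, and in the non-sticky branch try to gain a power by comparing a coarse estimate with a fine estimate --- but the heart of the argument is missing, and the place where your outline breaks down is exactly the place you flagged as ``the main obstacle.''

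The naive dichotomy ``either sticky at scale $\rho$, or gain a power from a coarse/fine product'' does not close. In the non-sticky branch you want to write $\mu(\TT) \lessapprox \mu(\TT[T_\rho]) \cdot \mu(\TT_B)$ (fine times coarse) and apply the inductive Katz--Tao bound $K_{KT}(\beta)$ to each factor. The fine factor $\TT[T_\rho]$ is fine: it inherits $\Delta_{max} \lessapprox 1$. But the coarse factor $\TT_B$ --- the short tubes obtained by intersecting $\TT$ with a ball $B$ of radius $r = \delta/\rho$ --- need \emph{not} satisfy $\Delta_{max}(\TT_B) \lessapprox 1$, and no amount of pigeonholing fixes this. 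The canonical bad case is when all the short tubes in $B$ cluster inside a thin slab: then $\Delta_{max}(\TT_B)$ is genuinely large, the induction hypothesis does not apply, and a C\'ordoba/bush/hairbrush argument gives nothing close to what you need. Your outline treats this as a bookkeeping issue, but it is the entire difficulty.

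The paper's resolution is structural and introduces a second inductive statement. One applies a \emph{maximal density factoring}: cover $\TT_B$ by convex sets $W$ (planks of dimensions $a \times b \times r$) on which $\TT_B[W]$ is \emph{Frostman} (density is approximately maximized on $W$ itself), while the collection $\WW$ of such $W$'s is Katz--Tao. This forces a second bootstrapping hypothesis $K_F(\beta)$ --- a Kakeya-type bound under a Frostman rather than Katz--Tao non-concentration hypothesis --- and the proof becomes a two-step bootstrap: first $K_{KT}(\beta) \Rightarrow K_F(\beta)$ (Main Lemma~1), then $K_{KT}(\beta) \wedge K_F(\beta) \Rightarrow K_{KT}(\beta-\nu)$ (Main Lemma~2). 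Both lemmas use sticky Kakeya, and both require reducing incidence problems for planks $a \times b \times 1$ to incidence problems for tubes via the slab $L^2$ lemma. The case analysis in Main Lemma~2 (thick $a \gg \delta$ versus thin $a \approx \delta$; in the thin case, slab/transverse/tangential subcases according to the typical intersection angle of the planks $W$) is where the actual gain is extracted, and none of this machinery appears in your outline. Without the Frostman estimate and the factoring lemma, your recursion has no way to handle the slab-concentration case, and the argument does not go through.
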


Remark.  The original Kakeya conjecture concerns direction-separated tubes.   One version says that  if $\TT$ is a set of $\delta^{-(n-1)}$ $\delta$-tubes in $\RR^n$ with $\delta$-separated directions,  then $|U(\TT)| \gtrapprox 1$.   It is straightforward to check that if the tubes of $\TT$ are direction separated, then $\Delta_{max}(\TT) \lesssim 1$,  and so Theorem \ref{thmkak1} implies this form of the Kakeya conjecture in $\RR^3$. 

The 2-dimensional version of the Kakeya conjecture follows from a short argument based on Cauchy-Schwarz, which has been known for many years.  The 3-dimensional version of the Kakeya conjecture has been intensively investigated for many years.

It is worth mentioning that the direct analogue of Theorem \ref{thmkak1} is false in dimensions $n \ge 4$,  because of counterexamples where the tubes $\TT$ cluster in the $\delta$-neighborhood of a low degree algebraic variety.   For instance,  in $\RR^4$,  the tubes can cluster in the $\delta$-neighborhood of the quadratic hypersurface defined by $x_1^2 + x_2^2 - x_3^2 - x_4^2 = 1$.   It is possible that Theorem \ref{thmkak1} can be generalized to higher dimensions by replacing convex sets by semi-algebraic sets of small complexity. 

Instead of considering only the union of the tubes,  it is also natural to consider a subset of each tube,  which is called a shading.  
In general,  if $\WW$ is a set of convex sets in $\RR^n$,  a shading $Y$ is a subset $Y(W) \subset W$ for each $W \in \WW$.    If $\WW$ is equipped with a shading,  we write $U(\WW) = \bigcup_{W \in \WW} Y(W)$.    If we want to emphasize the shading,  we can write $U(\WW, Y)$.

We can generalize Theorem \ref{thmkak1} to allow shadings,  as long as each $Y(W)$ is large.  If $\WW$ has a shading $Y$,  we define

\begin{equation} \label{deflam}
\lambda(Y) := \min_{W \in \WW} \frac{ |Y(W)| }{|W|}
\end{equation}

\begin{theorem} \label{thmkak} (Kakeya conjecuture, Wang-Zahl, \cite{WZ}) Suppose that $\TT$ is a set of $\delta \times \delta \times 1$-tubes in $B_1 \subset \RR^3$ with a shading $Y$.  If  $ \Delta_{max}(\TT) \lessapprox 1,$ and $\lambda(Y) \gtrapprox 1$,   then 

$$ |U(\TT, Y)| \gtrapprox | \TT| |T|. $$

\end{theorem}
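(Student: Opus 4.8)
The plan is to reduce Theorem~\ref{thmkak} to the \emph{sticky} case of the Kakeya conjecture, available from \cite{WZ1,WZ2}, by an induction on scales. Note first that Theorem~\ref{thmkak1} is the special case $Y(T)=T$ of Theorem~\ref{thmkak}, so the content of Theorem~\ref{thmkak} lies entirely in the presence of shadings; this is not cosmetic, since the sub-problems produced by the induction carry genuine shadings even when one starts from full tubes, and the shading formulation is precisely what lets the induction close.

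\emph{Step 1 (normalization).} By dyadic pigeonholing, pass to $\TT'\subseteq\TT$ with $|\TT'|\gtrapprox|\TT|$ and a sub-shading $Y'\subseteq Y$ with $\lambda(Y')\gtrapprox 1$ on which the shaded multiplicity $\mu(x)=\#\{T\in\TT':x\in Y'(T)\}$ is $\approx m$ on its support, for a single dyadic value $m$. Then $m\,|U(\TT',Y')|\gtrsim\int\mu=\sum_{T\in\TT'}|Y'(T)|\gtrapprox|\TT||T|$, so $|U(\TT,Y)|\ge|U(\TT',Y')|\gtrapprox|\TT||T|/m$ and it suffices to prove $m\lessapprox 1$. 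In words: a non-clustering family of tubes cannot have a large shading concentrated where the shaded multiplicity is high. The same pigeonholing normalizes the dimensions of the convex sets witnessing $\Delta_{max}$ and the distribution of each $Y'(T)$ across dyadic spatial scales, at a cost of $\delta^{o(1)}$ factors that the $\gtrapprox$ absorbs.

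\emph{Step 2 (scale-$\rho$ dichotomy).} Fix an intermediate scale $\delta\le\rho\le 1$, group the $\delta$-tubes into $\rho$-tubes $\SSS$ with the inherited shading $Z(S)=\bigcup_{T\subset S}Y'(T)$, and ask whether the configuration is \emph{sticky} at scale $\rho$: do the $\delta$-tubes inside a typical $\rho$-tube, rescaled to the unit ball, together with $\SSS$ itself, both look like honest Kakeya configurations with consistent densities? If yes, one is in the Katz--Tao-style regime proved in \cite{WZ1,WZ2} (after checking the normalization of Step~1 meets its hypotheses), and the estimate follows. If stickiness fails at some $\rho$, then inside a typical $\rho$-ball the rescaled $\delta$-tubes form a Kakeya configuration at scale $\delta/\rho$ that is strictly simpler --- fewer effective tubes, or confined to a thin neighbourhood of a convex or lower-dimensional set --- so that $\Delta_{max}\lessapprox 1$ and $\lambda\gtrapprox 1$ can be re-established for a smaller sub-problem to which the induction applies; recombining the estimate for $\SSS$ with the estimates for the rescaled $\rho$-balls by a Fubini/product argument, in the spirit of the bush and hairbrush bounds, then yields the estimate for $\TT'$.

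\emph{Main obstacle.} The whole difficulty is Step~2, which is the part that was out of reach before \cite{WZ}: making the dichotomy self-improving. One must (i) extract from the failure of stickiness a genuine decrease in some monotone complexity, so the recursion terminates with total loss only $\delta^{o(1)}$; (ii) show that grouping-and-rescaling preserves $\Delta_{max}\lessapprox 1$ --- the convex-set condition does not transform in any obvious way, and controlling it appears to require the structure theorem for near-extremal unions of convex sets from \cite{WZ}; and (iii) propagate $\lambda\gtrapprox 1$ through every refinement, which is exactly where the (possibly trivial) shadings of the outer problem get spent. The small variation proposed in these notes is meant to streamline the bookkeeping behind (i)--(iii).
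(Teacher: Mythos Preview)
Your high-level picture---reduce to sticky Kakeya by an induction on scales, with shadings essential for closing the induction---matches the paper, but your Step~2 is not yet a plan: the dichotomy ``sticky vs.\ simpler'' is the whole problem, and you have not named the mechanism by which ``not sticky'' becomes ``simpler,'' nor the monotone quantity that decreases. The paper's mechanism is not a single-scale sticky/non-sticky split with direct recursion. It carries a \emph{second} inductive hypothesis alongside the Katz--Tao one: the Frostman estimate $K_F(\beta)$, for families that may have $\Delta_{max}\gg 1$ but are not over-concentrated in any convex subset relative to their density in $B_1$. The monotone quantity in your (i) is then simply the exponent $\beta$: Main Lemma~\ref{lemmain1} proves $K_{KT}(\beta)\Rightarrow K_F(\beta)$, Main Lemma~\ref{lemmain2} proves $K_{KT}(\beta)\wedge K_F(\beta)\Rightarrow K_{KT}(\beta-\nu)$, and one bootstraps down from the trivial $K_{KT}(1)$. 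Sticky Kakeya is not applied once at the top of a dichotomy; it enters inside \emph{both} main lemmas via a stopping-time argument (Lemmas~\ref{lemmadecsticky} and~\ref{lemmasubsticky}) that either locates a sticky sub- or super-configuration, or produces an intermediate scale at which the Frostman constant jumps.

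Your (ii) also misidentifies the tool. The rescaled tubes $\TT_B$ inside a small ball $B$ need not satisfy $\Delta_{max}\lessapprox 1$, and the fix is not a structure theorem for near-extremal unions of convex sets but the elementary \emph{maximal density factoring} (Lemma~\ref{lemmafactmax}): greedily select convex sets $W$ realizing $\Delta_{max}(\TT_B)$; the resulting family $\WW$ is automatically Katz--Tao, and each $\TT_B[W]$ is automatically Frostman in $W$. This is exactly why $K_F(\beta)$ has to be carried through the induction as a separate hypothesis---the sub-problems the factoring produces are Frostman, not Katz--Tao, so a pure Katz--Tao recursion as you describe cannot close. The argument then branches on the shape of $W$ (thick/thin, eccentric/small, slab/transverse/tangential), with the reduction of plank incidences to tube incidences in Section~\ref{secslabplank} doing the work your ``Fubini/product'' gesture points toward.
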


Remark.  This small generalization of Theorem \ref{thmkak1} implies the Hausdorff dimension version of the Kakeya conjecture.   A Kakeya set in $\RR^n$ is a set containing a unit line segment in every direction.   Theorem \ref{thmkak} implies that a Kakeya set in $\RR^3$ has Hausdorff dimension 3.

The proof of Theorem \ref{thmkak} involves a complex induction, and shadings help to make the induction work,  and so it is actually crucial to prove the slightly more general Theorem \ref{thmkak}. 

\subsection{The goal of these notes}

The previous papers of Wang and Zahl, \cite{WZ1} and \cite{WZ2},  prove the sticky case of the Kakeya conjecture.  We will recall the precise statement below.  The paper \cite{WZ} gives a complex inductive argument which reduces the general case of Theorem \ref{thmkak} to the sticky case.  The goal of these notes is to give a detailed outline of this reduction.

We will try to explain and motivate the main ideas.  We will also give a complete outline,  which includes all the cases of the argument.   On the other hand,  we will leave out a lot of details related to pigeonholing, keeping track of shadings,  and keeping track of small parameters.  In Section \ref{secleftout},  we briefly comment on what is left out.  In particular,  we will use $\lessapprox$ and $\ll$ in an imprecise way.   We write $A \lessapprox B$ to mean ``$A$ is not much bigger than $B$'' and $A \ll B$ to mean ``$A$ is much smaller than $B$''.   

We outline here a small variation of the proof in \cite{WZ}.  We have made small adjustments to streamline the proof.   

{\bf Acknowledgements.}
Some of these adjustments were worked out in conversation with Josh and Hong.   It was helpful and fun to talk about the Kakeya proof with Josh and Hong and many others at Oberwolfach in July 2025.  Also,  thanks to Jacob Reznikov for the figures.

\section{Multi-scale structure of $\TT$ and statement of sticky Kakeya}

Our main goal is reduce the general case of Theorem \ref{thmkak} to the sticky case.   So in this section,  we recall the statement of the sticky case.   Before doing that, we set up some language to describe how the tubes of $\TT$ are grouped at different scales.   We will need this language all through the paper.

We will consider sets $\TT$ that are uniform in the following sense.   

\begin{defn} Suppose that $\TT$ is a set of $\delta$-tubes in $B_1 \subset \RR^n$.   
We say that $\TT$ is $\delta^\eps$-uniform if,  for every scale $\rho$ of the form $\delta^{j \epsilon}$,  $j = 1, ..., \epsilon^{-1}$,  there is a set of $\rho$-tubes $\TT_\rho$ so that

\begin{itemize}

\item  $\TT = \bigcup_{T_\rho \in \TT_\rho} \TT[T_\rho]$.

\item The tubes $\TT_\rho$ are essentially distinct.  Therefore each $T \in \TT$ lies in $T_\rho$ for $\sim 1$ choice of $T_\rho \in \TT_\rho$.

\item $| \TT[T_\rho] |$ is roughly constant as $T_\rho$ varies in $\TT_\rho$.

\end{itemize}

\end{defn}

For any $\epsilon > 0$,  it is straightforward to reduce Theorem \ref{thmkak} to the case that $\TT$ is $\delta^\eps$-uniform.

Working with uniform sets is itself an important idea in recent work in the field.
To describe the spacing of a given uniform set of tubes $\TT$,  we can record $| \TT [T_\rho] |$ for each $\rho$ of the form $\rho = \delta^{j \epsilon}$.   This is a lot of information!  In earlier work in geometric measure theory,  people generally described how a set was spaced using just one or two numbers,  such as the Hausdorff dimension of the set.   Recent work in projection theory and geometric measure theory has crucially used the extra information from the complete list of $| \TT [T_\rho]|$.   This philosophy played an important role in the work of Orponen,Shmerkin, and Keleti on the Falconer problem,  and in the solution of the Furstenberg set problem by Orponen, Shmerkin, Ren, and Wang. 

In general,  working with many different scales $\rho$ is crucial to the proof.   The paper \cite{KLT} used two different scales,  $\delta$ and $\delta^{1/2}$.   The proof of Kakeya and the earlier proof of Furstenberg involve considering a huge number of scales $\rho$.   Of course in principle considering more scales gives us more information to work with.   However,  coordinating all that information is a real challenge.    A major part of the proof of Furstenberg and the proof of Kakeya is understanding how to coordinate all these different scales.   While working with multiple scales is a classical topic in harmonic analysis,  these proofs involve novel and subtle ways of doing so. 

The sticky case refers to a uniform set where all scales have similar spacing.   Similar special cases appear in many problems in geometric measure theory.  In other areas,  this special case is often called the (almost) Ahlfors-David regular case.  Here is the definition.

\begin{defn}
We say that $\TT$ is $\delta^\eps$-sticky if $\TT$ is $\delta^\eps$-uniform and for every scale $\rho$ of the form $\delta^{j \epsilon}$, 

$$\delta^\eps \left( \frac{\rho}{\delta} \right)^{2} \le   | \TT[T_\rho] | \le \delta^{-\eps} \left( \frac{\rho}{\delta} \right)^{2}. $$

\end{defn}

The sticky case first appeared in work of Wolff (unpublished) and Katz-Laba-Tao \cite{KLT}.   In the early 2000s,  Katz and Tao developed an approach to the sticky case.   The sticky case of Kakeya was proven  by Wang and Zahl in \cite{WZ1} and \cite{WZ2}, following the general approach of Katz-Tao and dealing with significant technical issues.  They proved the following theorem.

\begin{theorem} (Sticky Kakeya,  \cite{WZ2} building on \cite{WZ1}) \label{thmstickykak}
For every $\alpha  > 0$,  there is $\exsticky(\alpha)
> 0$ so that the following holds.   Suppose $\TT$ is a set of $\delta$ tubes in $B_1 \subset \RR^3$ so that

\begin{itemize}

\item  $\TT$ is $\delta^{\exsticky}$-sticky

\item $\Delta_{max}(\TT) \lessapprox \delta^{-\exsticky}$

\item $\TT$ has a shading with $\lambda(\TT) \ge \delta^{\exsticky}$.   

\end{itemize}

Then 

$$ |U(\TT)| \gtrapprox |\TT|^{-\alpha} |\TT| |T|. $$

\end{theorem}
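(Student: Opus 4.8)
The plan is to run an \emph{induction on scales} driven by the self-similar (Ahlfors--David regular) structure that stickiness supplies. Fix an intermediate scale $\rho=\delta^{j\exsticky}$. By the definition of $\delta^{\exsticky}$-stickiness, rescaling $B_1$ turns the coarse tubes $\TT_\rho$ into an essentially sticky family of $\rho$-tubes satisfying the same hypotheses, while inside each coarse tube $T_\rho$ the fine tubes $\TT[T_\rho]$, rescaled by $\rho^{-1}$, form a sticky family of $(\delta/\rho)$-tubes also satisfying the same hypotheses --- the conditions ``$\delta^{\exsticky}$-sticky'', ``$\Delta_{max}\lesssim\delta^{-O(\exsticky)}$'', and ``$\lambda\gtrsim\delta^{O(\exsticky)}$'' all descend to both scales up to the allowed losses. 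Writing the sought inequality as $|U(\TT)|\gtrsim\delta^{\kappa(\delta)}|\TT||T|$ with $\kappa(\delta)\ge0$ the best achievable loss exponent over all sticky families at scale $\delta$, the two-scale decomposition yields $\kappa(\delta)\le\kappa(\rho)+\kappa(\delta/\rho)+(\text{overlap loss})$; that is, modulo the overlap between the rescaled fine pictures sitting inside the possibly-overlapping regions $T_\rho^{(1)}\cap T_\rho^{(2)}$, the loss exponent is subadditive in $\log(1/\delta)$. Since subadditivity alone is consistent with a linear (hence nonzero) rate, the crux is (i) to show the overlap loss is $o(1)$ and (ii) to rule out a genuinely self-similar near-extremal configuration that would realize a constant rate.

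Next I would carry out the standard opening reductions for Kakeya-type estimates before the main analysis: uniformize and apply the \emph{two-ends} reduction to the shading (so that $|Y(T)\cap B(x,r)|$ grows essentially polynomially in $r$, using $\lambda\gtrsim1$), and pass to a sub-family of coarse tubes that are \emph{robustly transverse}, i.e.\ a typical pair $T_\rho^{(1)},T_\rho^{(2)}$ meets at an angle bounded below. Granting these, the overlap loss is governed by how severely the coarse tubes $\TT_\rho$ can still cluster, and there are exactly two dangerous mechanisms: many coarse tubes through a common point --- a \emph{bush} --- and many coarse tubes in the $\rho$-neighborhood of a common $2$-plane --- a \emph{planebrush}. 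In the bush case the classical bush argument, fed with the two-ends property, shows the bush fills a definite fraction of a ball, so the configuration is strictly better than extremal. In the planebrush case one invokes the planebrush estimate of Katz--Zahl: tubes trapped near a $2$-plane decouple into a two-dimensional Kakeya problem in the plane --- where Cauchy--Schwarz already gives the sharp bound --- times a one-dimensional transverse spread, which again beats the extremal rate. The structural input supplied by \cite{WZ1} is precisely the dichotomy needed for (ii): a sticky family that stays near-extremal under the two-scale decomposition must, at some scale, be concentrated near a $2$-plane, which the planebrush estimate forbids. Threading this dichotomy through the $\sim\exsticky^{-1}$ scales and absorbing the accumulated $\delta^{O(\exsticky)}$ factors into the allowed $|\TT|^{-\alpha}$ loss completes the argument.

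The main obstacle --- and the reason this is two substantial papers rather than a short note --- is \emph{quantitative sharpness}. The textbook bush, hairbrush, and Wolff-type inequalities each lose a fixed power of $\delta$, and with $\sim\exsticky^{-1}$ scales to traverse, those powers compound into a fatal loss; one needs versions of the bush and planebrush bounds that lose only $\delta^{o(1)}$, which is where Guth's polynomial-method \emph{grains} decomposition and the refined multi-scale planebrush analysis of Wang--Zahl are indispensable. A second, pervasive obstacle is bookkeeping: uniformization, the two-ends and transversality reductions, refining shadings, and passing between scales each degrade the hypotheses slightly, so $\exsticky(\alpha)$ must be chosen small enough, and in the correct order relative to the pigeonholing losses, that all the hypotheses survive every one of the $\sim\exsticky^{-1}$ iterations. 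I would treat the sharp planebrush/Wolff inequality as the single hardest ingredient and quote it as a black box if a suitable form is available, concentrating the new effort on carrying the induction through all scales without loss.
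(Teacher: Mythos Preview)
The paper does not contain a proof of this theorem. Theorem~\ref{thmstickykak} is stated with attribution to \cite{WZ1} and \cite{WZ2} and is used as a black box; the entire purpose of these notes is to reduce the general Kakeya conjecture \emph{to} the sticky case, not to reprove the sticky case itself. So there is no proof here to compare your proposal against.

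As a sketch of what is actually done in \cite{WZ1} and \cite{WZ2}, your outline hits several correct high-level themes --- the self-similar structure that stickiness provides, induction on scales, the two-ends reduction, the Katz--Zahl planebrush as a key ingredient, and the observation that the real enemy is accumulated quantitative loss across $\sim\exsticky^{-1}$ scales. However, the organizing structure in those papers is not a bush/planebrush dichotomy. The Katz--Tao program that Wang--Zahl carry out runs through a \emph{plany/grainy} structure theorem: a near-extremal sticky configuration must be plany (tubes through a typical point nearly coplanar) and grainy (locally a union of thin $\delta\times\delta^{1/2}\times\delta^{1/2}$ boxes), and the conjunction of stickiness, planiness, and graininess forces a discretized projection/sum-product configuration that is ruled out by the quantitative projection theorems proved in \cite{WZ1}. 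The planebrush is one tool inside this, not one half of a dichotomy, and the ``grains'' here arise from the structural analysis rather than from Guth-style polynomial partitioning. Your diagnosis of the two main obstacles --- sharp-enough intermediate inequalities and parameter bookkeeping --- is accurate, but the specific route you propose would need substantial revision to match what those papers actually do.
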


The sticky case is the special case where the spacing is similar at all scales.   While it sounds like an important case,  it is also a very special case.   It was not clear for a long time whether the sticky case helps significantly in understanding the general case.    But it turns out that it does.

The sticky case plays a crucial role in the proof of the Kakeya set conjecture.   Earlier,  the sticky case of the Furstenberg set conjecture (proven by Orponen and Shmerkin) played a key role in the full proof of the Furstenberg set conjecture (by Ren and Wang). 


Reducing to the sticky case played a key role in two major results in the field (Kakeya in $\RR^3$ and Furstenberg set conjecture in $\RR^2$).   It looks like it will be a crucial approach in harmonic analysis / geometric measure theory going forward.   Comparing the reduction to the sticky case in these two major problems, there are some related high level ideas but there are also significant differences.  
There is a lot to explore about how reducing to the sticky case might work in other problems in the area.

\section{First glimmer of reducing to the sticky case} \label{secglimmer}

If a set of tubes has $|U(\TT)| \ll | \TT| |T|$, then a typical point in the union must lie in many tubes.  It can be helpful to consider the typical multiplicity.  
In general, we define the multiplicity of a set of convex sets $\WW$ with a shading $Y$ as

\begin{equation} \label{defmu} \mu(\WW) := \frac{ \sum_{W \in \WW} |Y(W)| }{| U(\WW, Y)|}. 
\end{equation}

Information about $\mu(\WW)$ is equivalent to information about $|U(\WW)|$, but it can be more intuitive to think about one than the other.
We can rephrase Theorem \ref{thmkak} in terms of multiplicity as follows.  If $\TT$ is a set of $\delta$-tubes in $\RR^3$ with $\Delta_{max}(\TT) \lessapprox 1$ and $\lambda(Y) \gtrapprox 1$,  then $\mu(\TT) \lessapprox 1$.

The proof is based on a bootstrapping argument where we start with some weaker Kakeya bounds and use them to prove stronger Kakeya bounds.  We set this up as follows.

\begin{defn} We write $K(\beta)$ for the following statement.   If $\TT$ is a set of $\delta$-tubes in $B_1 \subset \RR^3$ so that

\begin{itemize}

\item  $ \Delta_{max}(\TT) \lessapprox 1, $

\item $Y$ is a shading of $\TT$ with $|Y(T)| \gtrapprox |T|$.   

\end{itemize}

then $ \mu( \TT, Y) \lessapprox | \TT |^\beta. $

\end{defn}

The statement $K(1)$ is trivial,  and Theorem \ref{thmkak} is equivalent to knowing $K(\beta)$ for every $\beta > 0$.   We will prove Theorem \ref{thmkak1} by a bootstrapping argument,  where we show that for every $\beta > 0$,  $K(\beta)$ implies $K(\beta - \nu)$ for some small $\nu > 0$.   

In this section,  we give a first glimmer of how such an argument may look. 

We will assume $K(\beta)$.   We suppose $\Delta_{max}(\TT) \lessapprox 1$,  and we have to show that  $\mu(\TT) \ll | \TT|^{\beta}$,  meaning that $\mu(\TT)$ is much smaller than $|\TT|^{\beta}$.   In this section,  we focus on the case that $| \TT | \sim \delta^{-2}$,  and so our goal is to prove that 

$$\mu(\TT) \ll | \TT |^\beta = \delta^{- 2 \beta}. $$

We can also write our goal in terms of $|U(\TT)|$.   When $| \TT |\sim \delta^{-2}$,  $K(\beta)$ implies $|U(\TT)| \gtrapprox \delta^{2 \beta}$,  and so our goal is to prove

$$ |U(\TT)| \gg \delta^{2 \beta}. $$

If $\TT$ is sticky,  then our goal follows by sticky Kakeya.   So we can assume that $\TT$ is not sticky.  We have to take advantage of the fact that $\TT$ is not sticky in order to get a small improvement on the bound coming directly from $K(\beta)$.  


A crucial part of the argument is to study how a set of $\delta \times \delta \times 1$ tubes $\TT$ interacts with a small ball $B = B(x,r) \subset B(0,1)$.   Figure \ref{fig:tube_int_ball_2} shows how tubes of $\TT$ may intersect a small ball $B$:

   \begin{figure}
      \begin{center}
        \tikzset{every picture/.style={line width=0.75pt}} 

\begin{tikzpicture}[x=0.75pt,y=0.75pt,yscale=-1,xscale=1]

\draw  [color={rgb, 255:red, 74; green, 144; blue, 226 }  ,draw opacity=0.48 ] (272.96,42.94) -- (149.17,235.18) -- (144.96,232.47) -- (268.75,40.24) -- cycle ;
\draw  [color={rgb, 255:red, 74; green, 144; blue, 226 }  ,draw opacity=0.48 ] (264.17,35.13) -- (163.44,240.39) -- (158.95,238.18) -- (259.68,32.92) -- cycle ;
\draw  [color={rgb, 255:red, 74; green, 144; blue, 226 }  ,draw opacity=0.48 ] (293.43,217.98) -- (152.57,37.88) -- (156.51,34.8) -- (297.37,214.9) -- cycle ;
\draw  [color={rgb, 255:red, 74; green, 144; blue, 226 }  ,draw opacity=0.48 ] (303.7,212.25) -- (142.96,49.65) -- (146.51,46.13) -- (307.25,208.74) -- cycle ;
\draw  [color={rgb, 255:red, 74; green, 144; blue, 226 }  ,draw opacity=0.48 ] (231.25,240.32) -- (211.01,12.57) -- (215.99,12.13) -- (236.23,239.87) -- cycle ;
\draw  [color={rgb, 255:red, 74; green, 144; blue, 226 }  ,draw opacity=0.48 ] (242.99,241.09) -- (196.55,17.22) -- (201.44,16.2) -- (247.89,240.08) -- cycle ;
\draw  [color={rgb, 255:red, 74; green, 144; blue, 226 }  ,draw opacity=0.48 ] (128.16,156) -- (352.35,111.08) -- (353.33,115.98) -- (129.15,160.9) -- cycle ;
\draw  [color={rgb, 255:red, 74; green, 144; blue, 226 }  ,draw opacity=0.48 ] (128.67,167.75) -- (346.16,97.21) -- (347.7,101.96) -- (130.21,172.5) -- cycle ;
\draw   (184,138.17) .. controls (184,113.22) and (204.22,93) .. (229.17,93) .. controls (254.11,93) and (274.33,113.22) .. (274.33,138.17) .. controls (274.33,163.11) and (254.11,183.33) .. (229.17,183.33) .. controls (204.22,183.33) and (184,163.11) .. (184,138.17) -- cycle ;
\draw  [color={rgb, 255:red, 220; green, 50; blue, 71 }  ,draw opacity=1 ][fill={rgb, 255:red, 220; green, 50; blue, 71 }  ,fill opacity=0.41 ] (194.76,159.24) -- (231.82,94.56) -- (235.29,96.55) -- (198.23,161.22) -- cycle ;
\draw  [color={rgb, 255:red, 220; green, 50; blue, 71 }  ,draw opacity=1 ][fill={rgb, 255:red, 220; green, 50; blue, 71 }  ,fill opacity=0.41 ] (205.65,101.71) -- (262.81,167.46) -- (259.79,170.08) -- (202.63,104.33) -- cycle ;
\draw  [color={rgb, 255:red, 220; green, 50; blue, 71 }  ,draw opacity=1 ][fill={rgb, 255:red, 220; green, 50; blue, 71 }  ,fill opacity=0.41 ] (220.84,95) -- (233.04,181.26) -- (229.08,181.82) -- (216.88,95.56) -- cycle ;
\draw  [color={rgb, 255:red, 220; green, 50; blue, 71 }  ,draw opacity=1 ][fill={rgb, 255:red, 220; green, 50; blue, 71 }  ,fill opacity=0.41 ] (271.48,129.82) -- (187.06,151.34) -- (186.07,147.47) -- (270.49,125.95) -- cycle ;

\end{tikzpicture}
      \end{center}
      \caption{Tubes of $\TT$ intersecting a small ball $B$}\label{fig:tube_int_ball_2}
    \end{figure}
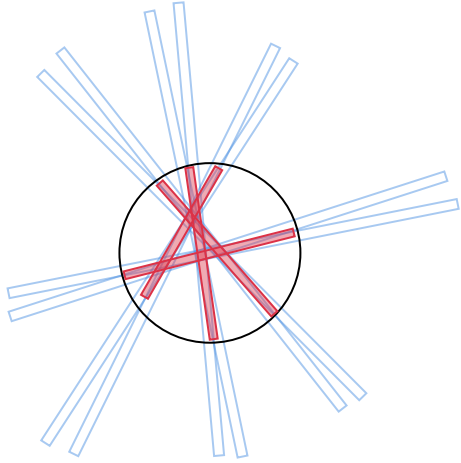

The tubes of $\TT$ are the long blue tubes.   Each long blue tube $T$ intersects $B$ in a shorter red tube $T_B$.   We let $\TT_B$ denote this set of shorter red tubes.    A typical short tube $T_B \in \TT_B$ may be contained in many different tubes $T \in \TT$.   

We define

$$ \TT_{T_B} := \{ T \in \TT: T_B \subset 2T \}. $$

Since $B$ has radius $r$,  the angle between two tubes in $\TT_{T_B}$ is $\lesssim \delta/r$.   We set $\rho = \delta/r$.   So we see that all the tubes of $\TT_{T_B}$ are contained in a single $\rho$-tube $T_\rho \in \TT_\rho$.   Therefore,  we see that 

$$ | \TT_{T_B} | \lessapprox \mu(\TT[T_\rho]).  $$

Now we can bound $\mu(\TT)$ by

\begin{equation} \label{multprodwarmup}
 \mu(\TT) \lessapprox \mu(\TT[T_\rho]) \mu(\TT_B). 
 \end{equation}

Since $\TT$ is not sticky,  we can assume that there is a scale $\rho$ where $| \TT(T_\rho)| \ll (\delta/\rho)^{-2} $.   We choose this value for $\rho$ and examine the factors in (\ref{multprodwarmup}).   Since $\TT[T_\rho] \subset \TT$,  we have $\Delta_{max}(\TT[T_\rho]) \lessapprox 1$,  and so $K(\beta)$ gives us the bound

\begin{equation} \label{mult1}
 \mu(\TT[T_\rho]) \lessapprox | \TT[T_\rho] |^\beta \ll (\delta/\rho)^{- 2 \beta}. 
 \end{equation}

Next we have to bound $\mu(\TT_B)$.   Here it is not clear whether $\Delta_{max}(\TT_B) \lessapprox 1$,  and so it is not clear whether we can use $K(\beta)$ directly.   Let us consider the special case when 

\begin{equation} \label{KTcase}
\Delta_{max}(\TT_B) \lessapprox 1.
\end{equation}

In this special case,  we would have $\mu(\TT_B) \lessapprox | \TT_B |^\beta$.   We would also have $|\TT_B| \lessapprox \frac{|B|}{|T_B|} \sim \rho^{-2}$.   All together this would give the bound

\begin{equation} \label{mult2}
 \mu(\TT_B) \lessapprox |\TT_B|^\beta \lessapprox \rho^{-2 \beta}
\end{equation}

Combining (\ref{mult1}) and (\ref{mult2}) would give

$$ \mu(\TT) \lessapprox \mu(\TT[T_\rho]) \mu(\TT_B) \ll (\delta/\rho)^{-2 \beta} \rho^{-2 \beta} = \delta^{-2 \beta}. $$

This proves our desired bound in the special case when $\Delta_{max}(\TT_B) \lessapprox 1$. 

This sounds nice,  but it was just wishful thinking to suppose that $\Delta_{max}(\TT_B) \lessapprox 1$.   We don't know anything about the set of tubes $\TT_B$.   As far as we know,  $\TT_B$ could even contain all possible $\delta \times \delta \times r$ tubes in $B$.   

Let us next consider the special case when $\TT_B$ consists of all tubes in $B$ -- or more precisely,  when $\TT_B$ is a maximal set of essentially distinct $\delta \times \delta \times r$  tubes in $B$.   This case is also good for us for a different reason.   In this case,  we have $|U(\TT_B)| \sim |B|$.   Since $B$ represented a typical $r$-ball in the $r$-neighborhood of $U(\TT)$,  we would then have

$$ |U(\TT) | \gtrapprox |U(\TT_r)|. $$

The tubes $T_r$ are thicker than the original tubes $T$,  and intuitively, this should help us prove a good lower bound for $U(\TT_r)$.   Our goal inequality,  $\mu(\TT) \ll \delta^{2 \beta}$,  is equivalent to the bound

\begin{equation} \label{volgoal}
|U(\TT)| \gg \delta^{2 \beta}
\end{equation}

We cannot immediately apply $K(\beta)$ to $U(\TT_r)$.   The tubes $\TT_r$ don't necessarily obey $\Delta_{max}(\TT_r) \lessapprox 1$.    The inequality $\Delta_{max}(\TT) \lessapprox 1$ tells us that $| \TT [T_r] | \lessapprox (r/\delta)^2$.   Since $| \TT | \sim \delta^{-2}$,  we get $| \TT_r | \gtrapprox r^{-2}$.    If $| \TT_r  | \gg r^{-2}$,  then $\Delta_{max}(\TT_r) \ge \Delta(\TT_r, B_1) \gg 1$.   To get around this,  define $\tilde \TT_r \subset \TT_r$ to be a random subset of $\sim r^{-2}$ tubes.   We see right away that $\Delta(\tilde \TT_r, B_1) \lesssim 1$.   It is not hard to check that for any convex set $K \subset B_1$,  $\Delta(\tilde \TT_r, K) \lessapprox 1$,  and so $\Delta_{max}(\tilde \TT_r) \lessapprox 1$.    The details of this argument appear in Section \ref{subsecdenssmallballs}.

Now $K(\beta)$ tells us that $|U(\tilde \TT_r)| \gtrapprox r^{2 \beta}$.  Therefore,  we have

$$ |U(\TT)| \gtrapprox |U(\TT_r)| \ge |U(\tilde \TT_r)| \gtrapprox r^{2 \beta} \gg \delta^{2 \beta}. $$

This gives us our desired bound in the special case when $\TT_B$ is full -- when $\TT_B$ contains all the tubes in $B$.  

So far we have gotten our desired goal in two rather opposite special cases: when $\Delta_{max}(\TT_B) \lessapprox 1$,  and when $\TT_B$ is full.   So we can ask whether this inductive argument can be extended to cover all possible cases.

The proof of Wang and Zahl does indeed extend this argument to cover all possible cases,  although it will require significant additional ideas.

One important case we still have to address is when $\TT_B$ is concentrated in a planar slab.  In this case,  $\Delta_{max}(\TT_B)$ is large and yet $|U(\TT_B)| \ll |B|$.   More generally,  if we want to extend this argument to cover ``all cases'',  we need a good way to organize all the cases.

In the next section,  we discuss  how to organize an arbitrary set of tubes $\TT$,  or more generally an arbitrary set of convex sets.

\section{Organizing convex sets}

Recall from the last section that if $\Delta_{max}(\TT_B) \lessapprox 1$,  then we get our desired goal.   So we are concerned with the case when $\Delta_{max}(\TT_B) \gg 1$.   It turns out to be useful to consider the convex sets $W$ so that $\Delta(\TT_B,  W)$ is near maximal.   It is helpful to organize $\TT_B$ using these convex sets. 

This organizational strategy applies not just to tubes in $\RR^3$ but to a set of convex sets in $\RR^n$.   We describe it in the following lemma.  

\begin{lemma} \label{lemmafactmax} (Maximal density factoring lemma) Suppose that $\VV$ is a finite set of convex sets in $\RR^n$.    Then there is a subset $\VV' \subset \VV$ with $|\VV'| \gtrapprox |\VV|$ and a set of convex sets $\WW$ so that

\begin{itemize}

\item For each $W \in \WW$,  there is a set $\VV_{uni}[W] \subset \VV[W]$ so that

$$ \VV' = \bigsqcup_{W \in \WW} \VV_{uni}[W]. $$

\item For each $W \in \WW$,  $\Delta(\VV_{uni}[W], W) \approx \Delta_{max}(\VV')$.

\item For each $W \in \WW$,   $| \VV_{uni}[W] | \approx | \VV'[W] |$.

\item For each $W \in \WW$,  $C_F( \VV_{uni}[W],  W) \le 1$.  

\item $\Delta_{max}(\WW) \lessapprox 1$.

\end{itemize}

\end{lemma}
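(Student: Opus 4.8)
The plan is to build $\WW$ greedily by repeatedly extracting the convex set carrying the near-maximal density. Set $D := \Delta_{max}(\VV)$. First I would run the following iteration: given the leftover family $\VV^{(i)} \subseteq \VV$ (starting from $\VV^{(0)} = \VV$), pick a convex set $W_i$ with $\Delta(\VV^{(i)}, W_i) \geq (1/2)\Delta_{max}(\VV^{(i)})$; inside $W_i$ there is by pigeonholing a ``uniform'' subfamily $\VV_{uni}[W_i] \subseteq \VV^{(i)}[W_i]$ with $|\VV_{uni}[W_i]| \gtrapprox |\VV^{(i)}[W_i]|$ on which all the relevant quantities (sizes $|V|$, and the local densities $\Delta(\VV_{uni}[W_i], W')$ for the $O(1)$ many scales/shapes we track) are roughly constant — this is where the condition $C_F(\VV_{uni}[W_i], W_i) \le 1$ gets arranged, $C_F$ being exactly the ``factoring complexity'' bookkeeping quantity that such a pigeonhole makes $O(1)$. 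Remove $\VV_{uni}[W_i]$ from $\VV^{(i)}$ to form $\VV^{(i+1)}$ and repeat. Each step removes at least a $\gtrapprox 1$ fraction of the sets currently sitting inside a maximal-density ball, so after $\lessapprox 1$ (i.e. polylogarithmically many) rounds the remaining family has density dropped by a constant factor; iterating $O(\log)$ such super-rounds exhausts all but a $\ll 1$ fraction — hence $\VV' := \bigsqcup_i \VV_{uni}[W_i]$ has $|\VV'| \gtrapprox |\VV|$, giving the first bullet, and the disjointness is automatic from the removal.

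Next I would verify the density normalizations. By construction $\Delta(\VV_{uni}[W_i], W_i) \approx \Delta_{max}(\VV^{(i)}) \approx \Delta_{max}(\VV) \approx \Delta_{max}(\VV')$ — the point being that $\Delta_{max}$ only changes by constant factors through the super-rounds, and one checks $\Delta_{max}(\VV') \approx \Delta_{max}(\VV)$ since $\VV' \subseteq \VV$ loses a controlled fraction and a near-maximizing $K$ for $\VV$ still sees most of its mass in $\VV'$ (this needs the uniformity of the extracted pieces, which is why we pigeonholed sizes to be constant). That handles bullets two and three; bullet three also follows from the within-$W_i$ pigeonhole since we keep a constant fraction of $\VV^{(i)}[W_i]$ and the earlier-removed pieces took only a $\ll 1$ fraction out of any fixed $W$. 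Bullet four is imposed directly by the pigeonhole step defining $\VV_{uni}[W_i]$.

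For the last bullet, $\Delta_{max}(\WW) \lessapprox 1$: I would argue by contradiction. Suppose some convex $K$ has $\sum_{W_i \subset K} |W_i| \gg |K|$. Each $W_i \subset K$ contributes $\Delta(\VV_{uni}[W_i], W_i) \approx D$, i.e. $\sum_{V \in \VV_{uni}[W_i]} |V| \approx D |W_i|$; summing over the $W_i \subset K$ and using disjointness of the $\VV_{uni}[W_i]$ gives $\sum_{V \in \VV'[K]} |V| \gtrapprox D \sum_{W_i \subset K} |W_i| \gg D|K|$, so $\Delta(\VV', K) \gg D = \Delta_{max}(\VV')$ — contradiction. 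The main obstacle, and the step I'd spend the most care on, is the bookkeeping that keeps $\Delta_{max}$ from drifting across the many removal rounds while simultaneously forcing the uniformity condition $C_F \le 1$: one has to pigeonhole over enough auxiliary quantities that the greedy removal doesn't secretly concentrate removed mass in some single $W$, yet few enough that the losses stay $\gtrapprox 1$ per round. I would organize this by fixing, before the iteration, the finite list of ``test shapes'' whose densities we track (in the intended application these are essentially dictated by the scales $\delta^{j\eps}$), and checking that each pigeonhole costs only a $\delta^{O(\eps)}$ factor so that the total over all rounds is still $\lessapprox 1$.
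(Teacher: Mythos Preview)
Your overall greedy strategy and your argument for $\Delta_{max}(\WW) \lessapprox 1$ are the same as in the paper, but there is a genuine gap centered on the fourth bullet: you have misidentified what $C_F$ is. By definition (equation \eqref{defcf} in the paper),
\[
C_F(\VV_{uni}[W],W) \;=\; \frac{\max_{K \subset W}\Delta(\VV_{uni}[W],K)}{\Delta(\VV_{uni}[W],W)},
\]
so $C_F \le 1$ is the Frostman condition: the density of $\VV_{uni}[W]$ in \emph{every} convex subset $K \subset W$ is at most its density in $W$ itself. This cannot be arranged by pigeonholing over a finite list of ``test shapes'' or scales --- it has to hold for \emph{all} convex $K$. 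In the paper's proof it comes for free from two choices you did not make: (i) $W_j$ is taken to be the \emph{exact} maximizer of $\Delta(\VV_j,\cdot)$, not a near-maximizer; and (ii) $\VV_{uni}[W_j]$ is taken to be \emph{all} of $\VV_j[W_j]$, not a pigeonholed subfamily. With those choices one has, for any convex $K$,
\[
\Delta(\VV_{uni}[W_j],K) \le \Delta(\VV_j,K) \le \Delta(\VV_j,W_j) = \Delta(\VV_{uni}[W_j],W_j),
\]
which is exactly $C_F \le 1$. If you instead throw away part of $\VV_j[W_j]$ via an internal pigeonhole, the last equality breaks and there is no reason the remaining subfamily should still be Frostman in $W_j$.

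Once you take $\VV_{uni}[W_j] = \VV_j[W_j]$ in full, the ``super-round'' structure is unnecessary: you simply run the greedy removal until $\VV$ is exhausted, and then do a \emph{single} pigeonhole afterwards on the value $\lambda \sim \Delta(\VV_j,W_j)$, keeping only those $j$ in a single dyadic level $J(\lambda)$ and setting $\VV' = \bigsqcup_{j \in J(\lambda)} \VV_{uni}[W_j]$. Since $\Delta_{max}(\VV_j)$ is monotone non-increasing in $j$, and $\VV' \subset \VV_{j_1}$ for the first index $j_1 \in J(\lambda)$, one gets $\Delta_{max}(\VV') \approx \lambda$ directly --- note this need not equal your $D = \Delta_{max}(\VV)$, and bullets two and three refer to $\Delta_{max}(\VV')$, not to $D$. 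Your argument for the fifth bullet then goes through verbatim with $\lambda$ in place of $D$.
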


\begin{proof} We choose sets $W_j$ one at a time by a greedy algorithm.

Choose $W_0$ to maximize $\Delta(\VV, W_0)$.   Then set $\VV_1 = \VV \setminus \VV[W_0]$.    Now choose $W_1$ to maximize $\Delta(\VV_1, W_1)$.   Continue in this way until $\VV_s$ is empty.   Then stop.   

For each $j$,  we set $\VV_{uni}[W_j] = \VV_{j}[W_j]$.  We note that $\VV_{uni}[W_j]$ are disjoint.

Next we pigeonhole $\Delta(\VV_{j},  W_j)$.   We set $J(\lambda) = \{ j: \Delta(\VV_{j}, W_j) \sim \lambda \}$.   We set $\VV_\lambda = \cup_{j \in J(\lambda)} \VV_{uni}[W_j]$.   We note that $\VV = \sqcup_{\lambda} \VV_\lambda$.   We choose $\lambda$ so that $|\VV_\lambda| \approx |\VV|$ and we set $\VV' = \VV_{\lambda}$.   

Now we set $\WW = \{ W_j \}_{j \in J(\lambda)}$,  so that $\VV' = \VV_\lambda = \sqcup_{W \in \WW} \VV_{uni} [W]$.  This gives the first bullet point.

Note that $\Delta(\VV_{j},  W_j) = \Delta_{max}(\VV_{j})$ is non-increasing.  Set $j_1$ to be the first number in $J(\lambda)$.    Since $\VV' \subset \VV_{j_1}$ and $\VV_{uni}[W_j] \subset \VV'$, we have

$$ \Delta_{max}(\VV') \le \Delta_{max}(\VV_{j_1}) = \Delta(\VV_{uni}[W_{j_1}], W_{j_1}) \le \Delta_{max}(\VV'). $$

Therefore,  all the inequalities above are equalities.  

Now for each $j \in J(\lambda)$ we have

\begin{equation} \label{factmaxappeq}
\Delta_{max}(\VV_{j_1})\ge \Delta(\VV_{j}, W_j) = \Delta(\VV_{uni}[W_j], W_j) = \Delta_{max}(\VV_{j}) \sim \Delta_{max}(\VV_{j_1}). 
\end{equation}

\noindent Therefore,  all the inequalities in the above are approximate equalities,  which shows that 

$$\Delta(\VV_{uni}[W_j], W_j) \sim \Delta_{max}(\VV_{j_1}) = \Delta_{max}(\VV'). $$

\noindent This gives the second bullet.   These approximate equalities also show that $\Delta(\VV'[W_j], W_j) \le \Delta_{max}(\VV') \sim \Delta(\VV_{uni}[W_j],  W_j)$,  which implies that $|\VV_{uni}[W_j]| \approx |\VV'[W_j]|$.   This gives the third bullet.

The fourth bullet point follows because we chose each $W_j$ to maximize $\Delta(\VV_{j}, W_j)$.   For each $W_j$,  we have

$$ \Delta(\VV_{uni}[W_j], K) \le \Delta(\VV_{j}, K) \le \Delta(\VV_{j}, W_j) = \Delta(\VV_{uni}[W_j], W_j). $$

The intuition behind the fifth bullet is that if $\Delta(\WW, U) \gg 1$,  then $\Delta(\VV', U) \gg \Delta(\VV', W) \approx \Delta_{max}(\VV')$.    First note that for each $W \in \WW$,

$$ \sum_{V \in \VV_{uni}[W]} |V| = \Delta(\VV_{uni}[W], W) |W| \gtrapprox \Delta_{max}(\VV') |W|. $$

Therefore we have

$$ \sum_{V \in \VV'[U]} |V| \ge \sum_{W \in \WW[U]} \sum_{V \in \VV_{uni}[W]} |V| \gtrapprox \Delta_{max}(\VV') \sum_{W \in \WW[U]} |W| = \Delta_{max}(\VV') \Delta(\WW, U) |U|. $$

Dividing both sides by $|U|$,  we get

$$ \Delta(\VV', U) \gtrapprox \Delta_{max}(\VV') \Delta(\WW, U). $$

And so $\Delta(\WW, U) \lessapprox \frac{ \Delta(\VV', U)}{\Delta_{max}(\VV')} \le 1.$

\end{proof}

Remark.  The distinction between $\VV$ and $\VV'$ is a minor technical issue.   And the distinction between $\VV_{uni}[W]$ and $\VV'[W]$ and $\VV[W]$ is a minor technical issue.  In this outline,  we will ignore these minor technical issues and pretend that $\VV = \VV'$ and that $\VV_{uni}[W] = \VV[W]$.  

\vskip10pt

To get some intuition for this lemma,  let us mention a couple of examples.   

Suppose that $\TT$ is a set of disjoint tubes.   Then $\Delta_{max}(\TT) = 1$.   In this case,  the set $\WW$ given by the lemma is $\WW = \TT$.

Suppose that $\TT$ consists of all the $\delta$ tubes in $B_1 \subset \RR^3$.   (Strictly speaking,  we mean that $\TT$ is a maximal set of essentially disjoint $\delta$-tubes in $B_1 \subset \RR^3$.)  Then we have $|\TT| \sim \delta^{-4}$,  and so $\Delta(\TT,  B_1) \sim \delta^{-2}$.   It is not hard to check that $\Delta(\TT,  W)$ is essentially maximized by taking $W = B_1$.   Therefore,  in this case,  the set $\WW$ given by the lemma is $\WW  = \{ B_1 \}$.

We call the set $\WW$ coming from Lemma \ref{lemmafactmax} the maximal density factoring of $\VV$.   And we call Lemma \ref{lemmafactmax} the maximal density factoring lemma.  

Remark.  Lemma \ref{lemmafactmax} is stated for convex sets,  but convexity did not play any role in the proof.   We can replace convex sets by other classes of sets.   In general,  we can let $\XX$ denote any class of measurable sets in a measure space $\Omega$.   If $\VV \subset \XX$ and $X \in \XX$,  then we define

$$ \Delta(\VV,  X) = \frac{ \sum_{V \in \VV[X]} |V| }{|X|}, $$

$$ \Delta_{max,  \XX}(V) = \sup_{X \in \XX} \Delta(\VV,  X). $$

\noindent Then we can make a version of Lemma \ref{lemmafactmax} for sets from $\XX$ instead of convex sets.   In the statement,  we replace the convexity condition by the condition $\VV \subset \XX$ and $\WW \subset \XX$,  and we replace $\Delta_{max}$ by $\Delta_{max, \XX}$.    For instance,  we can replace general convex sets by slabs with dimensions $s \times 1 \times 1$.   Or we can replace convex sets by semi-algebraic sets of bounded complexity.  

We use the word factoring because these lemmas help us to break the problem of understanding $\mu(\VV)$ into two smaller problems: understanding $\mu(\VV[W])$ and understanding $\mu(\WW)$.   By pigeonholing,  we can reduce to the case that $\mu(\VV[W])$ is essentially constant.   Then it looks intuitive that $\mu(\VV) \lessapprox \mu(\VV[W]) \mu(\WW)$.  Because $\mu(\VV)$ represents the average multiplicity of $\VV$ and not the maximal multiplicity of $\VV$,  we have to be a little bit careful about this inequality.   The inequality does not hold in complete generality,  but it does hold in all the situations we need it.   We discuss this briefly in Section \ref{secleftout}.


Let us now discuss what we gain from the maximal density factoring lemma.   We began with an arbitrary set of convex sets $\VV$.   The set $\VV$ does not obey any ``non-clustering condition'' such as $\Delta_{max}(\VV) \lessapprox 1$.   Using the factoring lemma,  we can often reduce the problem of understanding $\VV$ to the problem of understanding $\VV[W]$ and the problem of understanding $\WW$.   Each of these sets does obey a useful ``non-clustering condition''.    Let us separate out these two conditions and give them names.  

\begin{defn} A set of convex sets $\VV$ is called $C$-convex-Katz-Tao if $\Delta_{max}(\VV) \le C$.  We say $\VV$ is convex Katz-Tao if $\Delta_{max}(\VV) \lessapprox 1$.
\end{defn}

Notice that if $\WW$ is a maximal density factoring of $\VV$,  then $\WW$ is Katz-Tao.

If $\VV$ is a set of convex sets in $K$, we define

\begin{equation} \label{defcf} C_F(\VV, K) = \frac{ \max_{K' \subset K} \Delta(\VV, K') }{\Delta(\VV,K)}. \end{equation}

\begin{defn} If $\VV$ is a set of convex sets all contained in a convex set $K$, we say that $\VV$ is $C$-convex-Frostman in $K$ if $C_F(\VV, K) \le  C$.   We say $\VV$ is convex Frostman in $K$ if  $C_F(\VV, K) \lessapprox 1$.\end{defn}

Notice that if $\WW$ is a maximal density factoring of $\VV$,  then $\VV_{uni}[W]$ and $\VV[W]$ are Frostman in $W$,  while $\WW$ is Katz-Tao.

Using the maximal density factoring,  we can often reduce the problem of understanding an arbitrary collection of convex sets to the problem of understanding the Katz-Tao case and the problem of understanding the Frostman case.  

We have already been discussing the Katz-Tao case,  and the Frostman case is something new.   We take a little time here to digest the Frostman case.
Suppose for example that $\TT$ is Frostman in $B_1$.   This means that $U(\TT) \subset B_1$ and that $\Delta(\TT,  B_1) \gtrapprox \Delta(\TT, K)$ for any convex $K \subset B_1$.   It may well happen that $\Delta(\TT, B_1) \gg 1$.   For example,  if $\TT$ is the set of all $\delta$-tubes in $B_1$,  then $\TT$ is Frostman in $B_1$.   It may also happen that $\Delta(\TT, K) \gg 1$ for some convex set $K \subset B_1$.   But if $\TT$ is Frostman in $B_1$,  then $\Delta(\TT, K) \lessapprox \Delta(\TT, B_1)$,  and so the set $K$ cannot contain `more than its fair share of tubes'.   Another good example of a Frostman set to keep in mind is a random set of $N$ $\delta$-tubes $T$ in $B_1$ where $N$ is chosen so that $N |T| \gg |B_1|$.   

\section{Main bootstrapping lemmas}

Notice that the Kakeya bound $K(\beta)$ is a bound for the Katz-Tao case.   From now on,  we will call it $K_{KT}(\beta)$.   We recall it here, and then we introduce an analogous bound for the Frostman case.   Then we can state the two main bootstrapping lemmas in the proof of Theorem \ref{thmkak}.


\begin{defn} We write $K_{KT}(\beta)$ for the following statement.   If $\TT$ is a set of $\delta$-tubes in $B_1 \subset \RR^3$ so that

\begin{itemize}

\item  $ \Delta_{max}(\TT) \lessapprox 1, $

\item $Y$ is a shading of $\TT$ with $\lambda(Y) \gtrapprox 1$.   

\end{itemize}

then $ \mu( \TT, Y) \lessapprox | \TT |^\beta. $

\end{defn}

Note that $K_{KT}(1)$ is trivial.   It is not hard to prove $K_{KT}(\beta)$ for some $\beta$ slightly less than 1.   Our goal is to prove $K_{KT}(0)$.  

One important case is when $| \TT | \sim \delta^{-2}$.   In this case,  $K_{KT}(\beta)$ gives $\mu(\TT) \lessapprox \delta^{-2 \beta}$,  which is equivalent to $|U(\TT)| \ge \delta^{2 \beta}$.  

\begin{defn} We write $K_{F}(\beta)$ for the following statement.   If $\TT$ is a set of $\delta$-tubes in $B_1 \subset \RR^3$,  and

\begin{itemize}

\item  $C_F(\TT, B_1) \lessapprox 1$. 

\item $Y$ is a shading of $\TT$ with $\lambda(Y) \gtrapprox 1$.,  

\end{itemize}

then $ \mu( \TT) \lessapprox ( \delta^{-2})^\beta \left( \delta^2 | \TT | \right)^{1 - \beta}. $

Or equivalently,

$$ |U(\TT)| \gtrapprox \delta^{2 \beta} \left( \delta^2 | \TT| \right)^{\beta}.$$

\end{defn}

Let us parse the algebra in this bound.   Since $C_F(\TT, B_1) \lessapprox 1$,  we have $| \TT | \gtrapprox \delta^{-2}$.    One important case is when $| \TT | \sim \delta^{-2}$.   In this case,  the bound $C_F(\TT, B_1) \lessapprox 1$ is equivalent to the bound $\Delta_{max}(\TT) \lessapprox 1$.   So $\TT$ is both Frostman and Katz-Tao.   In this case,   $K_{KT}(\beta)$ gives $|U(\TT)| \gtrapprox \delta^{2 \beta}$,  and $K_F(\beta)$ gives the same bound $|U(\TT)| \gtrapprox \delta^{2 \beta}$.    When $| \TT | \gg \delta^{-2}$,  then $K_F(\beta)$ gives a stronger bound,  because the last factor $ \left( \delta^2 | \TT| \right)^{\beta}$ is $\gg 1$.   The exact exponent $\beta$ in this final factor is not crucial in our analysis,  and the argument would work equally well with any positive exponent.   But the exponent $\beta$ is natural here -- for example if $\TT$ consists of all $\delta$-tubes in $B_1$ then $|\TT| \sim \delta^{-4}$ and $K_F(\beta)$ gives the sharp upper bound $\mu(\TT) \lessapprox \delta^{-2}$.  

Comparing $K_{KT}(\beta)$ and $K_F(\beta)$,  neither immediately implies the other.   When $| \TT | \sim \delta^{-2}$ and $\Delta_{max}(\TT) \lessapprox 1$,  $K_{KT}(\beta)$ and $K_F(\beta)$ give the same bound.   The statement $K_F(\beta)$ gives extra information when $\TT$ is Frostman and $| \TT | \gg \delta^{-2}$.   And the statement $K_{KT}(\beta)$ gives extra information when $\TT$ is Katz-Tao and $| \TT | \ll \delta^{-2}$.  

The two main lemmas in the proof are

\begin{mlem} \label{lemmain1} $K_{KT}(\beta)$ implies $K_F(\beta)$.  
\end{mlem}

\begin{mlem} \label{lemmain2}  If $K_{KT}(\beta)$ and $K_{F}(\beta)$ hold,  then $K_{KT}(\beta - \nu)$ holds for $\nu = \nu(\beta) > 0$.   
\end{mlem}

Since the bound $K_{KT}(1)$ is trivial,  the two main lemmas imply that $K_{KT}(\beta)$ and $K_F(\beta)$ hold for all $\beta > 0$,  which implies the Kakeya conjecture,  Theorem \ref{thmkak}.

The proof of each main lemma involves a novel multiscale argument and uses the sticky Kakeya theorem,  Theorem \ref{thmstickykak}.

We note that $K_{KT}(\beta)$  implies some bounds for sets of tubes with any value of $\Delta_{max}(\TT)$.  

\begin{lemma} \label{genKKT} Suppose that $K_{KT}(\beta)$ holds.   Suppose that $\TT$ is a set of $\delta$-tubes in $B_1$ with a shading with $\lambda(\TT) \gtrapprox 1$.   Then

$$ \mu( \TT) \lessapprox \Delta_{max}(\TT)^{1 - \beta}  | \TT |^\beta.$$

\end{lemma}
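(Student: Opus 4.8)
**The plan is to derive Lemma \ref{genKKT} from $K_{KT}(\beta)$ by applying the maximal density factoring lemma, Lemma \ref{lemmafactmax}, to $\TT$.** Write $D = \Delta_{max}(\TT)$. Factoring $\TT$ produces a family $\WW$ of convex sets with $\Delta_{max}(\WW) \lessapprox 1$, together with a partition $\TT = \sqcup_{W \in \WW} \TT[W]$ (ignoring the $\VV'$ versus $\VV$ distinction as the remark allows), where each $\Delta(\TT[W], W) \approx D$ and each $\TT[W]$ is Frostman in $W$. The idea is to bound $\mu(\TT)$ by the product $\mu(\TT[W]) \, \mu(\WW)$, as in the factoring philosophy discussed after Lemma \ref{lemmafactmax}, and then estimate each factor using the non-clustering hypothesis it enjoys.

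For the $\WW$ factor: $\WW$ is Katz-Tao, so $K_{KT}(\beta)$ — applied to $\WW$ rather than to a set of tubes, which is legitimate since the argument is about convex sets with shadings — gives $\mu(\WW) \lessapprox |\WW|^\beta$. For the $\TT[W]$ factor: after rescaling $W$ to a unit ball, $\TT[W]$ becomes a Katz-Tao family (for a single convex set, Frostman and Katz-Tao with $\Delta(\TT[W],W)$ normalized coincide up to the density constant), of cardinality $|\TT[W]| \approx D \cdot (\text{number of } \delta'\text{-tubes-worth allowed})$; concretely $\mu(\TT[W]) \lessapprox (D^{-1}|\TT[W]|)^\beta \cdot D^{1-\beta}$ by the same normalized-density argument as in the parsing of $K_F(\beta)$. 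Since $|\TT| = \sum_W |\TT[W]| \approx |\WW| \cdot |\TT[W]|$ (using that $|\TT[W]|$ is roughly constant after pigeonholing), multiplying the two factors and collecting exponents yields $\mu(\TT) \lessapprox D^{1-\beta} |\TT|^\beta$, which is the claim. One should track the shading: $\lambda(\TT) \gtrapprox 1$ descends to the shading of $\TT[W]$, and the induced shading on $\WW$ also has $\lambda \gtrapprox 1$ after pigeonholing (this is among the details the notes systematically suppress).

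**The main obstacle is justifying the submultiplicativity $\mu(\TT) \lessapprox \mu(\TT[W])\,\mu(\WW)$.** As the text explicitly flags after Lemma \ref{lemmafactmax}, this inequality is false in full generality because $\mu$ is an \emph{average} multiplicity, not a maximal one; it holds only after one pigeonholes so that $|Y(T)|$ and $|U(\TT[W])|$ are roughly constant across $W$, and one must check that the regions $U(\TT[W])$ at unit scale are essentially disjoint inside $\WW$'s geometry so that the union bound $|U(\TT)| \gtrapprox \sum_W |U(\TT[W])| / (\text{overlap})$ combines correctly. Getting this step clean — and handling the subtlety that $\WW$ is a family of convex sets of varying sizes rather than congruent tubes, so "$\mu(\WW)$" and the application of $K_{KT}(\beta)$ to it require the class-$\XX$ generalization of the factoring lemma mentioned in the remark — is where the real care goes; everything else is bookkeeping with the density identities $\Delta(\TT[W],W)\approx D$ and $\Delta_{max}(\WW)\lessapprox 1$.
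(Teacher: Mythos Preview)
Your approach is far more elaborate than the paper's, and it has a genuine circularity problem that you do not flag.

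The paper's proof is a two-line random sparsification: pick a random subset $\TT' \subset \TT$ of cardinality $|\TT|/\Delta_{max}(\TT)$; then $\Delta_{max}(\TT') \lessapprox 1$, so $K_{KT}(\beta)$ gives $\mu(\TT') \lessapprox |\TT'|^\beta = (|\TT|/D)^\beta$, and since $|U(\TT)| \ge |U(\TT')|$ one has $\mu(\TT) \lessapprox D\,\mu(\TT') \lessapprox D^{1-\beta}|\TT|^\beta$. No factoring, no submultiplicativity of $\mu$, no shading gymnastics.

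Your route via the maximal density factoring lemma is not just longer; it appears to be circular. The statement $K_{KT}(\beta)$ is formulated for $\delta$-tubes, not for arbitrary convex sets. The family $\WW$ produced by factoring a set of $\delta$-tubes consists of convex sets (planks of various shapes), and after rescaling $W$ to $B_1$ the tubes $\TT[W]$ become planks as well. So neither ``$\mu(\WW) \lessapprox |\WW|^\beta$'' nor your bound on $\mu(\TT[W])$ follows from $K_{KT}(\beta)$ alone. The paper does eventually prove plank analogues (Lemma~\ref{plankKT1}), but the proof of that lemma \emph{uses} Lemma~\ref{genKKT}; invoking it here would be circular. Your parenthetical ``which is legitimate since the argument is about convex sets with shadings'' is exactly the step that is not legitimate.

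The submultiplicativity issue you identify is real, but it is secondary to the circularity above. The moral: for this lemma, don't factor --- thin.
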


\begin{proof} Choose a random subset $\TT' \subset \TT$ with cardinality $\frac{1}{\Delta_{max}(\TT)} | \TT|$.   It is straightforward to check that $\Delta_{max}(\TT') \lessapprox 1$,  and so $\mu(\TT') \lessapprox |\TT'|^\beta$.  But then

$$ \mu(\TT) \lessapprox \Delta_{max}(\TT) \mu(\TT') \lessapprox \Delta_{max}(\TT) \left( \frac{| \TT|}{\Delta_{max}(\TT)} \right)^\beta. $$

\end{proof}

\section{Slabs and planks} \label{secslabplank}

In the course of our argument,  we will study not just incidences of tubes but also incidences of slabs and of planks.  Incidences of slabs are well understood by simple $L^2$ methods.   Combining our knowledge of slabs with what we know about tubes gives incidence bounds for planks.   A general convex set in $ \RR^3$ is approximately a rectangular plank of dimensions $a \times b \times c$,  and so we get incidence bounds for arbitrary convex sets in $\RR^3$.  We will see that incidence bounds for arbitrary convex sets in $\RR^3$ can be systematically reduced to incidence bounds for tubes.

\subsection{Slabs}

A standard $L^2$ method gives strong bounds for the incidences of slabs.   A key character is the notion of typical intersection angle.  Suppose that $\SSS$ is a set of $\delta \times 1 \times 1$ slabs with shading.  Define the set of incidence triples

$$Tri(\SSS) :=  \{ (x, S_1, S_2) : x \in Y(S_1) \cap Y(S_2) \}. $$

There is a natural measure on $Tri(\SSS)$ coming from the volume measure for $x$ and the counting measure for $S_1, S_2$.  We can define the angle between two slabs $S_1, S_2$, which is well defined up to additive error $\delta$.  Then we define

$$Tri_\theta(\SSS) :=  \{ (x, S_1, S_2) \in Tri(\SSS): \textrm{angle}(S_1, S_2) \sim \theta \}. $$

We say that $\theta$  is a typical angle of intersection for $\SSS$ if $|Tri_\theta(\SSS)| \approx |Tri(\SSS)|$.   If $\mu(\SSS) \ge 2$, then $\SSS$ always has a typical intersection angle $\theta \in [\delta, 1]$.  

\begin{lemma} \label{slab3} Suppose $\SSS$ is a set of $\delta \times 1 \times 1$ slabs contained in $\tilde S$,  a $\theta \times 1 \times 1$ slab,  with $\delta \le \theta \le 1$.   Suppose the slabs of $\SSS$ are shaded with $\lambda(\SSS) \gtrapprox 1$.   If $\theta$ is a typical intersection angle for $\SSS$, then  $|U(\SSS)| \approx |\tilde S|$.
\end{lemma}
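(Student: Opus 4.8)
The plan is the standard $L^2$ argument on incidence triples; the hypothesis that $\theta$ is a typical intersection angle is exactly what makes it work. We may assume $\SSS$ is nonempty. Let $f(x)$ be the number of $S\in\SSS$ with $x\in Y(S)$, so that $\int f=\sum_{S\in\SSS}|Y(S)|$, $\int f^2=|Tri(\SSS)|$ with respect to the natural measure on $Tri(\SSS)$, and $U(\SSS)$ is the support of $f$. Cauchy--Schwarz then gives
$$ |U(\SSS)| \ \ge\ \frac{\bigl(\int f\bigr)^2}{\int f^2} \ =\ \frac{\bigl(\sum_{S\in\SSS}|Y(S)|\bigr)^2}{|Tri(\SSS)|}. $$
Since $U(\SSS)\subset\tilde S$ we trivially have $|U(\SSS)|\le|\tilde S|\approx\theta$, so it remains to produce the matching lower bound $|U(\SSS)|\gtrapprox\theta$, which means bounding the numerator below and the denominator above.

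For the numerator, $\lambda(\SSS)\gtrapprox1$ and $|S|\sim\delta$ give $\sum_{S\in\SSS}|Y(S)|\gtrapprox|\SSS|\,\delta$. For the denominator, the typical-angle hypothesis gives $|Tri(\SSS)|\lessapprox|Tri_\theta(\SSS)|$, and since a shading is a subset,
$$ |Tri_\theta(\SSS)| \ =\ \sum_{\textrm{angle}(S_1,S_2)\sim\theta}|Y(S_1)\cap Y(S_2)| \ \le\ \sum_{\textrm{angle}(S_1,S_2)\sim\theta}|S_1\cap S_2|. $$
The geometric input is that two $\delta\times1\times1$ slabs meeting at angle $\sim\theta$, with $\theta\ge\delta$, intersect in a set of volume $\lesssim\delta^2/\theta$: after a rotation we may take one slab to be the $\delta$-neighborhood of $\{x_3=0\}$ and the other the $\delta$-neighborhood of $\{x_3=\theta x_2\}$, whereupon the conditions $|x_3|\le\delta$ and $|x_3-\theta x_2|\le\delta$ force $|x_2|\lesssim\delta/\theta$, so the intersection lies in a box of dimensions $\sim 1\times(\delta/\theta)\times\delta$. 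Bounding the number of pairs by $|\SSS|^2$ then gives $|Tri_\theta(\SSS)|\lesssim|\SSS|^2\,\delta^2/\theta$.

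Combining the two bounds,
$$ |U(\SSS)| \ \gtrapprox\ \frac{(|\SSS|\,\delta)^2}{|\SSS|^2\,\delta^2/\theta} \ =\ \theta \ \approx\ |\tilde S|, $$
which together with the trivial upper bound yields $|U(\SSS)|\approx|\tilde S|$. The argument is short and the only genuine content is the slab-intersection bound $|S_1\cap S_2|\lesssim\delta^2/\theta$, which is elementary linear algebra; I do not expect a serious obstacle. The point worth emphasizing is the role of the typical-angle hypothesis: without it $Tri(\SSS)$ could be dominated by nearly parallel pairs of slabs, for which $|S_1\cap S_2|$ can be as large as $\sim\delta$, and one would then recover only the useless bound $|U(\SSS)|\gtrapprox\delta$. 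It is precisely the improvement of $\delta$ to $\delta^2/\theta$ that sharpens this to $\gtrapprox\theta$. Note finally that the containment $\SSS\subset\tilde S$ is used only in the trivial upper bound; the lower bound $|U(\SSS)|\gtrapprox\theta$ holds for any shaded family of $\delta$-slabs with $\lambda(\SSS)\gtrapprox1$ whose typical intersection angle is $\theta$.
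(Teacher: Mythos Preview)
Your proof is correct and follows essentially the same $L^2$/Cauchy--Schwarz argument as the paper: define the multiplicity function $f$, bound $\int f^2 = |Tri(\SSS)| \lessapprox |\SSS|^2 \delta^2/\theta$ via the typical-angle hypothesis and the elementary slab-intersection estimate, and combine with $\int f \gtrapprox |\SSS|\,\delta$. Your write-up is slightly more explicit about the geometry of $|S_1\cap S_2|$ and about the role of the hypotheses, but the substance is the same.
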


\begin{proof} We let $f = \sum_{S \in \SSS} 1_{Y(S)}$.   Note that 

$$\int f^2 = |Tri(\SSS)| = \sum_{S_1, S_2 \in \SSS} |Y(S_1) \cap Y(S_2)|. $$

By hypothesis, we have

$$ \int f^2 =  | Tri(\SSS) | \approx \sum_{S_1, S_2 \in \SSS, \textrm{angle}(S_1, S_2) \sim \theta } |Y(S_1)\cap Y(S_2)| \le |\SSS|^2 \frac{\delta^2}{\theta}. $$

On the other hand, we know that $\int f \approx |\SSS| |S| = |\SSS| \delta$.  By Cauchy-Schwarz,  since $f$ is supported in $U(\SSS)$,  

$$ \int f^2 \ge \frac{ ( \int f )^2}{|U(\SSS)|} \approx \frac{ |\SSS|^2 \delta^2 }{ |U(\SSS)|}. $$

Comparing,  we get 

$$ |U(\SSS)| \gtrapprox \theta = |\tilde S|. $$

\end{proof}

This $L^2$ bound leads to sharp estimates for the analogue of Theorem \ref{thmkak} where tubes are replaced by slabs.  We state this estimate for context, although we will not use it directly below.

\begin{lemma} \label{slab1} If $\SSS$ is a set of $\delta \times 1 \times 1$ slabs in $B_1$,  and  $\Delta_{max}(\SSS) \lessapprox 1$, then  $ \mu( S) \lessapprox 1. $

\end{lemma}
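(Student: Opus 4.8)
The plan is to deduce the $\mu(\SSS) \lessapprox 1$ bound from the single-slab $L^2$ estimate of Lemma \ref{slab3} by a pigeonholing-plus-refinement argument, exactly paralleling how the maximal-density factoring lemma organizes a general collection of convex sets. First, if $\mu(\SSS) < 2$ there is nothing to prove, so assume $\mu(\SSS) \ge 2$; then $\SSS$ has a typical intersection angle $\theta \in [\delta,1]$, and after passing to a $\lessapprox 1$-fraction of the incidence triples we may assume every pair of slabs contributing to $Tri(\SSS)$ meets at angle $\sim\theta$. The key geometric point is that if $\theta$ is the typical angle, then on a typical point $x \in U(\SSS)$ the $\sim\mu(\SSS)$ slabs through $x$ all point in directions within $\sim\theta$ of one another — more precisely, after a further pigeonholing in the direction of the slabs, we can cover most of $U(\SSS)$ by $\theta\times 1\times 1$ slabs $\tilde S$ such that each $\tilde S$ contains a sub-collection $\SSS_{\tilde S} \subset \SSS$ for which $\theta$ remains a typical intersection angle and $\lambda(\SSS_{\tilde S}) \gtrapprox 1$.

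Next I would apply Lemma \ref{slab3} to each such $\tilde S$: it gives $|U(\SSS_{\tilde S})| \approx |\tilde S| = \theta$, i.e. the shaded slabs in $\SSS_{\tilde S}$ essentially fill up $\tilde S$. Summing over the (essentially disjoint, or boundedly overlapping) slabs $\tilde S$ in our cover, we get $|U(\SSS)| \gtrapprox (\text{number of }\tilde S)\cdot\theta$. On the other hand, the hypothesis $\Delta_{max}(\SSS) \lessapprox 1$ controls how many slabs of $\SSS$ can live inside each $\tilde S$: since $\Delta(\SSS, \tilde S) \lessapprox 1$ we have $|\SSS_{\tilde S}| \lessapprox |\tilde S|/|S| = \theta/\delta$. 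Combining $|\SSS| \lessapprox (\text{number of }\tilde S)\cdot(\theta/\delta)$ with $|U(\SSS)| \gtrapprox (\text{number of }\tilde S)\cdot\theta$ yields
$$
\mu(\SSS) = \frac{\sum_{S\in\SSS}|Y(S)|}{|U(\SSS)|} \lessapprox \frac{|\SSS|\,\delta}{(\text{number of }\tilde S)\,\theta} \lessapprox \frac{(\text{number of }\tilde S)(\theta/\delta)\,\delta}{(\text{number of }\tilde S)\,\theta} = 1,
$$
which is the claimed bound.

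The step I expect to be the main obstacle is the geometric covering step: producing the family of $\theta\times 1\times 1$ slabs $\tilde S$ covering most of $U(\SSS)$ so that the slabs of $\SSS$ contributing near $x$ really do get organized into a bounded number of $\tilde S$'s with $\theta$ still typical \emph{inside} each $\tilde S$. This requires pigeonholing both in the location and the direction of the slabs of $\SSS$, and then checking that after restricting to $\SSS_{\tilde S}$ the typical-angle hypothesis of Lemma \ref{slab3} is preserved (and the shading stays $\gtrapprox 1$) — the usual bookkeeping that the excerpt has announced it will suppress. Everything else is the $L^2$ input of Lemma \ref{slab3} together with the $\Delta_{max}$ counting bound, both of which are elementary. (One can alternatively phrase the whole argument as an instance of the maximal density factoring lemma applied to the class $\XX$ of $s\times 1\times 1$ slabs, with $\WW$ consisting of the slabs $\tilde S$, which is the viewpoint flagged in the remark after Lemma \ref{lemmafactmax}.)
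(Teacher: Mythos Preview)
The paper does not actually give a proof of Lemma \ref{slab1}; it is stated ``for context'' as a consequence of the $L^2$ bound in Lemma \ref{slab3}, and is not used later. So there is no paper proof to compare against, and your approach --- organize $\SSS$ into $\theta\times1\times1$ slabs $\tilde S$ at the typical-angle scale, apply Lemma \ref{slab3} inside each $\tilde S$, and then count --- is exactly the natural deduction the paper is pointing toward.

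There is, however, one genuine gap in your write-up. You claim the slabs $\tilde S$ in your cover are ``essentially disjoint, or boundedly overlapping'' and then sum $|U(\SSS_{\tilde S})|$ over $\tilde S$ to lower-bound $|U(\SSS)|$. This is not justified: $\theta$-slabs with different normal directions can overlap heavily, so the sum can overcount badly. If you pigeonhole to a \emph{single} $\theta$-cap of directions, then the resulting $\tilde S$'s are parallel and genuinely disjoint --- but then you have only kept a sub-collection $\SSS_C\subset\SSS$, and there is no reason $|\SSS_C|\approx|\SSS|$. Either way the final chain of inequalities does not close as written.

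The fix is to run your argument as an induction (or iteration) on the slab thickness. From Lemma \ref{slab3} you get $|U(\SSS)| \gtrapprox |U(\tilde\SSS)|$ for a collection $\tilde\SSS$ of $\theta$-slabs; using either your alternative phrasing via the maximal density factoring lemma for the class of slabs, or a direct check, one has $\Delta_{max}(\tilde\SSS)\lessapprox 1$ as well. This is exactly the original hypothesis at the larger scale $\theta>\delta$, so you may repeat until the scale reaches $\sim 1$, where the statement is trivial. Equivalently: $\mu(\SSS)\lessapprox \mu(\SSS[\tilde S])\cdot\mu(\tilde\SSS)\lessapprox \mu(\tilde\SSS)$, and iterate. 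Once you phrase it this way, the ``disjointness'' issue disappears, and the rest of your argument (the $\Delta_{max}$ counting bound and the pigeonholing to preserve the typical-angle hypothesis inside each $\tilde S$) goes through.
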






\subsection{Planks}

Next we suppose that $\PP$ is a set of $a \times b \times 1$ planks in $B_1$.   We will study $\mu(\PP)$ by relating it to incidence problems involving slabs and tubes.   In a sense,  an incidence problem for planks can be broken down into an incidence problem for slabs and an incidence problem for tubes.   The incidence problem for slabs is solved completely using Lemma \ref{slab3}.   Assuming $K_{KT}(\beta)$ or $K_F(\beta)$ we get information about the incidence problem for tubes.   In this way,  we are able to prove estimates for $\mu(\PP)$ based on various geometric assumptions about $\PP$.



Suppose that $\PP$ is a set of $a \times b \times 1$ planks in $B_1$, equipped with a shading, and with $\mu(\PP) \gg 1$.  We will define the typical angle of intersection of planks in $\PP$, analogous to the definition above for slabs.  Recall that a plank $P$ has dimensions $a \times b \times 1$ with $a \le b \le 1$.  We let $TP$ be the 2-plane spanned by the two long sides of $P$.  We define the angle between two planks $P_1, P_2$ to be the angle between $TP_1, TP_2$.  It is well defined up to an additive error of $a/b$.  We consider as above the incidence triples

$$ Tri(\PP) := \{ (x, P_1, P_2): x \in Y(P_1) \cap Y(P_2) \}. $$
\noindent As above, $Tri(\PP)$ has a natural measure coming from volume measure in $x$ and counting measure in $P_1, P_2$.  We define incidence triples at angle $\theta$ as

$$ Tri_\theta(\PP) := \{ (x, P_1, P_2) \in Tri(\PP), \textrm{angle}(P_1,P_2) \sim \theta \}. $$

We say that $\theta$ is a typical angle of intersection for $\PP$ if $| Tri_\theta(\PP) | \approx |Tri(\PP) |$.   Assuming $\mu(\PP) \ge 2$,  we can always find a typical angle of intersection $\theta \in [a/b, 1]$.  If $\theta = a/b$, it means that the planks of $\PP$ intersect ``tangentially''.  

Next we consider slabs $S \subset B_1$ with dimensions $\theta \times 1 \times 1$.  For a given $\theta \times 1 \times 1$ slab $S$,  define

$$ \PP_S = \{ P \in \PP:  P \subset S \textrm{ and  angle}(TP, TS) \lesssim \theta \}. $$

We note that

$$\PP= \sqcup_S \PP_S, $$

\noindent where the union is over all essentially distinct $\theta \times 1 \times 1$ slabs $S \subset B_1$.   

\begin{lemma} \label{lemmaredslab} Suppose that $\theta$ is a typical intersection angle of $\PP$.  Then

$$ \mu(\PP) \approx \max_S \mu(\PP_S). $$

\end{lemma}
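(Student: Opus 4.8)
The plan is to prove the inequality $\mu(\PP) \approx \max_S \mu(\PP_S)$ in two directions, exploiting the slab decomposition $\PP = \sqcup_S \PP_S$ together with the $L^2$ slab estimate, Lemma \ref{slab3}.

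\textbf{The easy direction.} First I would show $\mu(\PP) \gtrapprox \max_S \mu(\PP_S)$. Fix the slab $S$ realizing the max. Since $\PP_S \subset \PP$ and the shading is inherited, every incidence triple of $\PP_S$ is an incidence triple of $\PP$, so $|Tri(\PP_S)| \le |Tri(\PP)|$. Also $U(\PP_S, Y) \subset U(\PP, Y)$, so $|U(\PP_S)| \le |U(\PP)|$. The numerator $\sum_{P \in \PP_S} |Y(P)|$ is at most $\sum_{P \in \PP} |Y(P)|$. These comparisons by themselves are not quite enough because $\mu$ is a ratio; the clean way is to use the identity $\sum_{P} |Y(P)| = \mu(\PP) |U(\PP)|$ and that $\int (\sum_P 1_{Y(P)})^2 = |Tri(\PP)|$, together with Cauchy--Schwarz $\mu(\PP) \ge |Tri(\PP)| / (\sum_P |Y(P)|)$ comparing against the slab. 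Here I would use that for a fixed $S$ the tubes (planks) of $\PP_S$ behave like slabs at scale $\theta$, so that passing to $\PP_S$ does not lose in the relevant ratio. In practice this direction is the routine one: $\PP_S$ is a subfamily, multiplicity of a subfamily can only go up when the subfamily is chosen to isolate the concentration, and pigeonholing makes this precise up to $\lessapprox$ factors.

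\textbf{The hard direction.} The substance is $\mu(\PP) \lessapprox \max_S \mu(\PP_S)$, and this is where the typical-angle hypothesis and Lemma \ref{slab3} enter. Let $f = \sum_{P \in \PP} 1_{Y(P)}$, so $\int f^2 = |Tri(\PP)|$. Since $\theta$ is a typical intersection angle, $|Tri(\PP)| \approx |Tri_\theta(\PP)|$, i.e.\ almost all incident pairs $(P_1, P_2)$ meet at angle $\sim \theta$. For such a pair, their common points lie in a $\theta \times 1 \times 1$ slab; more usefully, I want to say that the contribution to $\int f^2$ is governed, slab by slab $S$, by $\int f_S^2$ where $f_S = \sum_{P \in \PP_S} 1_{Y(P)}$. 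The plan is: (i) pigeonhole so that $\mu(\PP_S)$ is essentially constant $\sim M$ over the slabs $S$ that carry most of the mass; (ii) for each such $S$, write $\mu(\PP_S) = \frac{\sum_{P \in \PP_S}|Y(P)|}{|U(\PP_S)|}$ and use Lemma \ref{slab3} — applied to the slabs $\tilde S = S$ with the ``slab shading'' obtained by fattening each $Y(P)$ to the $\theta \times 1 \times 1$ slab through $P$ — to conclude $|U(\PP_S)| \approx |\tilde S| = \theta$ when $\theta$ is a typical angle for $\PP_S$; (iii) sum over $S$ and use the slab-level non-concentration to avoid overcounting $U(\PP)$. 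Concretely, since the slabs $S$ are essentially distinct $\theta$-slabs in $B_1$, they overlap with bounded multiplicity in the relevant sense, so $|U(\PP)| \gtrapprox$ (number of active $S$) $\cdot$ (typical $|U(\PP_S)|$) divided by the slab overlap, while $\sum_P |Y(P)| = \sum_S \sum_{P \in \PP_S} |Y(P)| = \sum_S \mu(\PP_S)|U(\PP_S)| \approx M \sum_S |U(\PP_S)|$; dividing gives $\mu(\PP) \lessapprox M$.

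\textbf{Main obstacle.} The crux is step (ii)--(iii): I need that the typical angle $\theta$ for the global family $\PP$ is still a typical angle for $\PP_S$ for most $S$, so that Lemma \ref{slab3} is applicable slab-by-slab, and I need the ``slabs $U(\PP_S)$ tile $U(\PP)$ with bounded overlap'' to be genuinely true rather than wishful. The overlap control should come from the fact that a point $x \in U(\PP)$ with multiplicity lies in planks pointing in $\sim$ all directions that are incident there, but the typical-angle condition forces those directions to cluster at scale $\theta$, hence $x$ lies in $\lessapprox 1$ of the slabs $S$ (after the usual pigeonholing). Making the passage from ``typical angle for $\PP$'' to ``typical angle for most $\PP_S$'' precise — i.e.\ that the incidence triples of $\PP$ at angle $\sim \theta$ distribute across the slabs in proportion to $|Tri(\PP_S)|$ — is the technical heart, and is exactly the kind of pigeonholing the notes declare they will suppress. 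So the honest plan is: reduce to uniform $\mu(\PP_S) \sim M$ and uniform $|Tri(\PP_S)|$, observe that $\theta$-angle incidences of $\PP$ are, by definition of $\PP_S$, precisely the within-slab incidences (a plank pair at angle $\sim\theta$ lies in a common $\theta$-slab, up to constants), hence $\sum_S |Tri(\PP_S)| \approx |Tri(\PP)|$, then chain Cauchy--Schwarz with Lemma \ref{slab3} on each slab and add up.
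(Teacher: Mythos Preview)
Your final paragraph contains the right idea and is essentially the paper's entire proof: if $P_1,P_2$ intersect at angle $\sim\theta$ and $P_1\in\PP_S$, then $P_2$ lies in (a comparable) $\PP_S$, so the $\theta$-angle incidences of $\PP$ are exactly the within-slab incidences. The paper draws the conclusion directly from this: for a typical $P\in\PP_S$, a good fraction of the planks hitting $Y(P)$ already lie in $\PP_S$, so the multiplicity through $P$ coming from $\PP$ equals that coming from $\PP_S$, and averaging gives $\mu(\PP)\approx\mu(\PP_S)$. No $L^2$ input, no slab overlap bookkeeping.

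The scaffolding you build around this observation, however, is off. Step (ii) is wrong: Lemma~\ref{slab3} is a statement about $\delta\times1\times1$ slabs filling a $\theta$-slab, and it does \emph{not} give $|U(\PP_S)|\approx\theta$ for a family of $a\times b\times1$ planks---there is no reason the planks of $\PP_S$ should fill $S$. (That filling phenomenon is exactly what Lemma~\ref{lemmathicken} establishes \emph{after} passing to $\theta b\times b\times b$ boxes, and it is a separate lemma for a reason.) Step (iii)'s ``bounded slab overlap'' is also not how the argument runs: many $\theta$-slabs pass through a single point, so you cannot control $|U(\PP)|$ by a disjointness-of-$U(\PP_S)$ count. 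You do not need either step once you have the key observation; the multiplicity comparison is pointwise along a typical plank, not a volume comparison across slabs. Strip out the detour through Lemma~\ref{slab3} and you have the paper's proof.
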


\begin{proof} [Proof sketch] Since $\theta$ is a typical angle of intersection, we typically see that if $P \in \PP_S$, then a good fraction of the other planks that intersect $\PP$ also lie in $\PP_S$.  
\end{proof}

Next we will take advantage of our understanding of slabs.  Lemma \ref{slab3} gives us the following local bound about the intersections of planks:

\begin{lemma} \label{lemmathicken} Suppose that $\theta$ is a typical angle of intersection for $\PP$.   Consider a typical plank $P_1$ and a typical point $x \in Y(P_1)$.  Let $Q$ be a box of dimensions $\theta b \times b \times b$,  centered at $x$,  with the long directions parallel to $T P_1$.   Then

\begin{equation} \label{eqPfillQ}
 |U(\PP) \cap Q| \gtrapprox |Q|. 
 \end{equation}

\end{lemma}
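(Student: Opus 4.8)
\textbf{Proof proposal for Lemma \ref{lemmathicken}.}

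The plan is to reduce the statement about planks intersecting the box $Q$ to the slab incidence bound of Lemma \ref{slab3}, by collapsing the $b$-ball $Q$ onto a scale where each plank of $\PP$ looks like a full-dimensional slab. First I would rescale so that $Q$ becomes a unit ball: apply the linear map sending $Q$ to $B_1$, which stretches the two long directions (lengths $b$) by $b^{-1}$ and the short direction (length $\theta b$) by $(\theta b)^{-1}$. Under this map, a plank $P\in\PP$ with $TP$ making angle $\lesssim\theta$ with $TP_1$ gets sent to a set of dimensions roughly $(a/(\theta b))\times 1 \times 1$ inside a $\theta\times 1\times 1$ region; since $\theta\ge a/b$ we have $a/(\theta b)\le 1$, so its shortest dimension is $\lesssim 1$ and it is contained in a $\theta\times 1\times 1$ slab $\tilde S$ (the image of $Q$'s own $\theta b\times b\times b$ shape, rescaled). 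The key point is that the rescaled planks are genuine $\delta'\times 1\times 1$-type slabs for the appropriate $\delta'$, all lying in the single slab $\tilde S$.

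Next I would verify the hypotheses of Lemma \ref{slab3} for this rescaled family $\SSS$ of slabs. The shading hypothesis $\lambda(\SSS)\gtrapprox 1$ is inherited from the shading on $\PP$, after passing to the sub-planks that actually meet $Q$ substantially — here is where ``a typical plank $P_1$ and a typical point $x\in Y(P_1)$'' is used: for typical choices, a positive fraction of $Y(P_1)\cap Q$ is present, and the planks passing near $x$ do so in a balanced way. The crucial hypothesis is that $\theta$ be a \emph{typical} intersection angle for the rescaled slab family. This follows from the assumption that $\theta$ is a typical intersection angle for $\PP$: the incidence triples of $\PP$ are dominated by pairs meeting at angle $\sim\theta$, and localizing to $Q$ around a typical $(P_1,x)$ preserves this — after rescaling, an angle of $\sim\theta$ between 2-planes near $TP_1$ becomes an angle of $\sim 1$ between the corresponding slab planes, so the rescaled family has $\sim 1$ (hence typical) as its intersection angle, which is exactly the regime where Lemma \ref{slab3} gives $|U(\SSS)|\approx|\tilde S|$. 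Undoing the rescaling converts this into $|U(\PP)\cap Q|\gtrapprox |Q|$.

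The main obstacle, I expect, is making the phrase ``typical plank $P_1$, typical point $x$'' do the right work — i.e. checking that the localization-and-rescaling step genuinely inherits both the shading density $\lambda\gtrapprox 1$ \emph{and} the typical-angle property from the global statements about $\PP$, rather than these being destroyed when we restrict to the planks meeting a single small box $Q$. Concretely, one must show that for a density-$\gtrapprox 1$ set of pairs $(P_1,x)$ with $x\in Y(P_1)$, the set $\PP_Q(x):=\{P\in\PP : x\in Y(P),\ \mathrm{angle}(TP,TP_1)\lesssim\theta\}$ has enough planks, with enough shading inside $Q$, and with the pairwise intersection angles inside $Q$ still concentrated at scale $\theta$. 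This is a pigeonholing argument: one slices the global incidence-triple measure $|Tri_\theta(\PP)|\approx|Tri(\PP)|$ by the first plank and the point, discards the atypical slices, and observes that in a typical slice the remaining planks all lie within a $\theta$-tube of directions around $TP_1$ and within distance $\lesssim\theta$ in the transverse direction from $x$ — hence within a box of $Q$'s shape — and that their shadings near $x$ are comparable to a full fraction of $|Q|$-scale measure. Once this bookkeeping is in place the geometric rescaling and the appeal to Lemma \ref{slab3} are routine.
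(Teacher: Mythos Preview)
Your approach is correct and is essentially the paper's: restrict to planks $P$ meeting $Q$ with $\mathrm{angle}(TP,TP_1)\lesssim\theta$, observe that each $P\cap Q$ is a slab, use typicality of $(P_1,x)$ to inherit the shading and the typical-angle-$\theta$ property for this local family, and invoke Lemma~\ref{slab3}. The paper does this slightly more directly: it does not rescale anisotropically, but simply notes that each $P\cap Q$ is an $a\times b\times b$ slab inside the $\theta b\times b\times b$ box $Q$, so (after an isotropic dilation by $1/b$, or equivalently applying Lemma~\ref{slab3} at scale $b$) we have $(a/b)\times 1\times 1$ slabs in a $\theta\times 1\times 1$ container with typical angle $\theta$, which is exactly the hypothesis of Lemma~\ref{slab3}. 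Your anisotropic rescaling sends $Q$ to the unit cube (not to a $\theta\times 1\times 1$ slab as you wrote --- that is a slip), and correspondingly sends the typical angle to $\sim 1$; this is fine and gives the same conclusion, but the isotropic version avoids having to track how angles transform under the shear.
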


\begin{proof} [Proof sketch]  Let $\PP_Q$ be the set of $P \in \PP$ so that $P$ intersects $Q$ and the angle between $P$ and $P_1$ is $\lesssim \theta$.  For each $P \in \PP_Q$, $P\cap Q$ is a slab of dimensions $a \times b \times b$.  For typical $x, P_1$, the slabs of $\PP_Q$ have $\mu(\PP_Q) \gg 1$ and typical intersection angle $\theta$.  By Lemma \ref{slab3},  we see that

\begin{equation} \label{eqPfillQ2}
 |U(\PP) \cap Q| \gtrapprox |Q|. 
 \end{equation}

\end{proof}

Suppose $\theta$ is a typical intersection angle for $\PP$.  Choose $S$ a $\theta \times 1 \times 1$ slab so that $\mu(\PP) \approx \mu(\PP_S)$.  Next we let $\PP_{thick, S}$ denote thickened planks with dimensions $\theta b \times b \times 1$.  (If $\theta = a/b$, then $\PP_{thick,S}$ is just $\PP_S$.  Otherwise, the planks of $\PP_{thick,S}$ are thicker than those of $\PP_S$.)   We can define $\PP_{thick, S}$ for each $S$,  and we define $P_{thick} = \cup_S P_{thick, S}$.  After pigeonholing, we can assume that each plank $P_{thick} \in \PP_{thick}$ contains $N$ planks of $\PP$.    

\begin{lemma} \label{lemmaredplanktube} In the setup in the last paragraph, we have

$$ |U(\PP_S)| \approx |U(\PP_{thick, S})|,  $$

and 

$$ |U(\PP)| \approx \sum_S |U(\PP_{thick,S})|. $$

Equivalently, 

$$ \mu(\PP) \approx \mu(\PP_S)  \approx  \frac{aN }{b \theta} \mu(\PP_{thick, S}). $$

\end{lemma}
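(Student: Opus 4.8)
The plan is to establish the two volume comparisons
$$ |U(\PP_S)| \approx |U(\PP_{thick,S})| \qquad \textrm{and} \qquad |U(\PP)| \approx \sum_S |U(\PP_{thick,S})| ; $$
the three displayed formulas are then equivalent by elementary bookkeeping. Indeed $|P| = ab$, $|P_{thick}| = \theta b^2$, and each $P_{thick} \in \PP_{thick,S}$ contains $\sim N$ planks of $\PP_S$ (a plank of $\PP$ sitting inside the length-$1$ plank $P_{thick}$ automatically makes angle $\lesssim \theta$ with $TS$ and lies in $O(1) \cdot S$, hence belongs to $\PP_S$), so $|\PP_S| = N|\PP_{thick,S}|$; after pigeonholing, the shadings of $\PP$, of $\PP_S$, and of the thickened planks have comparable density, so that $\mu(\PP_S) \approx \frac{aN}{b\theta}\, \mu(\PP_{thick,S}) \cdot \frac{|U(\PP_{thick,S})|}{|U(\PP_S)|}$ and similarly for the global statement; and $\mu(\PP) \approx \mu(\PP_S)$ is exactly Lemma \ref{lemmaredslab}.

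For the first comparison, the inequality $|U(\PP_S)| \lessapprox |U(\PP_{thick,S})|$ is immediate from $U(\PP_S) \subset U(\PP_{thick,S})$. The reverse inequality is the heart of the matter and uses Lemma \ref{lemmathicken}. Fix $P_{thick} \in \PP_{thick,S}$ and a typical sub-plank $P_1 \in \PP_S$ contained in it, and tile $P_{thick}$ by $\sim b^{-1}$ boxes $Q$ of dimensions $\theta b \times b \times b$ with long directions parallel to $TP_1$, arranged along $P_1$ so that for a typical box the center lies in $Y(P_1)$. For such a box, Lemma \ref{lemmathicken} gives $|U(\PP) \cap Q| \gtrapprox |Q|$, and in fact the planks realizing this — the $P \in \PP$ meeting $Q$ with $\textrm{angle}(P, P_1) \lesssim \theta$ — all lie in $O(1) \cdot S$ at angle $\lesssim \theta$ to $TS$: since $P_1 \in \PP_S$ we have $\textrm{angle}(TP_1, TS) \lesssim \theta$ and $Q \subset P_{thick} \subset O(1) \cdot S$, so any such length-$1$ plank stays within $O(\theta)$ of the central plane of $S$. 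Hence $|U(\PP_S) \cap Q| \gtrapprox |Q|$; summing over the $\gtrapprox b^{-1}$ good boxes of each $P_{thick}$ gives $|U(\PP_S) \cap P_{thick}| \gtrapprox |P_{thick}|$, and summing over $P_{thick} \in \PP_{thick,S}$ — using that, after pigeonholing, the thickened planks have essentially uniform overlap — yields $|U(\PP_{thick,S})| \lessapprox |U(\PP_S)|$.

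For the second comparison, $\PP = \sqcup_S \PP_S$ gives $U(\PP) = \bigcup_S U(\PP_S)$, hence $|U(\PP)| \le \sum_S |U(\PP_S)|$. For the reverse, the pigeonholing in Lemma \ref{lemmaredslab} lets us assume $\mu(\PP_S) \approx \mu(\PP)$ for every $S$ with $\PP_S \ne \emptyset$, so
$$ \mu(\PP)\, |U(\PP)| = \sum_{P \in \PP} |Y(P)| = \sum_S \mu(\PP_S)\, |U(\PP_S)| \approx \mu(\PP) \sum_S |U(\PP_S)| , $$
and then $|U(\PP)| \approx \sum_S |U(\PP_S)| \approx \sum_S |U(\PP_{thick,S})|$ by the first comparison.

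I expect the main obstacle to be the reverse inequality in the first comparison. Two points need care: that the planks which locally fill a box $Q$ genuinely lie in $\PP_S$ rather than merely in $\PP$ (handled by the containment computation above, which crucially uses $P_1 \in \PP_S$ and $Q \subset P_{thick}$), and that summing the per-$P_{thick}$ lower bounds loses only a bounded factor — this is where one pigeonholes the multiplicity of the thickened planks to be roughly constant. The second comparison, by contrast, is essentially a repackaging of Lemma \ref{lemmaredslab} together with the first comparison.
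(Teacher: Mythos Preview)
Your argument is correct and follows the same route as the paper: use Lemma~\ref{lemmathicken} to show that $U(\PP_S)$ fills a typical $P_{thick}$, deduce $|U(\PP_S)| \approx |U(\PP_{thick,S})|$, then invoke Lemma~\ref{lemmaredslab} (with pigeonholing) for $\mu(\PP) \approx \mu(\PP_S)$ and the sum over $S$, and finish with the $|P|$-versus-$|P_{thick}|$ bookkeeping. You actually supply more detail than the paper does---in particular the verification that the planks produced by Lemma~\ref{lemmathicken} genuinely belong to $\PP_S$ rather than just $\PP$, which the paper leaves implicit.
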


\begin{proof}   
By Lemma \ref{lemmathicken},  for a typical thick plank $P_{thick, S} \subset \PP_{thick, S}$,  we have

$$ | U(\PP_S) \cap P_{thick, S} | \gtrapprox |P_{thick, S}|. $$

So after defining an appropriate shading on each $P_{thick, S}$,  we have

$$ |U(\PP_S) | \gtrapprox |U(\PP_{thick,S})|. $$

By Lemma \ref{lemmaredslab},  $\mu(\PP) \approx \mu(\PP_S)$ and so (after some pigeonholing), 

$$ |U(\PP)| \approx \sum_S |U(\PP_S)| \approx \sum_S |U(\PP_{thick,S})|. $$

Next we can control $\mu(\PP_S)$ by relating it to $|U(\PP_S)|$.   We know that $\mu(\PP_S) = \frac{ |P|  |\PP_S|}{|U(\PP_S)|}$ and $\mu(\PP_{thick,S}) = \frac{ |P_{thick}| |\PP_{thick, S}|}{|U(\PP_{thick,S})|}$.   We have

$$ |\PP_{thick, S}| \approx \frac{1}{N} |\PP_S|, $$

$$ |P_{thick}| = \frac{ \theta b}{a} |P|.  $$

Plugging in gives 

$$  \mu(\PP_S)  \approx  \frac{aN }{b \theta} \mu(\PP_{thick, S}). $$

Finally,  by Lemma \ref{lemmaredslab},  $\mu(\PP) \approx \mu(\PP_S)$.  

 \end{proof}

By a linear change of variables, we can convert $\PP_{thick, S}$ from a set of $\theta b \times b \times 1$ planks in $S$ into a set $\TT$ of $b \times b \times 1$ tubes in $B_1$.   To see this more carefully,  recall that $\PP_{thick, S}$ consists of $\theta b \times b \times 1$ planks $P$ in the slab $S$ so that $TP$ is tangent to $TS$.   The slab $S$ has dimensions $\theta \times 1 \times 1$.   Let $L: S \rightarrow B_1$ be a linear change of variables.   Because of the tangency condition on $P$,  $L(P)$ has dimensions $b \times b \times 1$.  

To summarize,  we have reduced the original incidence problem for $\PP$ to an incidence problem for $\PP_{thick, S}$,  which is equivalent to an incidence problem for tubes $\TT$.   If we assume $K_{KT}(\beta)$ or $K_F(\beta)$, then we can get bounds for $\mu(\TT)$ and hence bounds for $\mu(\PP)$.   The final bound for $\mu(\PP)$ depends on what geometric information we have about $\PP$.   Several different scenarios will occur in the course of the paper.

\begin{lemma} \label{plankKT1}  Suppose that $K_{KT}(\beta)$ holds.   If $\PP$ is a set of $a \times b \times 1$ planks in $B_1$ so that

\begin{itemize}

\item  $ \Delta_{max}(\PP) \lessapprox 1, $

\item The planks $\PP$ have a shading with $\lambda(\PP) \gtrapprox 1$.

\end{itemize}

then $ \mu( \PP) \lessapprox |\PP|^\beta.$

\end{lemma}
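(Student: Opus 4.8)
The plan is to reduce the plank incidence problem for $\PP$ to a tube incidence problem, using the slab/plank machinery of Section \ref{secslabplank}, and then feed the resulting tube estimate into the generalized Katz--Tao bound, Lemma \ref{genKKT}. First, if $\mu(\PP) \lessapprox 1$ there is nothing to prove, since $|\PP|^\beta \ge 1$; so I may assume $\mu(\PP) \gg 1$, in particular $\mu(\PP) \ge 2$, so that $\PP$ has a typical intersection angle $\theta \in [a/b, 1]$. By Lemma \ref{lemmaredslab} I choose a $\theta \times 1 \times 1$ slab $S$ with $\mu(\PP) \approx \mu(\PP_S)$, and after pigeonholing I pass to the thickened planks $\PP_{thick,S}$ of dimensions $\theta b \times b \times 1$, each the union of $\sim N$ essentially distinct planks of $\PP_S$. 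By Lemma \ref{lemmaredplanktube} (using $\lambda(\PP) \gtrapprox 1$ to supply an appropriate shading on $\PP_{thick,S}$),
\[ \mu(\PP) \approx \mu(\PP_S) \approx \frac{aN}{b\theta}\,\mu(\PP_{thick,S}), \qquad |\PP_{thick,S}| \approx \frac{1}{N}\,|\PP_S|, \qquad |P_{thick}| = \frac{\theta b}{a}\,|P|. \]
Applying a linear change of variables $L\colon S \to B_1$ converts $\PP_{thick,S}$ into a set $\TT = L(\PP_{thick,S})$ of $b \times b \times 1$ tubes in $B_1$, with the shading transported so that $\lambda(\TT) \gtrapprox 1$; since $L$ scales all volumes by the same factor, $\mu(\TT) = \mu(\PP_{thick,S})$ and $|\TT| = |\PP_{thick,S}| \approx |\PP_S|/N$.

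The remaining point is to control $\Delta_{max}(\TT)$, which a priori need not be $\lessapprox 1$. Because $L$ multiplies every volume by the same factor and maps convex subsets of $S$ bijectively onto convex subsets of $B_1$, we have $\Delta_{max}(\TT) = \Delta_{max}(\PP_{thick,S})$. For any convex $K$, every thick plank $P_{thick} \subset K$ is the union of $\sim N$ planks of $\PP_S$ that also lie in $K$, and distinct thick planks use essentially disjoint families of planks, so $\#\{P_{thick} \subset K\} \lessapprox |\PP_S[K]|/N$; combined with $|P_{thick}| = \tfrac{\theta b}{a}|P|$ this gives $\Delta(\PP_{thick,S}, K) \lessapprox \tfrac{\theta b}{aN}\,\Delta(\PP_S, K)$, hence
\[ \Delta_{max}(\TT) = \Delta_{max}(\PP_{thick,S}) \lessapprox \frac{\theta b}{aN}\,\Delta_{max}(\PP_S) \le \frac{\theta b}{aN}\,\Delta_{max}(\PP) \lessapprox \frac{\theta b}{aN}. \]

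Now I apply Lemma \ref{genKKT} to $\TT$ (valid since $K_{KT}(\beta)$ is assumed), which gives
\[ \mu(\TT) \lessapprox \Delta_{max}(\TT)^{1-\beta}\,|\TT|^\beta \lessapprox \left(\frac{\theta b}{aN}\right)^{1-\beta}\left(\frac{|\PP_S|}{N}\right)^{\beta}. \]
Substituting into the relation for $\mu(\PP)$ above,
\[ \mu(\PP) \lessapprox \frac{aN}{b\theta}\left(\frac{\theta b}{aN}\right)^{1-\beta}\left(\frac{|\PP_S|}{N}\right)^{\beta} = \left(\frac{aN}{b\theta}\right)^{\beta}\left(\frac{|\PP_S|}{N}\right)^{\beta} = \left(\frac{a\,|\PP_S|}{b\theta}\right)^{\beta} \le |\PP|^{\beta}, \]
where the last step uses $\theta \ge a/b$ (so $\tfrac{a}{b\theta} \le 1$) together with $|\PP_S| \le |\PP|$.

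I expect the main obstacle to be the bookkeeping around $\Delta_{max}(\TT)$: the reduced tube family is genuinely allowed to be much denser than Katz--Tao, so one cannot invoke $K_{KT}(\beta)$ directly and must route through Lemma \ref{genKKT}; the reason the estimate still closes is precisely that the typical intersection angle satisfies $\theta \ge a/b$, which is exactly the slack needed to absorb the prefactor $aN/(b\theta)$. The only other delicate points --- producing the shading on $\PP_{thick,S}$ with $\lambda \gtrapprox 1$ (which ultimately rests on Lemma \ref{lemmathicken}), and the pigeonholing that makes $N$ well defined --- are the routine shading and parameter bookkeeping suppressed throughout, as discussed in Section \ref{secleftout}.
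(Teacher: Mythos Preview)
Your proposal is correct and follows essentially the same route as the paper: reduce to $\PP_{thick,S}$ via Lemmas \ref{lemmaredslab} and \ref{lemmaredplanktube}, linearly change variables to obtain $b$-tubes $\TT$, bound $\Delta_{max}(\TT) \lessapprox \tfrac{\theta b}{aN}$ (the paper obtains this via $\Delta_{max}(\PP_{thick}) \le \tfrac{\theta b}{aN}\Delta_{max}(\PP)$, which is the same counting you do), apply Lemma \ref{genKKT}, and close with $a \le b\theta$ and $|\PP_S| \le |\PP|$. The algebra and the key observation about the role of $\theta \ge a/b$ are identical.
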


\begin{proof} Suppose $\theta$ is a typical intersection angle for $\PP$.  By Lemma \ref{lemmaredplanktube} we have

\begin{equation} \label{eqPvsP'S} \mu(\PP) \approx  \frac{aN }{b \theta} \mu(\PP_{thick, S}). \end{equation}

By a linear change of variables,  we can convert $\PP_{thick, S}$ to a set of $b$-tubes $\TT$ in $B_1$.   Since convex sets are invariant with respect to linear change of variables,  we have 

$$\Delta_{max}(\TT) = \Delta_{max}(\PP_{thick, S}) \le \Delta_{max}(\PP_{thick}) \le \frac{b \theta}{a N} \Delta_{max}(\PP) \lessapprox \frac{b \theta}{a N}. $$

Now by Lemma \ref{genKKT},  we have

$$ \mu(\PP_{thick, S}) = \mu(\TT) \lessapprox \Delta_{max}(\TT)^{1 - \beta} |\TT|^\beta \lessapprox \left(  \frac{b \theta}{a N} \right)^{1 - \beta} ( | \PP_{thick, S}|)^\beta \lesssim \left(  \frac{b \theta}{a N} \right)^{1 - \beta} (N^{-1}  | \PP_S|)^\beta $$

And so plugging in (\ref{eqPvsP'S}),  we get

\begin{equation} \label{eqnearendplankKT1} \mu(\PP) \approx  \frac{aN }{b \theta} \mu(\PP_{thick, S}) \lessapprox  \left(  \frac{b \theta}{a N} \right)^{- \beta} (N^{-1} | \PP[S]|)^\beta =\left( \frac{a}{b \theta} \right)^\beta | \PP_S|^\beta.
\end{equation}

We know $a \le b \theta$ and $| \PP_S| \le | \PP|$,  and so 

$$ \mu(\PP) \lessapprox |\PP|^\beta.$$

\end{proof}

Remark.  If we examine the equality case in the previous argument,  we see that equality occurs only if $a = b \theta$ and $|\PP_S| = |\PP|$.   This means that $\theta = a/b$,  and that all the planks of $\PP$ lie in a single slab $S$ of thickness $\theta = a/b$.  If we add an assumption that the planks don't cluster into slabs,  then we get a stronger bound in the case $a \ll b$.

\begin{lemma} \label{plankKT2}  Suppose that $K_{KT}(\beta)$ holds.   If $\PP$ is a set of $a \times b \times 1$ planks in $B_1$ so that

\begin{itemize}

\item  $ \Delta_{max}(\PP) \lessapprox 1, $

\item For any $\theta \times 1 \times 1$ plank $S$,  $| \PP[S] | \lessapprox \theta | \PP|$, 

\item The planks $\PP$ have a shading with $\lambda(\PP) \gtrapprox 1$.

\end{itemize}

then

$$ \mu( \PP) \lessapprox \left( \frac{a}{b} \right)^\beta |\PP|^\beta.$$

\end{lemma}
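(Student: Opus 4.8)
The plan is to run the argument of Lemma~\ref{plankKT1} essentially unchanged and to spend the new non-clustering hypothesis only at the very last step, where it supplies an extra factor $\theta^{\beta}$ that upgrades the bound. Recall the structure of that proof: as in Lemma~\ref{plankKT1} we may assume $\mu(\PP) \ge 2$, so there is a typical intersection angle $\theta \in [a/b, 1]$ for $\PP$, and we may pick a $\theta \times 1 \times 1$ slab $S$ with $\mu(\PP) \approx \mu(\PP_S)$; one thickens $\PP_S$ to $\PP_{thick,S}$, applies Lemmas~\ref{lemmaredslab}, \ref{lemmathicken} and \ref{lemmaredplanktube}, converts $\PP_{thick,S}$ by a linear change of variables into a set of $b$-tubes $\TT$, and invokes Lemma~\ref{genKKT}. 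The output is exactly the estimate recorded in~(\ref{eqnearendplankKT1}),
$$ \mu(\PP) \lessapprox \left( \frac{a}{b\theta} \right)^{\beta} |\PP_S|^{\beta}. $$
In Lemma~\ref{plankKT1} the proof then finished using $a \le b\theta$ and $|\PP_S| \le |\PP|$; here we keep the first inequality but replace the second.

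The only new input is the observation that $\PP_S \subset \PP[S]$. Indeed, by definition $\PP_S = \{P \in \PP : P \subset S,\ \mathrm{angle}(TP, TS) \lesssim \theta\}$, so every plank of $\PP_S$ is in particular contained in the $\theta \times 1 \times 1$ slab $S$, i.e. belongs to $\PP[S]$. Hence the second hypothesis of the lemma, applied to this value of $\theta$ and this slab $S$, gives $|\PP_S| \le |\PP[S]| \lessapprox \theta |\PP|$. Substituting into the displayed bound yields
$$ \mu(\PP) \lessapprox \left( \frac{a}{b\theta} \right)^{\beta} (\theta |\PP|)^{\beta} = \left( \frac{a}{b} \right)^{\beta} |\PP|^{\beta}, $$
which is precisely the asserted estimate.

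Since this is just a sharpening of Lemma~\ref{plankKT1}, there is no substantial obstacle; the work is entirely bookkeeping. The one thing worth checking is that the steps feeding into~(\ref{eqnearendplankKT1}) -- namely Lemmas~\ref{lemmaredslab}, \ref{lemmathicken}, \ref{lemmaredplanktube}, and the application of Lemma~\ref{genKKT} -- rely only on $\Delta_{max}(\PP) \lessapprox 1$ and the shading hypothesis, both of which are retained, so that the new clustering hypothesis plays no role until the final line. One should also confirm that the clustering hypothesis is being applied to a superset of $\PP_S$ rather than to the thickened family $\PP_{thick,S}$; but this is immediate from $\PP_S \subset \PP[S]$ at the level of the original planks. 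As in Lemma~\ref{plankKT1}, the pigeonholing losses hidden in $\approx$ and $\lessapprox$ are the same as before, so no additional care is required there.
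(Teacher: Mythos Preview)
Your proof is correct and follows exactly the same approach as the paper: run the proof of Lemma~\ref{plankKT1} up to (\ref{eqnearendplankKT1}), then use $|\PP_S|\le|\PP[S]|\lessapprox\theta|\PP|$ from the second bullet to absorb the $\theta^{-\beta}$ and finish. The paper's version is just more terse.
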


\begin{proof} We follow the proof of Lemma \ref{plankKT1} up to (\ref{eqnearendplankKT1}),  so we have

$$ \mu(\PP) \lessapprox \left( \frac{a}{b \theta} \right)^\beta | \PP_S|^\beta.$$

Using the second bullet point,  we have $|\PP_S| \le |\PP[S]| \lesssim \theta |\PP|$.   Plugging in,  we get

$$ \mu(\PP) \lessapprox  \left( \frac{a}{b} \right)^\beta | \PP|^\beta.$$

\end{proof}

There is a similar lemma in the Frostman case.

\begin{lemma} \label{plankF}  Suppose that $K_{F}(\beta)$ holds.   Suppose $\PP$ is a set of $a \times b \times 1$ planks in $B_1$ with $C_F(\PP, B_1) \lessapprox 1$.  

Suppose that $\theta$ is a typical intersection angle for $\PP$.   Let $\PP_{thick}$ be thickened planks with dimensions $\theta b \times b \times 1$,  and suppose that each $P_{thick} \in \PP_{thick}$ contains $\sim N(\theta)$ planks of $\PP$.  Then

$$ |U(\PP)| \gtrapprox b^{2 \beta}  \left( |P_{thick}| |\PP_{thick}| \right)^{ \beta}. $$

Equivalently,

$$ \mu( \PP) \lessapprox   \left( \frac{N(\theta)}{\theta} \right)^\beta \frac{a}{b} b^{-2 \beta} \left( b^2 |\PP| \right)^{1 - \beta}. $$



\end{lemma}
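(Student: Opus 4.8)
The plan is to follow the reduction of this section: use the typical angle $\theta$ and Lemma \ref{lemmaredplanktube} to pass from $\PP$ to the thickened planks $\PP_{thick,S}$, convert each $\PP_{thick,S}$ to a set of $b\times b\times 1$ tubes by a linear change of variables, apply $K_F(\beta)$ to those tubes, and then reassemble over the slabs $S$, using $C_F(\PP,B_1)\lessapprox 1$ to bound how many slabs occur.

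First I would set up the reduction. Since $\theta$ is a typical intersection angle, Lemma \ref{lemmaredplanktube} gives $|U(\PP)|\approx\sum_S|U(\PP_{thick,S})|$, the sum being over essentially distinct $\theta\times 1\times 1$ slabs $S$; after the pigeonholing in that lemma every $P_{thick}\in\PP_{thick}$ contains $\sim N=N(\theta)$ planks of $\PP$, so $|\PP_{thick}|\approx N^{-1}|\PP|$ and $|\PP_{thick,S}|\approx N^{-1}|\PP_S|$. After a further pigeonhole I may assume $|\PP_{thick,S}|\approx m'$ is roughly constant over the $\gtrapprox|\PP_{thick}|$ thick planks that remain, and I write $M$ for the number of slabs that occur, so $Mm'\approx|\PP_{thick}|$. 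For each such $S$, let $L_S\colon S\to B_1$ be a linear change of variables. Because each $P\in\PP_{thick,S}$ has $TP$ tangent to $TS$, $L_S$ carries $\PP_{thick,S}$ to a set $\TT_S$ of $b\times b\times 1$ tubes in $B_1$, with $|\TT_S|=|\PP_{thick,S}|\approx m'$, carrying the shading to one of comparable $\lambda$, and with $|U(\PP_{thick,S})|=\theta\,|U(\TT_S)|$ since $|\det L_S|=\theta^{-1}$.

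The key point — and the step I expect to be the main obstacle — is that each $\TT_S$ is convex Frostman in $B_1$. As $\Delta$ is invariant under the linear map $L_S$, this is the same as $C_F(\PP_{thick,S},S)\lessapprox 1$. The underlying mechanism is that clustering of thick planks forces clustering of thin planks: for any convex $K$, a thick plank contained in $K$ contains $\sim N$ essentially distinct planks of $\PP$, all contained in $K$, so $N\,|\PP_{thick}[K]|\lessapprox|\PP[K]|$, and since $|P_{thick}|/|P|=\theta b/a$ we get
\[
\Delta(\PP_{thick},K)\;\lessapprox\;\frac{\theta b}{aN}\,\Delta(\PP,K)\;\lessapprox\;\frac{\theta b}{aN}\,\Delta(\PP,B_1)\;=\;\Delta(\PP_{thick},B_1),
\]
using $C_F(\PP,B_1)\lessapprox 1$; so $\PP_{thick}$ as a whole is convex Frostman in $B_1$. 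Upgrading this to the per-slab statement $C_F(\PP_{thick,S},S)\lessapprox 1$ for each remaining slab is the delicate point: one must rule out that an individual $\PP_{thick,S}$ concentrates in a proper convex subset of its own slab $S$, and this is exactly the kind of bookkeeping that is otherwise suppressed in this outline — it is arranged by the pigeonholing above, at the cost of passing to a subcollection of comparable size.

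Granting this, $K_F(\beta)$ applied to $\TT_S$ gives $|U(\TT_S)|\gtrapprox b^{2\beta}\bigl(b^2|\TT_S|\bigr)^\beta$, hence $|U(\PP_{thick,S})|=\theta|U(\TT_S)|\gtrapprox\theta\,b^{2\beta}\bigl(b^2m'\bigr)^\beta$. To finish I would sum over the $M$ slabs. Since $C_F(\PP,B_1)\lessapprox 1$, every $\theta\times 1\times 1$ slab $S$ has $|\PP[S]|\lessapprox\theta|\PP|$, so $m'\approx N^{-1}|\PP_S|\lessapprox N^{-1}\theta|\PP|\approx\theta|\PP_{thick}|$, whence $M\approx|\PP_{thick}|/m'\gtrapprox\theta^{-1}$. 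Therefore
\[
|U(\PP)|\approx\sum_S|U(\PP_{thick,S})|\gtrapprox M\theta\,b^{2\beta}\bigl(b^2m'\bigr)^\beta\approx\theta M^{1-\beta}\,b^{2\beta}\bigl(b^2|\PP_{thick}|\bigr)^\beta\gtrapprox\theta^\beta\,b^{2\beta}\bigl(b^2|\PP_{thick}|\bigr)^\beta,
\]
where I used $\theta M^{1-\beta}\gtrapprox\theta(\theta^{-1})^{1-\beta}=\theta^\beta$. Since $|P_{thick}|=\theta b^2$, the right side equals $b^{2\beta}\bigl(|P_{thick}||\PP_{thick}|\bigr)^\beta$, which is the asserted bound; the equivalent form for $\mu(\PP)$ then follows by unwinding $\mu(\PP)=|P||\PP|/|U(\PP)|$ with $|P|=ab$, $|P_{thick}|=\theta b^2$, and $|\PP_{thick}|\approx|\PP|/N$.
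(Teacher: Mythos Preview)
Your reduction and final arithmetic are correct and match the paper, but there is a genuine gap at exactly the step you flag as ``the main obstacle.'' You assert that pigeonholing arranges $C_F(\PP_{thick,S},S)\lessapprox 1$ for the surviving slabs, but this is false in general and cannot be fixed by passing to a subcollection. The only upper bound available is $\Delta(\PP_{thick,S},K)\le\Delta(\PP_{thick},K)\lessapprox\Delta(\PP_{thick},B_1)=|P_{thick}||\PP_{thick}|$, while $\Delta(\PP_{thick,S},S)=|P_{thick}|m'/\theta$, so $C_F(\PP_{thick,S},S)\lessapprox \theta|\PP_{thick}|/m'$. You only know $m'\lessapprox\theta|\PP_{thick}|$, the wrong direction. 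Concretely, if each slab contains a single thick plank (so $m'=1$, $M\approx|\PP_{thick}|$), then after the linear map each $\TT_S$ is a single $b$-tube in $B_1$, which has $C_F(\TT_S,B_1)\approx b^{-2}\gg 1$; $K_F(\beta)$ does not apply, and indeed the inequality $|U(\TT_S)|\gtrapprox b^{2\beta}(b^2|\TT_S|)^\beta=b^{4\beta}$ is false for small $\beta$. Discarding slabs cannot help here, since no slab is good; discarding planks within a slab only makes $C_F$ worse.

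The paper's proof repairs exactly this point by a different mechanism: rather than hoping $\PP_{thick,S}$ is Frostman, it \emph{enlarges} $\PP_{thick,S}$ to a set $\tilde\PP_{thick,S}$ of cardinality $\approx\theta|\PP_{thick}|$ by taking the union of several random translates of $\PP_{thick,S}$ inside $S$. The density upper bound $\Delta(\PP_{thick,S},K)\lessapprox|P_{thick}||\PP_{thick}|$ is translation-invariant, so $\tilde\PP_{thick,S}$ inherits it, and now the denominator $\Delta(\tilde\PP_{thick,S},S)\approx|P_{thick}||\PP_{thick}|$ matches, forcing $C_F(\tilde\PP_{thick,S},S)\lessapprox 1$. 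Since $|U(\tilde\PP_{thick,S})|\approx(\theta|\PP_{thick}|/m')\,|U(\PP_{thick,S})|\approx\theta|\SSS|\,|U(\PP_{thick,S})|$, one gets $|U(\PP)|\approx|\SSS|\,|U(\PP_{thick,S})|\approx\theta^{-1}|U(\tilde\PP_{thick,S})|$, and a single application of $K_F(\beta)$ to the rescaled $\tilde\TT$ (with $|\tilde\TT|\approx\theta|\PP_{thick}|$) yields the stated bound directly, with no need to sum over slabs.
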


\begin{proof} Since $\theta$ is a typical angle of intersection for $\PP$.   Lemma \ref{lemmaredslab} and Lemma \ref{lemmaredplanktube} tell us that, after some pigeonholing,  we can find a set $\SSS$ of $\theta \times 1 \times 1$ slabs so that

$$ |U(\PP) | \approx |\SSS| |U(\PP_S)| \approx | \SSS | |U(\PP_{thick, S})|. $$


We assumed that $\PP$ is Frostman in $B_1$.  This does not imply that $\PP_{thick, S}$ is Frostman in $S$.  So we have to work out what we know about $\PP_{thick, S}$.  Let $\PP_{thick}$ be formed by thickening the planks of $\PP$ to dimensions $\theta b \times b \times 1$.  By assumption (after pigeonholing) each plank $P_{thick} \in \PP_{thick}$ contains $\approx N$ planks of $\PP$.  

Since $\PP$ is Frostman in $B_1$,  $\PP_{thick}$ is also Frostman in $B_1$,  and so 

$$|\PP_{thick, S}| \le |\PP_{thick}[S]| \lessapprox \theta |\PP_{thick}|. $$

 The set $\PP_{thick, S}$ is not necessarily Frostman, but since $\PP_{thick}$ is Frostman,  we do know that for any convex $K \subset S$,

\begin{equation} \label{eqdensPthick}
 \Delta(\PP_{thick, S}, K) \le \Delta(\PP_{thick}, K) \lessapprox \Delta(\PP_{thick}, B_1) \sim |P| |\PP_{thick}|. 
 \end{equation}

If $|\PP_{thick, S}| \approx \theta |\PP_{thick}|$,  then we would have

$$ \Delta(\PP_{thick, S}, K) \lessapprox |P| |\PP_{thick}| \approx \frac{ |P| |\PP_{thick, S}|}{\theta} =  \frac{ |P| |\PP_{thick, S}|}{|S|} = \Delta(\PP_{thick, S}, S). $$

So if $\PP_{thick, S} \approx \theta | \PP_{thick} |$,  then $\PP_{thick,  S}$ is Frostman.   

In general,   we enlarge $\PP_{thick, S}$ to a set $\tilde \PP_{thick,S}$ with $|\tilde \PP_{thick,S}| \approx \theta |\PP_{thick}|$ by taking the union of several random translations of $\PP_{thick, S}$ within $S$.   There are now two cases.  In the main case,  a given plank would appear in only $\sim 1$ random translation of $\PP_{thick, S}$.   There is also a degenerate case where the union of the random translations includes a typical thick plank in $S$ many times.    The degenerate case is straightforward,  and we omit the details.

 In the main case, (\ref{eqdensPthick}) guarantees that $\tilde \PP_{thick, S}$ is Frostman and so all together we have
 
 \begin{itemize}
 
\item $|U(\tilde \PP_{thick,  S})| \approx \frac{ \theta | \PP_{thick}| }{ |\PP_{thick, S}|} |U(\PP_{thick, S})| \approx \theta |\SSS| |U(\PP_{thick, S})|.$

\item $C_F(\tilde \PP_{thick, S}, S) \lessapprox 1$.

\item  $|\tilde \PP_{thick, S}| \approx \theta |\PP_{thick}|$.
 
  \end{itemize}

Therefore,  we have

$$ |U(\PP)| \approx |\SSS| |U(\PP_{thick, S})| \approx \theta^{-1} |U(\tilde \PP_{thick, S})| = \frac{ |U(\tilde \PP_{thick, S})|}{|S|}. $$

Now by a linear change of variables,  we can convert $\tilde \PP_{thick,S}$ to a set of $b$-tubes $\tilde \TT$ in $B_1$.   Since convex sets are invariant with respect to linear change of variables,  $\tilde \TT$ is also Frostman.    And we have

$$ |U(\PP)| \approx  \frac{ |U(\tilde \PP_{thick, S})|}{|S|} \sim |U(\tilde \TT)|. $$

By definition of $K_F(\beta)$,  we have

$$ |U(\tilde \TT)| \gtrapprox b^{2\beta} \left( b^2 |\tilde \TT| \right)^\beta \approx b^{2 \beta} \left( b^2 \theta |\PP_{thick} | \right)^\beta = b^{2 \beta}  \left( |P_{thick}| |\PP_{thick}| \right)^{ \beta}.  $$

This gives the desired lower bound for $|U(\PP)|$. 

Next,  to get an upper bound for $\mu(\PP)$,  we relate $\mu(\PP)$ to $|U(\PP)|$.   This part is just algebra and we leave the details to the reader.

\end{proof}

\section{Locating sticky Kakeya sets} \label{seclocatesticky}

Our main goal is to reduce the general case of the Kakeya conjecture to the sticky case.   In order to do that,  we need to recognize when a set of tubes $\TT$ contains a sticky Kakeya set and when is a subset of a sticky Kakeya set.  

The proof is given by a multiscale random selection.   First we need a simple multiscale lemma about $\Delta_{max}$.

\begin{lemma} \label{lemmasubmultD}  Suppose that $\TT$ is uniform.   Suppose $\eps > 0$.   Suppose that $\delta = \rho_M \le \rho_{M-1} \le ... \le \rho_1 \le \rho_0 = 1$ is a sequence of scales.    Then

$$ \Delta_{max}(\TT) \lessapprox \prod_{m=1}^M \Delta_{max}( \TT_{\rho_m} [T_{\rho_{m-1}}] ). $$

\end{lemma}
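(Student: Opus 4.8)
The plan is to prove the inequality by induction on $M$, where the inductive step is really just a single-scale submultiplicativity statement: for any convex $K$ and any intermediate scale $\rho$, the number of tubes of $\TT$ inside $K$ is controlled by (density of $\rho$-tubes inside $K$) times (density of $\delta$-tubes inside a typical $\rho$-tube). More precisely, I would first establish the following one-step claim. Fix a scale $\rho$ with $\delta \le \rho \le 1$ and a convex set $K$. Using uniformity, write $\TT = \bigsqcup_{T_\rho \in \TT_\rho} \TT[T_\rho]$ with $|\TT[T_\rho]|$ roughly constant; since the $\TT_\rho$ are essentially distinct, each $T \in \TT[K]$ lies in $\sim 1$ tube $T_\rho$, and that $T_\rho$ is contained in (a bounded dilate of) $K$ provided $T \subset K$. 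Hence
\begin{equation}
\sum_{T \in \TT[K]} |T| \lessapprox \sum_{T_\rho \in \TT_\rho[CK]} \ \sum_{T \in \TT[T_\rho]} |T|
= \sum_{T_\rho \in \TT_\rho[CK]} \Delta(\TT, T_\rho)\,|T_\rho|,
\end{equation}
where I am abusing notation slightly: $\Delta(\TT,T_\rho)$ here means $\sum_{T \in \TT[T_\rho]}|T| / |T_\rho|$, which by uniformity (constancy of $|\TT[T_\rho]|$ and $|T|$) is $\approx \Delta_{max}(\TT_\delta[T_\rho'])$ for any $T_\rho'$, i.e. bounded by $\Delta_{max}(\TT_{\rho}[T_{\rho'}])$ uniformly. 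Pulling that factor out and dividing by $|K|$ gives $\Delta(\TT,K) \lessapprox \Delta_{max}(\TT_{\rho'}[T_{\rho''}])\cdot\Delta(\TT_\rho, CK)$, and taking the max over $K$ yields
\begin{equation}
\Delta_{max}(\TT) \lessapprox \Delta_{max}(\TT_\rho[T_{\rho'}])\cdot \Delta_{max}(\TT_\rho).
\end{equation}

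With this one-step bound in hand, I would iterate down the chain $1 = \rho_0 \ge \rho_1 \ge \dots \ge \rho_M = \delta$. Applying the one-step inequality with the pair of scales $(\rho_0,\rho_1)$ gives $\Delta_{max}(\TT) \lessapprox \Delta_{max}(\TT_{\rho_1}[T_{\rho_0}]) \cdot \Delta_{max}(\TT_{\rho_1})$. Now $\TT_{\rho_1}$ is itself a uniform set of $\rho_1$-tubes (its multiscale structure at scales $\rho_m$, $m \ge 1$, is inherited from that of $\TT$), so I can re-apply the one-step bound to $\Delta_{max}(\TT_{\rho_1})$ with the pair $(\rho_1,\rho_2)$, and so on. After $M$ steps the telescoping product is exactly $\prod_{m=1}^M \Delta_{max}(\TT_{\rho_m}[T_{\rho_{m-1}}])$, where I have used that $\Delta_{max}(\TT_{\rho_M}) = \Delta_{max}(\TT_\delta) = \Delta_{max}(\TT)$-of-singletons-contribution is $\approx 1$ and gets absorbed (indeed $\TT_{\rho_{M}}[T_{\rho_{M-1}}]$ already is the last factor). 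The accumulated $\lessapprox$-losses are a bounded power of $\delta^\eps$ coming from the $M \le \eps^{-1}$ steps and the "$\approx 1$" slack in uniformity, which is acceptable.

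The only genuinely delicate point — the step I would be most careful about — is the claim that if $T \subset K$ then the associated $\rho$-tube $T_\rho \supset T$ is contained in a bounded dilate $CK$ of $K$, and conversely that each $T_\rho \in \TT_\rho[CK]$ contributes only its own tubes $\TT[T_\rho]$ once. The containment direction is fine up to a harmless dilation of $K$ (a $\rho$-tube through a $\delta$-tube lying in a convex set is within $O(\rho)$ of that set, hence in $C K$ after enlarging $K$ by a bounded factor; since we take $\Delta_{max}$ over all convex sets this costs nothing). The subtlety is double-counting: a single $T \in \TT$ could a priori lie in several essentially-distinct $T_\rho$, but the uniformity definition explicitly states each $T$ lies in $\sim 1$ such $T_\rho$, so the sum over $T_\rho$ of $\sum_{T \in \TT[T_\rho]}|T|$ overcounts $\sum_{T \in \TT[K]}|T|$ by at most a constant. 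Finally I would remark that uniformity is genuinely needed (to pull the factor $\Delta(\TT,T_\rho)$ out of the sum as a single number), and that without it one would only get an inequality with $\max_{T_\rho}$ inside, which is strictly weaker.
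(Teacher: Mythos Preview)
Your inductive strategy is sound in principle, but the argument for the one-step bound has a real gap. The problematic claim is that if $T \subset K$ then the enclosing $\rho$-tube $T_\rho$ lies in a bounded dilate $CK$. This is false whenever $K$ has a dimension much smaller than $\rho$: for instance, if $K$ is a $\delta \times \rho \times 1$ plank, then any dilate $CK$ has thin dimension $C\delta$, so no $\rho$-tube is contained in $CK$ for bounded $C$ (and $\TT_\rho[CK]$ may even be empty, making your displayed inequality vacuously false). Your attempted fix, ``since we take $\Delta_{max}$ over all convex sets this costs nothing,'' does not work: the issue is not bounding $\Delta(\TT_\rho,\,\cdot\,)$ by $\Delta_{max}(\TT_\rho)$, but rather that after pulling out the $\rho$-tube factor your numerator involves $\sum_{T_\rho \in \TT_\rho[K_\rho]} |T_\rho| = \Delta(\TT_\rho, K_\rho)\,|K_\rho|$ while you still divide by $|K|$, so a hidden factor $|K_\rho|/|K|$ appears and can be as large as $(\rho/\delta)^2$.

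The paper's proof avoids this by not discarding the information that the $\delta$-tubes lie in $K$: instead of bounding $|\TT[T_\rho]|$, it bounds $|\TT_{\rho_m}[T_{\rho_{m-1}} \cap K_m]|$ where $K_m$ is the $\rho_m$-neighborhood of $K$. The intersection $T_{\rho_{m-1}} \cap K_m$ may be much smaller than $T_{\rho_{m-1}}$, and this is precisely what compensates for the growth of $|K_m|$. The key telescoping identity $\frac{|K_m \cap T_{\rho_{m-1}}|}{|T_{\rho_{m-1}}|} \approx \frac{|K_m|}{|K_{m-1}|}$ then collapses the product of volume ratios down to $|K|/|T|$. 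Your one-step bound $\Delta_{max}(\TT) \lessapprox \Delta_{max}(\TT[T_\rho])\,\Delta_{max}(\TT_\rho)$ is in fact true, but proving it correctly already requires this intersection-and-telescope idea (specialized to two scales), which is the main content of the lemma.
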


\begin{proof}  Suppose that $K \subset B_1$ is a convex set.   We have to estimate $\Delta(\TT, K) = \frac{ |T| | \TT[K] | }{|K|}$.

Let $K_m$ denote the $\rho_m$-neighborhood of $K$.   For a tube $T \in \TT$,  let $T_{\rho_m}$ be the corresponding thickened tube in $\TT_{\rho_m}$,  and notice that if $T \subset K$ then $T_{\rho_m} \subset K_m$.    So if $T \subset K$,  then $T_{\rho_1} \subset K_1$,  and $T_{\rho_2} \subset T_{\rho_1} \cap K_2$ and in general $T_{\rho_m} \subset T_{\rho_{m-1}} \cap K_m$.   So 

\begin{equation} \label{eqtelprod}
 | \TT [K] | \lessapprox \prod_{m=1}^M | \TT_{\rho_m}[ T_{\rho_{m-1}} \cap K_m] | \lessapprox \prod_{m=1}^M \Delta_{max}( \TT_{\rho_m}[T_{\rho_{m-1}}] ) \frac{ |K_m \cap T_{\rho_{m-1}} | } {|T_{\rho_m}|}.
 \end{equation}

Now

$$  \frac{ |K_m \cap T_{\rho_{m-1}} | } {|T_{\rho_m}|} =  \frac{ |K_m \cap T_{\rho_{m-1}} | } {|T_{\rho_{m-1}}|} \cdot  \frac{ | T_{\rho_{m-1}} | } {|T_{\rho_m}|} =  \frac{ |K_m | } {|K_{m-1}|} \cdot  \frac{ | T_{\rho_{m-1}} | } {|T_{\rho_m}|}. $$

Plugging this into (\ref{eqtelprod}) and simplifying the telescoping product,  we get

$$ | \TT [K] |  \lessapprox  \left[ \prod_{m=1}^M \Delta_{max}( \TT_{\rho_m}[T_{\rho_{m-1}}] ) \right] \frac{ |K| } {|T|}.$$

\end{proof}

\begin{lemma} \label{lemmadecsticky} Suppose that $\TT$ is uniform.   Suppose $\eps > 0$.   Suppose that $\delta = \rho_M \le \rho_{M-1} \le ... \le \rho_1 \le \rho_0 = 1$ is a sequence of scales.   Suppose that for each $m$,

\begin{itemize}

\item $\frac{\rho_{m}}{\rho_{m-1}} \gtrapprox \delta^{\eps}$ for all $m$.   

\item $C_F(\TT_{\rho_m}[T_{\rho_{m-1}}]) \lessapprox \left( \frac{ \rho_{m-1} }{\rho_{m}} \right)^\eps$.

\end{itemize}

Then $\TT$ contains a subset $\TT'$ which is $\delta^{-3\eps}$-sticky and has $\Delta_{max}(\TT') \lessapprox \delta^{-3 \eps}$.

\end{lemma}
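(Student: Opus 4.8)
The plan is to construct $\TT'$ by a top-down multiscale random selection along the scales $\rho_0 > \rho_1 > \cdots > \rho_M$, keeping a controlled number of children at each transition, and then to read off both conclusions from the submultiplicativity lemma, Lemma \ref{lemmasubmultD}. The starting observation is that the Frostman hypotheses already pin the branching down from below. Write $N_m := |\TT_{\rho_m}[T_{\rho_{m-1}}]|$ for the (roughly constant) number of $\rho_m$-tubes of the uniform structure of $\TT$ lying in a typical $\rho_{m-1}$-tube. Then $\Delta(\TT_{\rho_m}[T_{\rho_{m-1}}], T_{\rho_{m-1}}) \approx N_m (\rho_m/\rho_{m-1})^2$, while testing the density against a single child tube $T_{\rho_m}$ gives $\Delta(\TT_{\rho_m}[T_{\rho_{m-1}}], T_{\rho_m}) \gtrapprox 1$, since a $\rho_m$-tube contains only $\approx 1$ essentially distinct $\rho_m$-tube. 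Hence the hypothesis $C_F(\TT_{\rho_m}[T_{\rho_{m-1}}]) \lessapprox (\rho_{m-1}/\rho_m)^\eps$ forces $N_m \gtrapprox (\rho_{m-1}/\rho_m)^{2-\eps}$. The same hypothesis also bounds $\Delta_{max}$: since every tube of $\TT_{\rho_m}[T_{\rho_{m-1}}]$ lies in $T_{\rho_{m-1}}$, one has $\Delta_{max}(\TT_{\rho_m}[T_{\rho_{m-1}}]) = \max_{K' \subseteq T_{\rho_{m-1}}} \Delta(\TT_{\rho_m}[T_{\rho_{m-1}}], K')$ (any convex set may be replaced by its intersection with $T_{\rho_{m-1}}$, which only shrinks the volume and keeps the same tubes inside), and this is $\lessapprox (\rho_{m-1}/\rho_m)^\eps \, N_m (\rho_m/\rho_{m-1})^2$.

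Now set $N'_m := \min\bigl(N_m,\ (\rho_{m-1}/\rho_m)^2\bigr)$, so that $(\rho_{m-1}/\rho_m)^{2-\eps} \lessapprox N'_m \le (\rho_{m-1}/\rho_m)^2$ by the lower bound above. Process $m = 1, \dots, M$, maintaining a set of surviving $\rho_{m-1}$-tubes (starting from $T_{\rho_0} = B_1$): for each surviving $T_{\rho_{m-1}}$, select a random subset of $\approx N'_m$ of its $\approx N_m$ children in $\TT_{\rho_m}$, and call the chosen tubes the surviving $\rho_m$-tubes. Let $\TT'$ be the set of $\delta$-tubes surviving after step $M$ (recall $\rho_M = \delta$). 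By construction $\TT'$ is uniform with branching $\approx N'_m$ at the transition $\rho_{m-1} \to \rho_m$, so $|\TT'[T_{\rho_m}]| \approx \prod_{k > m} N'_k$; since $\prod_{k=1}^M (\rho_{k-1}/\rho_k)^2 = \delta^{-2}$, the bounds on the $N'_k$ place $|\TT'[T_{\rho_m}]|$ in $\bigl[\,\delta^{2\eps}(\rho_m/\delta)^2,\ (\rho_m/\delta)^2\,\bigr]$ for every $m$. As consecutive $\rho_m$ differ by a factor $\lessapprox \delta^{-\eps}$, filling in the intermediate scales $\delta^{j\eps}$ using the uniformity of $\TT$ propagates the same two-sided bound, with $3\eps$ in place of $2\eps$, to every scale $\rho = \delta^{j\eps}$; that is, $\TT'$ is $\delta^{-3\eps}$-sticky.

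For $\Delta_{max}(\TT')$, fix a surviving $T_{\rho_{m-1}}$; then $\TT'_{\rho_m}[T_{\rho_{m-1}}]$ is a random subset of $\TT_{\rho_m}[T_{\rho_{m-1}}]$ of relative size $p = N'_m/N_m \le 1$. If $N_m \lessapprox (\rho_{m-1}/\rho_m)^2$, then $p \approx 1$ and $\Delta_{max}$ can only decrease under passing to a subset, so $\Delta_{max}(\TT'_{\rho_m}[T_{\rho_{m-1}}]) \le \Delta_{max}(\TT_{\rho_m}[T_{\rho_{m-1}}]) \lessapprox (\rho_{m-1}/\rho_m)^\eps$ by the last estimate of the first paragraph. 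If $N_m \gg (\rho_{m-1}/\rho_m)^2$, then $p = (\rho_{m-1}/\rho_m)^2/N_m$ and the standard Chernoff estimate for the density of a random subset --- exactly the argument in the proof of Lemma \ref{genKKT} --- gives $\Delta_{max}(\TT'_{\rho_m}[T_{\rho_{m-1}}]) \lessapprox \max\bigl( p\, \Delta_{max}(\TT_{\rho_m}[T_{\rho_{m-1}}]),\ 1 \bigr) \lessapprox (\rho_{m-1}/\rho_m)^\eps$, using $p \cdot (\rho_{m-1}/\rho_m)^\eps N_m (\rho_m/\rho_{m-1})^2 = (\rho_{m-1}/\rho_m)^\eps \ge 1$. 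Either way, Lemma \ref{lemmasubmultD} applied to $\TT'$ with the scale sequence $(\rho_m)_{m=0}^M$ gives
$$ \Delta_{max}(\TT') \lessapprox \prod_{m=1}^M \Delta_{max}(\TT'_{\rho_m}[T_{\rho_{m-1}}]) \lessapprox \prod_{m=1}^M (\rho_{m-1}/\rho_m)^\eps = (1/\delta)^\eps = \delta^{-\eps} \le \delta^{-3\eps}. $$

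The step I expect to be the real obstacle --- and the only one that genuinely needs randomness rather than an arbitrary choice of surviving children --- is the $\Delta_{max}$ bound in the regime $N_m \gg (\rho_{m-1}/\rho_m)^2$: deleting children arbitrarily need not decrease $\Delta_{max}$, so one must run the Chernoff estimate on $\Delta_{max}$ of a random subset uniformly over all convex sets and all surviving parents, and nested consistently down the $M$ scales so that the selections at different scales interact correctly. The rest --- the pigeonholing that keeps the per-parent branching roughly constant, the bookkeeping of the intermediate scales $\delta^{j\eps}$, and the check that the selected tree is genuinely uniform --- is of the routine kind deliberately suppressed in these notes, and is where the slack between $\eps$ and $3\eps$ is spent.
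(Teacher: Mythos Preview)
Your proposal is correct and follows essentially the same approach as the paper: top-down random selection along the scale sequence, using the Frostman hypothesis to lower-bound the branching $N_m \gtrapprox (\rho_{m-1}/\rho_m)^{2-\eps}$, randomization to control $\Delta_{max}$ at each step, and then Lemma~\ref{lemmasubmultD} to combine. Your two-case split for the $\Delta_{max}$ bound (thinning needed versus not) and your identification of the randomness step as the only nontrivial point are a bit more explicit than the paper's sketch, but the argument is the same.
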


\begin{proof} We select $\TT'$ in a random way one level of the tree at a time.   We first select $\TT'_{\rho_1} \subset \TT_{\rho_1}$ randomly with cardinality roughly $ \rho_1^{-2}$.     Then for each $T_{\rho_1} \in \TT'_{\rho_1}$,  we select $\TT'_{\rho_2}[T_{\rho_1}]$ randomly with cardinality roughly $(\rho_1/ \rho_2)^{-2}$.   Proceeding in this way defines $\TT'$.   We have to check some quantitative bounds for the sets of tubes that appear in this process.

Because of the second bullet point,  we know that for each $m$

$$ | \TT_{\rho_m}[ T_{\rho_{m-1}} ] |  \gtrapprox \left( \frac{\rho_{m-1}}{\rho_{m}} \right)^{-2 + \eps}. $$

So for each $T_{\rho_{m-1}} \in \TT'_{\rho_{m-1}}$  we can choose $\TT'_{\rho_m}[T_{\rho_{m-1}}] \subset \TT_{\rho_m} [T_{\rho_{m-1}}]$ randomly with cardinality in the range

$$  \left( \frac{\rho_{m-1}}{\rho_{m}} \right)^{-2 + \eps} \lessapprox  | \TT'_{\rho_m}[T_{\rho_{m-1}}] | \lessapprox  \left( \frac{\rho_{m-1}}{\rho_{m}} \right)^{-2 } . $$

This defines $\TT'$,  and it implies that for every $\rho_m$,  

$$ \delta^{\eps} (\rho_m /\delta)^2 \lessapprox | \TT'[T_{\rho_m}] | \lessapprox (\rho_m / \delta)^2.  $$

Now by the first bullet point,  we see that for any $\rho$ 

$$ \delta^{3 \eps} (\rho /\delta)^2 \lessapprox | \TT'[T_{\rho_m}] | \lessapprox \delta^{-3 \eps} (\rho / \delta)^2.  $$

In other words,  $\TT'$ is $\delta^{- 3 \eps}$-sticky.

By a randomization argument,  $\TT'_{\rho_m}[T_{\rho_{m-1}}] $ obeys

$$C_F(\TT'_{\rho_m}[T_{\rho_{m-1}}]) \lessapprox \left( \frac{ \rho_{m-1} }{\rho_{m}} \right)^\eps$$

Given our bound for $| \TT'_{\rho_m}[T_{\rho_{m-1}}] |$,  this implies

$$\Delta_{max} (\TT'_{\rho_m}[T_{\rho_{m-1}}]) \lessapprox \left( \frac{ \rho_{m-1} }{\rho_{m}} \right)^\eps.$$

By the submultiplicative property of $\Delta_{max}$ in Lemma \ref{lemmasubmultD}, 
we get

$$ \Delta_{max}(\TT') \lessapprox \delta^{- \eps}. $$

\end{proof}

There is an analogous lemma to recognize when $\TT$ is contained in a sticky Kakeya set.

\begin{lemma} \label{lemmasubsticky} Suppose that $\TT$ is uniform.   Suppose $\eps > 0$.   Suppose that $\delta = \rho_M \le \rho_{M-1} \le ... \le \rho_1 \le \rho_0 = 1$ is a sequence of scales.   Suppose that for each $m$,

\begin{itemize}

\item $\frac{\rho_{m}}{\rho_{m-1}} \gtrapprox \delta^{\eps}$ for all $m$.   

\item $\Delta_{max}(\TT_{\rho_m}[T_{\rho_{m-1}}]) \lessapprox \left( \frac{ \rho_{m-1} }{\rho_{m}} \right)^\eps$.

\end{itemize}

Then $\TT$ is contained in a subset $\TT'$ which is $\delta^{-3\eps}$-sticky and has $\Delta_{max}(\TT') \lessapprox \delta^{-3 \eps}$.

\end{lemma}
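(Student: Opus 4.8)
The plan is to mirror the proof of Lemma~\ref{lemmadecsticky}, replacing the random \emph{selection} of a subset at each level by a random \emph{enlargement}, and to build $\TT'$ one level of the tree at a time so that $\TT_{\rho_m}\subseteq\TT'_{\rho_m}$ at every scale; then $\TT\subseteq\TT'$ in the end.

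First I would unwind the second bullet. Rescaling the $\rho_{m-1}$-tube $T_{\rho_{m-1}}$ to the unit ball turns $\TT_{\rho_m}[T_{\rho_{m-1}}]$ into a set of $(\rho_m/\rho_{m-1})$-tubes in $B_1$, so the hypothesis $\Delta_{max}(\TT_{\rho_m}[T_{\rho_{m-1}}])\lessapprox(\rho_{m-1}/\rho_m)^\eps$ becomes
$$ |\TT_{\rho_m}[T_{\rho_{m-1}}]| \;\lessapprox\; \left(\frac{\rho_{m-1}}{\rho_m}\right)^{2+\eps}. $$
In particular the original branching factor never substantially exceeds $(\rho_{m-1}/\rho_m)^{2+\eps}$. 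So, level by level, for each $T_{\rho_{m-1}}\in\TT'_{\rho_{m-1}}$ I would \emph{enlarge} $\TT_{\rho_m}[T_{\rho_{m-1}}]$ (or, if $T_{\rho_{m-1}}$ was itself added at level $m-1$, start from scratch) by throwing in essentially distinct $\rho_m$-tubes inside $T_{\rho_{m-1}}$ chosen uniformly at random, until $\TT'_{\rho_m}[T_{\rho_{m-1}}]$ has cardinality $\sim(\rho_{m-1}/\rho_m)^{2+\eps}$; there is room for this since a $\rho_{m-1}$-tube contains $\sim(\rho_{m-1}/\rho_m)^4\gg(\rho_{m-1}/\rho_m)^{2+\eps}$ essentially distinct $\rho_m$-tubes. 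Iterating over $m=1,\dots,M$ defines $\TT'$, and $\TT\subseteq\TT'$ by construction.

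Next I would verify the two quantitative claims. For stickiness, the (now roughly constant) branching $\sim(\rho_{m-1}/\rho_m)^{2+\eps}$ telescopes to
$$ \left(\frac{\rho_m}{\delta}\right)^{2} \;\lessapprox\; |\TT'[T_{\rho_m}]| \;\lessapprox\; \left(\frac{\rho_m}{\delta}\right)^{2+\eps} \;\lessapprox\; \delta^{-\eps}\left(\frac{\rho_m}{\delta}\right)^{2}, $$
and the first bullet point (consecutive scales within a factor $\delta^{-\eps}$) upgrades this to $\delta^{3\eps}(\rho/\delta)^{2}\lessapprox|\TT'[T_\rho]|\lessapprox\delta^{-3\eps}(\rho/\delta)^{2}$ at every scale $\rho$, exactly as at the end of the proof of Lemma~\ref{lemmadecsticky}; so $\TT'$ is $\delta^{-3\eps}$-sticky. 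For the density bound, I would estimate $\Delta_{max}$ of each level set: after rescaling $T_{\rho_{m-1}}$ to $B_1$, the set $\TT'_{\rho_m}[T_{\rho_{m-1}}]$ is the union of the original set, which has $\Delta_{max}\lessapprox(\rho_{m-1}/\rho_m)^\eps$ by hypothesis, with a random collection of $\sim(\rho_{m-1}/\rho_m)^{2+\eps}$ tubes of radius $\rho_m/\rho_{m-1}$; the latter has $\Delta(\,\cdot\,,B_1)=(\rho_{m-1}/\rho_m)^\eps\ge 1$, so the randomization argument used in Lemma~\ref{lemmadecsticky} (see also Section~\ref{subsecdenssmallballs}) gives $\Delta_{max}\lessapprox(\rho_{m-1}/\rho_m)^\eps$ for the random part as well, hence
$$ \Delta_{max}(\TT'_{\rho_m}[T_{\rho_{m-1}}]) \;\lessapprox\; \left(\frac{\rho_{m-1}}{\rho_m}\right)^\eps. $$
Submultiplicativity of $\Delta_{max}$ (Lemma~\ref{lemmasubmultD}) then gives $\Delta_{max}(\TT')\lessapprox\prod_{m=1}^M(\rho_{m-1}/\rho_m)^\eps=(\rho_0/\rho_M)^\eps=\delta^{-\eps}\le\delta^{-3\eps}$.

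The main obstacle, and essentially the only step needing genuine work, is the uniform-in-$K$ density bound for the random collection of added tubes, i.e. the fact that a random set of $N$ tubes of radius $\sigma$ in $B_1$ has $\Delta_{max}\lessapprox\max(1,N\sigma^2)$; this requires a union bound over a polynomially-sized net of convex sets, so that only absorbable logarithmic losses appear. Everything else — arranging genuine uniformity of $\TT'$ at the intermediate scales $\delta^{j\eps}$, and checking that the $\lessapprox$-errors accumulated over the $\sim\eps^{-1}$ relevant levels stay inside the stated $\delta^{-3\eps}$ — is routine bookkeeping, handled just as in Lemma~\ref{lemmadecsticky}.
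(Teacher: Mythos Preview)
Your approach is correct and is precisely the dualization of the proof of Lemma~\ref{lemmadecsticky} that the paper has in mind: the paper does not write out a proof, only remarking that it ``is similar and we leave the details for the reader,'' and your replacement of random selection by random enlargement (with the roles of $C_F$ and $\Delta_{max}$ swapped) is exactly that similar argument. The steps you flag---the randomization bound $\Delta_{max}\lessapprox(\rho_{m-1}/\rho_m)^\eps$ for the added tubes and the passage from the scales $\rho_m$ to arbitrary $\rho$ via the first bullet---are the same as in Lemma~\ref{lemmadecsticky}, so nothing new is needed.
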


The proof is similar and we leave the details for the reader.

\section{Main Lemma 1} \label{secmainlemma1}

In this section, we sketch the proof of Main Lemma \ref{lemmain1}.  We first recall the statement.  

\begin{mlem*}  $K_{KT}(\beta)$ implies $K_F(\beta)$.  
\end{mlem*}

Let's first digest what the lemma is saying.   Suppose that $\TT$ is a set of $\delta$ tubes in $B_1$ with $C_F(\TT) \lessapprox 1$.   We have to prove the bound

\begin{equation} \label{ml1goal}
 \mu( \TT) \lessapprox ( \delta^{-2})^\beta \left( \delta^2 | \TT | \right)^{1 - \beta}. 
\end{equation}

\noindent Since $C_F(\TT, B_1) \lessapprox 1$,  we have $|\TT| \gtrapprox \delta^{-2}$.   If $|\TT| \approx \delta^{-2}$,  then $C_F(\TT, B_1) \lessapprox 1$ is equivalent to $\Delta_{max}(\TT) \lessapprox 1$,  and then $K_{KT}(\beta)$ gives $\mu(\TT) \lessapprox \delta^{-2 \beta}$,  which is the desired bound.   So the case $| \TT | \approx \delta^{-2}$ is trivial.   So the content of the lemma is in the case when $ | \TT | \gg \delta^{-2}$.

If $| \TT | \gg \delta^{-2}$,  then $\Delta_{max}(\TT) \gg 1$, so we can not immediately apply $K_{KT}(\beta)$ to $\TT$.    If $\tilde \TT \subset \TT$ is a random subset with $| \tilde \TT | \sim \delta^{-2}$,  then it's not hard to check that $\Delta_{max}(\tilde \TT) \lessapprox 1$.   We can apply $K_{KT}(\beta)$ to $\tilde \TT$,  and we get $\mu(\tilde \TT) \lessapprox \delta^{-2 \beta}$ which is equivalent to $|U(\tilde \TT)| \gtrapprox \delta^{2 \beta}$.   We have $|U(\TT)| \ge |U(\tilde \TT)|$,  and this translates into

\begin{equation} \label{ml1triv}
 \mu( \TT) \lessapprox ( \delta^{-2})^\beta \left( \delta^2 | \TT | \right)
\end{equation}

Comparing (\ref{ml1triv}) with our goal (\ref{ml1goal}),  we see that we have reduced the exponent on the last factor from 1 to $1 - \beta$.   In the proof of Kakeya,  any exponent strictly less than 1 will work.   We can also describe the lemma in terms of $|U(\TT)|$,  which may be more intuitive.   Since we assume $K_{KT}(\beta)$,  we already know that if $C_F(\TT) \lessapprox 1$ and $| \TT | \sim \delta^{-2}$,  then $|U(\TT)| \gtrapprox \delta^{2 \beta}$.   Now we need to show that if $C_F(\TT) \lessapprox 1$ and if $ | \TT | $ is far larger than $\delta^{-2}$,  then $|U(\TT)|$ is a little larger than $\delta^{2 \beta}$.   While this may sound intuitive,  it is not at all trivial to prove, and the proof will depend crucially on sticky Kakeya.

We organize the proof as a bootstrapping or self-improving argument.

\begin{lemma} \label{lemmain1boot} Suppose that $K_{KT}(\beta)$ holds and that $K_F(\gamma)$ holds for some $\gamma > \beta$.  Then $K_F(\gamma - \nu)$ holds for $\nu = \nu(\gamma, \beta) > 0$.
\end{lemma}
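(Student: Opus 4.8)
The plan is to run a single two-scale decomposition at a pigeonholed intermediate scale, estimate the two resulting factors using $K_{KT}(\beta)$ and $K_F(\gamma)$ (after first normalizing them with the maximal density factoring lemma and the plank estimates of Section \ref{secslabplank}), and extract a power gain from the fact that a Frostman set with $\delta^2|\TT|\gg 1$ is never Ahlfors--David regular. Concretely: let $\TT$ be a $\delta^{\eps}$-uniform set of $\delta$-tubes in $B_1$ with $C_F(\TT,B_1)\lessapprox 1$ and a shading $Y$, $\lambda(Y)\gtrapprox 1$; we must show $\mu(\TT)\lessapprox(\delta^{-2})^{\gamma-\nu}(\delta^2|\TT|)^{1-(\gamma-\nu)}$. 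If $|\TT|\approx\delta^{-2}$ then $C_F(\TT,B_1)\lessapprox 1$ is equivalent to $\Delta_{max}(\TT)\lessapprox 1$, so $K_{KT}(\beta)$ gives $\mu(\TT)\lessapprox\delta^{-2\beta}$, which already beats the target once $\nu$ is small enough that $\beta<\gamma-\nu$. So I may assume $\delta^2|\TT|\gg 1$; this is exactly the regime in which $\TT$ fails to be AD-regular, and that failure is what produces the gain.

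\emph{The two-scale split and the two factors.} Pick an intermediate scale $r=\delta^{j\eps}$, set $\rho=\delta/r$ (so the two ``resolutions'' satisfy $r\cdot\rho=\delta$), and follow Section \ref{secglimmer}: for a typical $r$-ball $B$ the restricted tubes $\TT_B$ become, under the linear map $B\to B_1$, a family of $\rho$-tubes with $|\TT_B|\lessapprox\rho^{-2}$, while for a typical fat tube $T_\rho\in\TT_\rho$ the tubes $\TT[T_\rho]$ become a family of $r$-tubes, of cardinality $N(\rho):=|\TT[T_\rho]|$. The basic inequality is
\[ \mu(\TT)\ \lessapprox\ \Big(\max_{T_\rho}\mu(\TT[T_\rho])\Big)\cdot\Big(\max_{B}\mu(\TT_B)\Big). \]
Neither $\TT[T_\rho]$ nor $\TT_B$ need satisfy any non-clustering condition, so to each I apply the maximal density factoring lemma (Lemma \ref{lemmafactmax}): it writes the family as a disjoint union over a Katz-Tao collection $\WW$ of convex sets with the tubes inside each $W$ Frostman in $W$, and then (after pigeonholing) $\mu$ of that family is $\lessapprox\mu(\WW)\cdot\max_W\mu(\text{tubes in }W)$. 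Now the slab/plank machinery takes over: after pigeonholing $\WW$ to planks of a common shape, $\mu(\WW)$ is bounded by the convex-set consequence of $K_{KT}(\beta)$, i.e.\ Lemma \ref{plankKT1} (or Lemma \ref{plankKT2} when the planks do not themselves cluster in slabs), and via the slab reduction of Lemmas \ref{slab3}--\ref{lemmaredplanktube} each Frostman-in-$W$ piece becomes a Frostman tube problem governed by $K_F(\gamma)$ through Lemma \ref{plankF}. When one of these pieces is instead AD-regular over a range of scales, I feed it to sticky Kakeya: Lemmas \ref{lemmadecsticky}--\ref{lemmasubsticky} produce a (sub- or super-) sticky set and Theorem \ref{thmstickykak} bounds its multiplicity by $(\text{its cardinality})^{\alpha}$ with $\alpha$ arbitrarily small, far better than any $K_F(\gamma)$ bound.

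\emph{Where the gain lives, and the main obstacle.} Because $\delta^2|\TT|\gg 1$, the telescoping identity $|\TT|\approx\prod_m|\TT_{\rho_m}[T_{\rho_{m-1}}]|$ has branching strictly above the AD-regular value $\prod_m(\rho_{m-1}/\rho_m)^2=\delta^{-2}$, so by pigeonholing one can choose $r$ so that the ``excess'' is carried — in a quantitatively controlled way — by the coarse factor $\TT[T_\rho]$, the fine factor $\TT_B$, or a slab-clustered sub-piece. Whichever it is, the estimate for that piece is precisely the one in which we are entitled to use the strictly-better input: $K_{KT}(\beta)$ with $\beta<\gamma$ through Lemma \ref{plankKT1}/\ref{plankKT2} (or Lemma \ref{genKKT}), or sticky Kakeya with exponent $\alpha\ll\gamma$. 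A computation of exactly the type carried out in Section \ref{secglimmer} — tracking $r,\rho$, $N(\rho)$ and $|\TT_B|\lessapprox\rho^{-2}$ — then shows $\mu(\TT)\lessapprox(\delta^{-2})^{\gamma-\nu}(\delta^2|\TT|)^{1-(\gamma-\nu)}$ for some $\nu=\nu(\gamma,\beta)>0$, which is $K_F(\gamma-\nu)$; Main Lemma \ref{lemmain1} then follows by iterating this from the trivial bound $K_F(1)$ (valid because the multiplicity of any essentially distinct family of $\delta$-tubes is $\lessapprox\delta^{-2}$). I expect the real work — and the genuine obstacle — to be in controlling the localized families $\TT_B$ and $\TT[T_\rho]$, which carry no structure a priori (they could be all tubes, or could concentrate in a thin planar slab where $\Delta_{max}$ is huge but the union is small): organizing the full case analysis this forces, possibly with an inner recursion on scale, while correctly propagating the shading, $\lambda(Y)$, $\Delta_{max}$ and $C_F$ through all the rescalings and pigeonholes, and verifying the ``multiplicativity of multiplicity'' inequalities, which concern average rather than maximal multiplicity and hold only after suitable pigeonholing (cf.\ Section \ref{secleftout}).
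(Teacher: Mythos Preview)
Your plan diverges from the paper in two structural ways, and the second one is a genuine gap.

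First, the decomposition. The paper's proof of Lemma~\ref{lemmain1boot} never restricts to small balls $B$; the pair $(\TT[T_\rho],\TT_B)$ you propose is the Main Lemma~\ref{lemmain2} setup (Section~\ref{secmainlemma2over}). For Main Lemma~\ref{lemmain1} the only split used is the tube-nesting inequality $\mu(\TT)\le\mu(\TT[T_\rho])\,\mu(\TT_\rho)$, with $\TT_\rho$ the fat $\rho$-tubes. Relatedly, your assertion $|\TT_B|\lessapprox\rho^{-2}$ is not true without a non-clustering hypothesis on $\TT_B$: the space of essentially distinct $\rho$-tubes in $B_1$ has $\sim\rho^{-4}$ elements, and in Section~\ref{secglimmer} that bound appears only under the extra assumption $\Delta_{max}(\TT_B)\lessapprox1$.

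Second, and more substantively, the mechanism you describe for extracting the gain --- ``excess branching because $\delta^2|\TT|\gg1$, so pigeonhole a scale where the excess lands'' --- is not the engine of the paper's argument, and I do not see how to make it produce a definite $\nu>0$. The paper instead runs a \emph{stopping-time} on the notion of a \emph{sticky intermediate scale}: a scale $\rho$ with $\delta\ll\rho\ll1$ and $C_F(\TT[T_\rho],T_\rho)\approx1$. Whenever such a $\rho$ exists, split $\mu(\TT)\le\mu(\TT[T_\rho])\mu(\TT_\rho)$ and recurse on both pieces (both are again Frostman). This terminates in one of two ways. If every output piece $\TT_j$ has $\delta_j\ge\delta^{\exsticky/10}$, the nested sequence of splitting scales satisfies the hypotheses of Lemma~\ref{lemmadecsticky}, so $\TT$ \emph{contains} a $\delta^{-\exsticky}$-sticky set $\TT'$ with $\Delta_{max}(\TT')\lessapprox\delta^{-\exsticky}$, and Theorem~\ref{thmstickykak} gives $|U(\TT)|\ge|U(\TT')|\approx1$, far beyond the target. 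Otherwise some piece $\TT_j$ with $\delta_j\le\delta^{\exsticky}$ has \emph{no} sticky intermediate scale, i.e.\ $C_F(\TT_j[T_\rho])\gg1$ for every intermediate $\rho$. Apply the maximal density factoring to $\TT_j[T_\sigma]$ at $\sigma=\delta_j^{1/2}$: the resulting planks $W$ have dimensions $a\times b\times 1$ with $C_F(\TT_j[W],W)\lessapprox1$, but if $a\approx b$ then $W$ is a tube $T_b$ and $b$ would be a sticky intermediate scale --- contradiction unless $b\approx\delta_j$. So either $a\ll b$ (eccentric case: Lemma~\ref{plankF} applied to both $\WW$ and $\TT_j[W]$ yields a factor $(a/b)^\gamma\ll1$ below the $K_F(\gamma)$ bound) or $b\approx\delta_j$ (small case: $\WW\approx\TT_j$, hence $\Delta_{max}(\TT_j)\lessapprox1$ and $|\TT_j|\approx\delta_j^{-2}$, so $K_{KT}(\beta)$ gives $\mu(\TT_j)\lessapprox\delta_j^{-2\beta}$, beating $K_F(\gamma)$ since $\beta<\gamma$). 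Note that the argument does not use $\delta^2|\TT|\gg1$ beyond the trivial reduction you already wrote down; the gain comes from the sticky/not-sticky dichotomy, not from excess branching.
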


Since $K_F(1)$ holds trivially, Lemma \ref{lemmain1boot} implies Main Lemma \ref{lemmain1}.  

The detailed proof of Lemma \ref{lemmain1boot} involves some delicate keeping track of various small parameters.  Including these small parameters is a little messy,  but it is important.   In this section,  we sketch the proof without keeping track of everything in detail.   We will not be precise about the exact meaning of the symbols $\lessapprox$ and $\ll$.  

We will choose $\nu(\beta, \gamma) > 0$ below, very small compared to $\exsticky$ from the sticky Kakeya theorem.  

Suppose that $\TT$ is a set of $\delta$-tubes in $B_1 \subset \RR^3$ with $C_F(\TT) \lessapprox 1$.  
 We have to show that 

\begin{equation} \label{mugoalml1}
 \mu( \TT) \lessapprox   \delta^{\nu} ( \delta^{-2})^\gamma \left( \delta^2 | \TT | \right)^{1 - \gamma}. 
 \end{equation}

We can assume without loss of generality that $\TT$ is uniform.

Starting with $\TT$,  we say that $\rho$ is a sticky intermediate scale if $\delta \ll \rho \ll 1$ and $C_F (\TT[T_\rho])$ is almost 1.   (We use the word sticky here because we will see below that if every intermediate scale is sticky,  then  $\TT$ contains a sticky Kakeya set $\TT'$. )

If $\TT$ has a sticky intermediate scale $\rho$,  then we can bound $\mu(\TT)$ by

$$ \mu(\TT) \le \mu(\TT [T_\rho]) \mu(\TT_\rho), $$

\noindent reducing to two smaller problems.  After rescaling, we can think of $\TT [T_\rho]$ as a set of $(\delta/\rho)$-tubes in $B_1$.  We let $\TT_1$ denote this set of tubes and $\TT_2 = \TT_\rho$, so that $\mu(\TT) \lessapprox \mu(\TT_1) \mu(\TT_2)$.  Now we can apply the same process to $\TT_1$ and $\TT_2$.    

We use this process as part of a stopping time argument.  The stopping criterion is as follows.   We choose a small parameter $\eps$,  setting $\eps = \exsticky / 10$.   Suppose that $\tilde \TT$ is a set of $\tilde \delta$-tubes in $B_1$.    If $\tilde \delta \ge \delta^\eps$, we stop.   If $\tilde \TT$ has a sticky intermediate scale $\tilde \rho$, then we bound $\mu(\tilde \TT) \le \mu(\tilde \TT[T_{\tilde \rho}]) \mu(\tilde \TT_{\tilde \rho})$.   If $\tilde \TT$ does not have a good intermediate scale, then we stop.  The stopping time argument outputs a bound of the form

$$ \mu(\TT) \le \prod_j \mu(\TT_j), $$

\noindent where $\TT_j$ is a set of $\delta_j$-tubes in $B_1$ with the following properties:

\begin{itemize}

\item For each $j$,  $C_F(\TT_j)$ is small.

\item For each $j$, either $\delta_j \ge \delta^{\eps}$ or $\TT_j$ has no sticky intermediate scale.  

\end{itemize}

If every $\delta_j$ obeys $\delta_j \ge \delta^\eps$,  then we can use Lemma \ref{lemmadecsticky} to show that $\TT$ contains a sticky Kakeya set $\TT'$.   Unwinding the stopping time argument,  we see that in this case there is a sequence of $\delta = \rho_J \le ... \le \rho_1 \le \rho_0 = 1$ so that $\TT_j$ corresponds to $\TT_{\rho_j}[T_{\rho_{j-1}}]$.   By our stopping criteria,  we see that $C_F(\TT_{\rho_j}[T_{\rho_{j-1}} ],  T_{\rho_{j-1}})$ is roughly 1 and that each ratio $\rho_j / \rho_{j-1}$ is at least $\delta^\eps$.   Then Lemma \ref{lemmadecsticky} tells us that $\TT$ contains a subset $\TT'$ which is $\delta^{- \exsticky}$-sticky and obeys $\Delta_{max}(\TT') \lessapprox \delta^{-\exsticky}$.   Then the sticky Kakeya theorem,  Theorem \ref{thmstickykak},  tells us that $\mu(\TT')$ is roughly 1 and so $| U(\TT') |$ is roughly 1.   Then $|U(\TT)| \ge |U(\TT')|$, and so we get essentially sharp bounds for $|U(\TT)|$ and hence also for $\mu(\TT)$.  

Otherwise, there is at least one $j$ so that $\TT_j$ has no sticky intermediate scale.   Before we deal with this scale,  let us discuss how $\mu(\TT)$ relates to $\mu(\TT_j)$.   Recall that we have $\mu(\TT) \lessapprox \prod_j \mu(\TT_j)$.   
We can apply $K_F(\gamma)$ to each $\TT_j$,  and this would recover the same bound that we get by applying $K_F(\gamma)$ to $\mu(\TT)$.   In order to beat this bound and prove (\ref{mugoalml1}),  it suffices to beat the bound for a single $\TT_j$.   Now we focus on the specific $\TT_j$ that has no sticky intermediate scale,  and we prove a bound for $\mu(\TT_j)$ that beats the bound from $K_F(\gamma)$.

Recall that $\TT_j$ that has no sticky intermediate scale and that $\delta_j \le \delta^{\exsticky}$.   Pick $\sigma = \delta_j^{1/2}$.   Since $\TT_j$ has no sticky intermediate scale,  $C_F(\TT_j[T_\sigma]) \gg 1$.   Let $\WW_{T_\sigma}$ be the maximal density factoring of $\TT_j[T_\sigma]$ given by Lemma \ref{lemmafactmax}.   Suppose that each $W \in \WW_{T_\sigma}$ has dimensionsions $a \times b \times 1$ with $ a\le b \le \sigma < 1$.    The maximal density factoring lemma tells us that $C_F(\TT_j[W],  W) \lessapprox 1$. 

Notice that if $a=b$,  then $W = T_b$,  and so $C_F(\TT_j[T_b]) \lessapprox 1$.   Since $\TT_j$ has no sticky intermediate scale,  this is only possible if $b$ is very close to $\delta$.   If we quantify this argument,  we see that either $a \ll b$ (the eccentric case) or $b \lessapprox \delta$ (the small case).

\vskip10pt

{\bf The eccentric case}

\vskip10pt

In the eccentric case,  we set $\WW = \sqcup_{T_\sigma \in \TT_\sigma} \WW_{T_\sigma}$.   Then we can bound $\mu(\TT_j) \lessapprox \mu(\TT_j[W]) \mu(\WW)$.   Now we bound each of these factors using Lemma \ref{plankF}. 

To use Lemma \ref{plankF}, we need to check that each set involved is Frostman.  
Notice that $C_F(\TT_j[W], W) \lessapprox 1$ by the factoring lemma,  and $C_F(\WW) \lessapprox C_F(\TT) \lessapprox 1$.  

First we estimate $\mu(\WW)$.    To use Lemma \ref{plankF},  we will need to estimate $N(\theta)$,  the number of $a \times b \times 1$ planks in a $\theta b \times b \times 1$ thick plank.
For $\WW$,  we can estimate $N(\theta)$ by noting that $\Delta_{max} (\WW_{T_\sigma}) \lessapprox 1$.  A given thick plank $W'$ is contained in a tube $T_\sigma$, and we have $\WW[W'] = \WW_{T_\sigma}[W']$ and so $N(\theta) |P| \lessapprox |P'|$.  Therefore, for every $\theta \in [a/b, 1]$, we have
 
 $$N(\theta) \lessapprox \frac{\theta b}{a}. $$
 
 \noindent Plugging into Lemma \ref{plankF}, we get
 
 \begin{equation} \label{eqmuW}  \mu(\WW) \lessapprox  (a/b)^{1 - \gamma} b^{-2 \gamma} (b^2 |\WW|)^{1 - \gamma} \end{equation}
  
 Next we estimate $\mu(\TT_j[W])$.  For $\TT_j[W]$,  we first have to make a change of coordinates that maps $W$ to $B_1$ and each $\delta_j  \times \delta_j  \times 1$ tube  $T \in \TT_j[W]$ to a $b^{-1} \delta_j \times a^{-1} \delta_j \times 1$ plank.   Next we need to bound $N(\theta)$,  the number of such planks in a $\theta a^{-1} \delta_j \times a^{-1} \delta_j \times 1$ thick plank.   Undoing the coordinate change, $N(\theta)$ is the number of tubes of $\TT$ in a $\delta_j \times \frac{\theta b}{a} \delta_j \times 1$ plank.  Since the tubes of $\TT$ are essentially distinct, we get $ N(\theta) \lessapprox \left( \frac{\theta b}{a} \right)^2$ and so
 
 $$ N(\theta) / \theta \lessapprox (b/a)^2 \theta \le (b/a)^2. $$ 
 
 Now we plug into Lemma \ref{plankF}, remembering that the dimension of each plank is $b^{-1} \delta_j \times a^{-1} \delta_j \times 1$.  Using our bound for $N(\theta)$, we get
  
 \begin{equation} \label{eqmuTW}  \mu(\TT_j[W]) \lessapprox (a/b)^{1 - 2 \gamma} (a^{-1} \delta_j)^{-2 \gamma} (a^{-2} \delta_j^2 | \TT_j[W]| )^{1 - \gamma}. \end{equation}
 
 We have $\mu(\TT_j) \lessapprox \mu(\WW) \mu(\TT_j[W])$.  Bounding the factors using  (\ref{eqmuW}) and (\ref{eqmuTW}) we get
 
 $$ \mu(\TT_j) \lessapprox \left( \frac{a}{b} \right)^\gamma \delta_j^{-2 \gamma} \left( \frac{\TT}{\delta_j^{-2}} \right)^{1 - \gamma},$$

\noindent which is a factor of $\left( \frac{a}{b} \right)^\gamma $ better than the bound from $K_F(\gamma)$.    In the eccentric case, $a/b \ll 1$ and so we improve significantly on the bound from $K_F(\gamma)$, which gives the desired conclusion (\ref{mugoalml1}).

\vskip10pt

{\bf The small case}

\vskip10pt

In the small case,  $\WW$ is essentially equal to $\TT_j$,  and so $\Delta_{max}(\TT_j) \approx \Delta_{max}(\WW) \lessapprox 1$.   Therefore,  we can bound $\mu(\TT_j)$ using $K_{KT}(\beta)$.   We note that since $C_F(\TT_j) \lessapprox 1$ and $\Delta_{max}(\TT_j) \lessapprox 1$,  we must have $|\TT_j| \approx \delta_j^{-2}$.   Now $K_{KT}(\beta)$ gives

$$ \mu(\TT_j) \lessapprox | \TT_j|^{\beta} \approx \delta_j^{- 2 \beta} ( \delta_j^2 | \TT_j | )^{1 - \beta}. $$

Since $\beta < \gamma$,  this bound improves on the one from $K_F(\gamma)$ and gives the desired conclusion (\ref{mugoalml1}).

\section{Main Lemma 2 overview}   \label{secmainlemma2over}

Our last goal is to prove Main Lemma \ref{lemmain2}.  We recall the statement.  

\begin{mlem*}  If $K_{KT}(\beta)$ and $K_{F}(\beta)$ hold,  then $K_{KT}(\beta - \nu)$ holds for $\nu = \nu(\beta) > 0$.   
\end{mlem*}

\subsection{Initial reductions}

The sticky Kakeya theorem plays a crucial role in this proof.   By using sticky Kakeya, we are able to reduce the problem to a situation which is not sticky in a strong sense. 

\begin{lemma} \label{lemmain2vns} (Very not sticky case of Main Lemma \ref{lemmain2}, approximate version) Suppose $\exscal > 0$.   Suppose that $K_{F}(\beta)$ holds for some $\beta > 0$.   Suppose that $\TT$ is a set of $\delta$-tubes in $B_1$ obeying

\begin{itemize}

\item $\Delta_{max}(\TT) \lessapprox 1$.

\item For each $\rho \in [\delta^{1 - \exscal}, \delta^{\exscal}]$,  $| \TT_\rho | \gg \rho^{-2}$.  

\end{itemize}

Then $\mu(\TT) \lessapprox |\TT|^{\beta - \nu}$,  where $\nu = \nu(\beta, \exscal) > 0$.

\end{lemma}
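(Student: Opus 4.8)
\emph{Proof plan.} The plan is to run the single-scale ball factorization of Section~\ref{secglimmer}, using the very-not-sticky hypothesis to make the improvement quantitative, and to absorb the remaining difficulty into the incidence geometry of a tube set localized to a small ball.

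First I would reduce to the case that $\TT$ is uniform. From $\Delta_{max}(\TT)\lessapprox 1$ we get $|\TT|\lessapprox\delta^{-2}$ and $|\TT[T_\rho]|\lessapprox(\rho/\delta)^{2}$ for every scale $\rho$. Reading hypothesis~(b) quantitatively as $|\TT_\rho|\ge\delta^{-\exscal}\rho^{-2}$ and combining with $|\TT|=|\TT_\rho|\,|\TT[T_\rho]|$ gives
$$|\TT[T_\rho]|\;\le\;\delta^{\exscal}\,|\TT|\,\rho^{2}\qquad\text{for all }\rho\in[\delta^{1-\exscal},\delta^{\exscal}],$$
so $\TT$ sits a fixed power $\delta^{\exscal}$ below the Katz--Tao threshold inside a typical $\rho$-tube, at every intermediate scale. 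Now fix such a scale $\rho$ and set $r=\delta/\rho$. For a typical $r$-ball $B$, the submultiplicative bound of Section~\ref{secglimmer} gives
$$\mu(\TT)\;\lessapprox\;\mu(\TT[T_\rho])\,\mu(\TT_B),$$
where, after rescaling $T_\rho$ and $B$ to $B_1$, the set $\TT[T_\rho]$ is a Katz--Tao set of $(\delta/\rho)$-tubes with $|\TT[T_\rho]|\le\delta^{\exscal}|\TT|\rho^{2}$, and $\TT_B$ is a set of $\rho$-tubes with $|\TT_B|\lessapprox\rho^{-2}$. Bounding $\mu(\TT[T_\rho])\lessapprox|\TT[T_\rho]|^{\beta}\le\delta^{\exscal\beta}(|\TT|\rho^{2})^{\beta}$ by $K_{KT}(\beta)$ (available in the setting of Main Lemma~\ref{lemmain2}, and in any case a consequence of $K_F(\beta)$ via a maximal density factoring), it would remain to prove $\mu(\TT_B)\lessapprox\rho^{-2\beta}$; granting this,
$$\mu(\TT)\;\lessapprox\;\delta^{\exscal\beta}\bigl(|\TT|\rho^{2}\bigr)^{\beta}\rho^{-2\beta}\;=\;\delta^{\exscal\beta}\,|\TT|^{\beta},$$
and since $|\TT|\lessapprox\delta^{-2}$ we may take $\nu=\exscal\beta/3$, giving the claimed $\mu(\TT)\lessapprox|\TT|^{\beta-\nu}$.

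It therefore suffices to bound $\mu(\TT_B)$ for the localized set. Here I would apply the maximal density factoring lemma (Lemma~\ref{lemmafactmax}) to $\TT_B$, writing it up to refinement as $\sqcup_{W}\TT_B[W]$ over a Katz--Tao family $\WW$ of planks with each $\TT_B[W]$ Frostman in $W$, so that $\mu(\TT_B)\lessapprox\mu(\TT_B[W])\,\mu(\WW)$, and estimate the two factors using $K_F(\beta)$ together with the slab and plank machinery of Section~\ref{secslabplank} (Lemmas~\ref{slab3}, \ref{lemmaredslab}, \ref{lemmathicken}, \ref{lemmaredplanktube}, \ref{plankF}, \ref{plankKT1}, \ref{plankKT2}). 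When the planks $W$ are round---or when $\TT_B$ is already Katz--Tao---the bound $|\TT_B|\lessapprox\rho^{-2}$ yields $\mu(\TT_B)\lessapprox\rho^{-2\beta}$ directly. The delicate case is when $\TT_B$, hence $\TT$ near $B$, concentrates into thin slabs, so that the planks $W$ are eccentric: then the eccentric plank estimates Lemmas~\ref{plankKT2} and~\ref{plankF} contribute an extra gain proportional to a power of the eccentricity, so such planiness is in fact favorable, provided one can propagate the sub-criticality coming from hypothesis~(b) into the slabs. Since~(b) holds at \emph{every} intermediate scale, not only at $\rho$, one iterates the whole scheme---a stopping-time argument over scales, peeling off one slab-concentration at a time, down to the trivial scale $\delta^{\exscal}$---so that the gains at the successive scales can only accumulate.

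I expect the main obstacle to be exactly this last step: controlling the slab-concentrated case, which is where the planiness of the tube set must be confronted, and coordinating the small parameters ($\eps$, $\exscal$, $\nu$) together with the shadings through the ball localization, the maximal density factorings, and the various pigeonholing reductions, so that the iteration terminates with a genuine net improvement. The conceptual inputs---the submultiplicative ball factorization, the sub-criticality of $\TT[T_\rho]$ forced by~(b), and the favorable effect of any planiness via the eccentric plank estimates---are as described above.
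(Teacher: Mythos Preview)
Your proposal shares the opening move with the paper---fix $\rho=\delta^{1-\exscal}$, $r=\delta/\rho$, pass to the ball factorization $\mu(\TT)\lessapprox\mu(\TT[T_\rho])\,\mu(\TT_B)$, and apply the maximal density factoring to $\TT_B$---but the organizing strategy thereafter diverges, and the reduction you set up does not close.

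The gap is your target bound $\mu(\TT_B)\lessapprox\rho^{-2\beta}$. This fails in general: if the factoring of $\TT_B$ returns the ball $W=B$ itself (the ``round'' case you flag as easy), then $\TT_B$ is Frostman in $B$ but may have $|\TT_B|\sim\rho^{-2}M$ with $M\gg 1$, and $K_F(\beta)$ gives only $\mu(\TT_B)\lessapprox\rho^{-2\beta}M^{1-\beta}$. There is no reason for $M$ to be small, and your claim $|\TT_B|\lessapprox\rho^{-2}$ is not justified---it is exactly the special case singled out in Section~\ref{secglimmer} as wishful thinking. The eccentric-plank gains from Lemmas~\ref{plankKT2} and~\ref{plankF} go in the wrong direction here: they help when the planks $W$ are thin, not when $\TT_B$ is fat inside a round $W$. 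So placing the entire gain in the factor $\mu(\TT[T_\rho])$ and asking for a neutral $\mu(\TT_B)$ bound is the wrong split.

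The paper's argument does not attempt to bound $\mu(\TT_B)$ uniformly. It instead branches on the small dimension $a$ of the planks $W$ from the factoring of $\TT_B$. In the \emph{thick} case $a\gg\delta$ (which subsumes large~$M$ above), it abandons the multiplicative bound altogether and runs a density argument: using $K_F(\beta)$ on $\TT_B[W]$ via Lemma~\ref{plankF}, together with a biased variant of the factoring lemma, to show $|U(\TT)\cap B_a|/|B_a|\gg(\delta/a)^{2\beta}$, and then $K_{KT}(\beta)$ on a thinned $\TT_a$ to conclude. In the \emph{thin} case $a\approx\delta$ (further split into slab\,/\,transverse\,/\,tangential subcases), the decisive tangential subcase replaces your fixed scale $\rho$ by a geometry-dependent scale $\rho_2=b/r$ and uses the alternative factorization $\mu(\TT)\lessapprox\mu(\TT[T_{\rho_2}])\,\mu(\WW)$; the very-not-sticky hypothesis is invoked at \emph{this} scale to get $\mu(\WW)\lessapprox\rho_2^{-2\beta}\ll|\TT_{\rho_2}|^\beta$. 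No multi-scale stopping time is needed for the lemma itself---the iteration you describe is what the paper uses to reduce Main Lemma~\ref{lemmain2} \emph{to} this very-not-sticky statement, not to prove it.
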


The precise statement of Lemma \ref{lemmain2vns} has a precise version of $\ll$,  which involves choosing some small exponents.

The proof that Lemma \ref{lemmain2vns} implies Main Lemma \ref{lemmain2} is similar to the proof of Main Lemma \ref{lemmain1},  using Lemma \ref{lemmasubsticky} instead of Lemma \ref{lemmadecsticky}.  We omit the details.

\subsection{Looking at smaller balls $B \subset B_1$}

As in our first glimmer of the proof,  we will study how $\TT$ behaves in smaller balls.   We pick the scale  $\rho = \delta^{1 - \exscal}$ and let $r = \delta/\rho = \delta^{\exscal}$.    Next let $B = B(x,  r)$ be a typical ball in $U(\TT_{r})$. 

As in (\ref{multprod}) in Section \ref{secglimmer},  we can define $\TT_B$,  a set of $\delta \times \delta \times r$ tubes in $B$ so that

\begin{equation} \label{multprod}
 \mu(\TT) \lessapprox \mu(\TT[T_\rho]) \mu(\TT_B). 
 \end{equation}
 
 We apply Lemma \ref{lemmafactmax} to $\TT_B$,  and we let $\WW$ be the maximal density factoring of $\TT_B$.   By pigeonholing,  we can suppose that each $W \in \WW$ has dimensions $a \times b \times r$,  with $a \le b \le r$.
 
 The way we proceed depends on the shape of $W$.   If $a \gg \delta$,  then we are in the thick case.   If $a \lessapprox \delta$,  then we are in the thin case.    We deal with these cases in the next two sections.

\section{The thick case}

 In the thick case,  we will show that $U(\TT)$ fills a large fraction of a typical ball $B_a \subset U(\TT_a)$,  and we will use this to show that $U(\TT)$ is large.

\subsection{Density of $U(\TT)$ in small balls} \label{subsecdenssmallballs}

Recall that $\lambda(\TT) \gtrapprox 1$, and so double counting gives us

$$ \mu(\TT) | U(\TT) | \approx |\TT| |T| .  $$

\noindent We are interested in upper bounds for $\mu(\TT)$, which are equivalent to lower bounds for $| U(\TT)|$.  Our goal bound $\mu(\TT) \lessapprox |\TT|^{\beta - \nu}$ is equivalent to

\begin{equation} \label{eqgoalUT}
 |U(\TT)| \gtrapprox |\TT|^{1 - \beta + \nu} \delta^2. \end{equation}

Suppose that $\delta \le r \le 1$.  We can lower bound

$$ | U(\TT) | \gtrapprox | U(\TT_r) | \frac{ |U(\TT) \cap B_r |}{|B_r|}, $$

\noindent where the fraction on the right applies to a typical ball $B_r$ in $U(\TT_r) $.  

We know $\Delta_{max}(\TT) \lessapprox 1$,  but this does not tell us $\Delta_{max}(\TT_r) \lessapprox 1$.   It does tell us that

$$ \Delta_{max}(\TT_r) \lessapprox \frac{ (r/\delta)^2 }{ | \TT[T_r] |}. $$

We choose a random subset $\TT'_r \subset \TT_r$ where each tube is included with probability $p = \frac{ |\TT[T_r] |}{ (r/\delta)^2 }$.  (Note that since $\Delta_{max}(\TT) \lessapprox 1$,  $| \TT[T_r] | \lessapprox (r / \delta)^2$ and so $p \lessapprox 1$.  If $p \ge 1$, we just set $p=1$.)

 Since $p \Delta_{max}(\TT_r) \lessapprox 1$,  we see that $\Delta_{max}(\TT'_r) \lessapprox 1$.   The cardinality of $\TT'_r$ works out in a clean way:

$$ | \TT'_r | \approx \frac{ \TT[T_r]}{ (r/\delta)^2 } | \TT_r | \approx  \frac{ \TT[T_r]}{ (r/\delta)^2 } \frac{ | \TT |}{|\TT[T_r]|} = \left( \frac{\delta}{r} \right)^2 | \TT |. $$

 Now using $K_{KT}(\beta)$, we see that

$$ | U(\TT_r)| \ge |U(\TT'_r)| \gtrapprox | \TT'_r|^{1 - \beta} r^2 \approx |\TT|^{1 - \beta} \delta^2 \left( \frac{ r}{\delta} \right)^{2 \beta}. $$

Therefore, our goal (\ref{eqgoalUT}) would follow if we knew that the following inequality holds for some $r \in [\delta, 1]$:

\begin{equation} \label{eqgoaldens}
 \frac{ |U(\TT) \cap B_r |}{|B_r|} \gtrapprox \delta^{- \nu} \left( \frac{\delta}{r} \right)^{2 \beta}
\end{equation}

In the thick case,  when $a \gg \delta$,  we will show (\ref{eqgoaldens}) for $r=a$.

\subsection{Maximal density factoring revisited}

In Lemma \ref{lemmafactmax},  we chose sets $W$ to maximize $\Delta(\VV,  W)$.   To get a good bound in the thick case,  it is important to choose carefully in the case of a tie or a near tie.   When the maximum density is achieved by two convex sets with very different sizes,  we choose $W$ to be the convex set of smaller volume.   More precisely,  we adjust our maximization process as follows.   We let $\exfact > 0$ be a tiny exponent to be chosen later.   Then,  instead of choosing $W$ to maximize $\Delta(\VV, W)$,  we choose $W$ to maximize

$$ |W|^{- \exfact} \Delta(\VV, W). $$

This leads to a variation of Lemma \ref{lemmafactmax} with two small changes.   If $K \subset W \in \WW$,  then we have a slightly stronger estimate

\begin{equation} \label{factmaxmod1}
\Delta(\VV_{uni}[W], K) \lessapprox \left( \frac{ |K|}{|W|} \right)^{\exfact} \Delta(\VV_{uni}[W], W).
\end{equation}

On the other hand,  we have a slightly weaker estimate for $\Delta_{max}(\WW)$.   If the sets of $\VV$ are all contained in a large convex set $\Omega$,  then 

\begin{equation} \label{factmaxmod2}
\Delta_{max}(\WW) \lessapprox \left( \frac{|\Omega|}{|W|} \right)^{\exfact}
\end{equation}

When we apply Lemma \ref{lemmafactmax} to $\TT_B$,  we actually use this small modification.   The equation (\ref{factmaxmod1}) tells us in particular that for any convex set $K \subset B$,  

\begin{equation} \label{factmaxmodbias}
\Delta(\TT_B,  W) / \Delta(\TT_B,  K) \gtrapprox \left( \frac{|W|}{|K|} \right)^{\exfact}.
\end{equation}

In particular,  taking $K = T_B$,  we have $\Delta(\TT_B,  W) \gg \Delta(\TT_B,  T_B) = 1$,  and so

\begin{equation} \label{TBWbig}
 | \TT_B[W] | \gg \frac{ |W| }{|T_B|}. 
 \end{equation}










\subsection{A simple case}

The simplest scenario in the thick case is when $W = B_r$.   The strong estimate (\ref{factmaxmod1}) implies that $C_F(\TT_B,  B) \lessapprox 1$,  and so we can apply $K_F(\beta)$ to $\TT_B$.   Strictly speaking,  to do the application we rescale $B = B(x,r)$ to the unit ball $B_1$,  which sends $\TT_B$ to a set $\tilde \TT_B$ of $\delta/r$ tubes.   Then we get

$$ \frac{  |U(\TT_B)| }{|B|} = | U(\tilde \TT_B) | \gtrapprox (\delta/r)^{2 \beta} \left( |\tilde T_B| |\tilde \TT_B| \right)^\beta. $$

By (\ref{TBWbig}), we know that

$$ | \TT_B | = |\TT_B[W]| \gg \frac{ |B| }{|T_B|} = \frac{ |B_1| }{|\tilde T_B|}, $$

and so the factor in parentheses on the right-hand side is $\gg 1$.   All together we have

$$ \frac{ |U(\TT) \cap B_r|} {|B_r|} \gtrapprox  \frac{  |U(\TT_B)| }{|B|} \gg (\delta/r)^{2 \beta}. $$

This gives (\ref{eqgoaldens}).

\subsection{The general case}

In the general version of the thick case,  $W$ has dimensions $a \times b \times r$,  with $\delta \ll a \le b \le r$.   
We need to prove that $U(\TT)$ fills up a large portion of a typical ball $B_a \subset U(\TT_a)$.   In particular,  we need to prove (\ref{eqgoaldens}):

\begin{equation} \label{eqgoaldensthick}
 \frac{ |U(\TT) \cap B_a |}{|B_a|} \gg \left( \frac{\delta}{a} \right)^{2 \beta}
\end{equation}

For a typical $B_a$ in a typical $W$,  we have

$$ \frac{ |U(\TT) \cap B_a |}{|B_a|} \gtrapprox \frac{ |U(\TT_B[W])| }{|W|}. $$

Now we make a linear change of variables to convert $W$ to $B_1$ and converting $\TT_B[W]$ to a set of planks $\PP$ in $B_1$.   We have $\frac{ |U(\TT_B[W])| }{|W|} \sim |U(\PP)|$,  so it suffices to prove

$$|U(\PP)| \gg \left( \frac{\delta}{a} \right)^{2 \beta}$$

If $W$ has dimensions $a \times b \times r$,  then a plank $P \in \PP$ has dimensions $\frac{\delta}{b} \times \frac{\delta}{a} \times 1$.  
Note that $C_F(\PP, B_1) = C_F(\TT_B[W], W) \lessapprox 1$.

Now we apply Lemma \ref{plankF}.   Let $\theta$ be a typical angle of intersection for the planks of $\PP$.   Recall that $\PP_{thick}$ are thickened planks of $\PP$ with dimensions $\theta \frac{\delta}{a} \times \frac{\delta}{a} \times 1$.    Using $K_F(\beta)$,  Lemma \ref{plankF} gives us the lower bound

$$ |U(\PP)| \gtrapprox \left( \frac{ \delta}{a} \right)^{2 \beta} \left( |P_{thick} | | \PP_{thick} | \right)^\beta. $$

So it suffices to see that 

$$|P_{thick}| |\PP_{thick} | \gg 1.$$    

This will follow from the way we selected $W$ using the modified maximal density factoring lemma.   In order to see this,  we notice that $|\PP_{thick}| = \frac{ | \PP |}{ | \PP[P_{thick}] | }$,  and so our desired inequality is equivalent to

\begin{equation} \label{goalfrac1} \frac{ |\PP | } { |B_1| } \gg \frac{| \PP[P_{thick}] |  }{|P_{thick}|}. 
\end{equation}

Recall that $\PP$ is related to $\TT_B[W]$ by a linear change of variables.   When we undo that change of variables,  the planks $\PP$ become $\TT_B[W]$ and each plank $P$ becomes a $\delta \times \delta \times r$ tube.    Each thick plank $P_{thick}$ becomes a rectangular solid $T_{thick}$ with dimensions $\delta \times b' \times r$ with $\delta \le b' \le r$.   And the whole equation (\ref{goalfrac1}) becomes

$$ \frac{ |\TT_B[W]|} {|W|} \gg \frac{|\TT_B[T_{thick}] } {|T_{thick}|},$$

or equivalently

$$ \Delta(\TT_B,  W) \gg \Delta(\TT_B, T_{thick}). $$

This will follow from our modified maximal density factoring lemma (and it is exactly the reason for the modification).   By (\ref{factmaxmodbias}),  we have

$$ \Delta(\TT_B, W) / \Delta(\TT_B, T_{thick}) \gtrapprox \left( \frac{|W|}{|T_{thick}|} \right)^{\exfact}. $$

Since $W$ has dimensions $ a\times b \times r$  and $T_{thick}$ has dimensions $\delta \times b' \times r$ with $b' \le b$,  we have

$$ \left( \frac{|W|}{|T_{thick}|} \right)^{\exfact} \ge \left( \frac{a}{\delta} \right)^{\exfact}. $$

Since we are in the thick case,  we know that $a/\delta \gg 1$,  and so we get the desired bound $ \Delta(\TT_B, W) \gg \Delta(\TT_B, T_{thick})$.  
This finishes the proof in the thick case.

\section{The thin case}

Now we consider the thin case when $a \approx \delta$.   Recall that $W$ has dimensions $a \times b \times r$. 

Since $a \approx \delta$,  when we study $\TT_B[W]$,  we are essentially reduced to a 2-dimensional Kakeya problem.   Since $C_F(\TT_B[W], W) \lessapprox 1$,  we conclude that 

\begin{equation} \label{2dkak}
|U(\TT_B[W])| \approx |W|. 
\end{equation}

\subsection{Slab case}
Recall that $r = \delta^{\exscal}$, where $\exscal > 0$ is the constant from Lemma \ref{lemmain2vns}, which is a tiny constant that we get to choose.  
 If $b \ge \delta^{\exscal} r = \delta^{2 \exscal}$,  then $W$ is essentially a giant slab of dimensions $\approx \delta \times \delta^{\exscal} \times \delta^{2 \exscal}$.   Because we can choose $\exscal  > 0$ as small as we like,  and because we have a good understanding of slabs,  this is an easy case.     So from now on,  we can assume that $b \le \delta^{\exscal} r$.  

\subsection{Transverse case}
Next we analyze the typical intersection angle of planks in $\WW$.   Let $\theta \ge a/b \approx \delta/ b$ be a typical intersection angle.   If $\theta \gg \delta/b$,  then Lemma \ref{lemmathicken} tells us that
$|U(\WW) \cap B_{\theta b} | \approx |B_{\theta b}|$.   Since $U(\TT_B[W])$ essentially fills $W$,  it follows that $|U(\TT) \cap B_{\theta b} | \approx |B_{\theta b}|$.   This gives (\ref{eqgoaldens}) with radius $r = \theta b \gg \delta$.   So from now on,  we can assume that $\theta \approx a/b \approx \delta/b$.  

\subsection{Tangential case}
Define $\TT_W = \{ T \in \TT: |T \cap W| \sim \delta^2 r \}$.   The angle between any two tubes in $\TT_W$ is $\lessapprox b/r$.  Set $\rho_2 = b/r$.   Then there is a tube $T_{\rho_2} \in \TT_{\rho_2}$ so that $\TT_W \subset \TT[T_{\rho_2}]$,  and therefore $\mu(\TT_W) \lessapprox \mu(\TT[T_{\rho_2}])$.   Every $T \in \TT$ that enters $B$ is contained in some $T_B$ which is contained in some $W$.   Using (\ref{2dkak}), we get the inequality

$$ \mu(\TT) \lessapprox \mu(\TT[T_{\rho_2}]) \mu(\WW). $$

Recall that $\theta \approx \delta/b$ is a typical angle of intersection for $\WW$.   Let $S$ denote a $\theta \times 1 \times 1$ slab.  Then by Lemma \ref{lemmaredslab},  we have $\mu(\WW) \approx \mu(\WW_S)$.  We can make a linear change of variables that converts $S$ to $B_1$ and converts $\WW$ to a set $\tilde \TT$ of $\rho_2$-tubes in $B_1$.    By the maximal density factoring lemma, we know that $\Delta_{max}(\WW)$ is small.  With the original lemma, we would have $\Delta_{max}(\WW) \lessapprox 1$.  With our modified lemma, we have $\Delta_{max}(\WW) \lessapprox \delta^{-\exfact}$.  When one carefully keeps track of all small parameters, this turns out to be essentially as good as $\Delta_{\max}(\WW) \lessapprox 1$, so in these notes we just write $\Delta_{max}(\WW)\lessapprox 1$.  Since $\Delta_{max}(\WW) \lessapprox 1$,  we have $\Delta_{max}(\WW_S) \lessapprox 1$ and so $\Delta_{max}(\tilde \TT) \lessapprox 1$.   Since $\tilde \TT$ is a set of $\rho_2$-tubes in $B_1$ with $\Delta_{max}(\tilde \TT) \lessapprox 1$, we have $| \tilde \TT | \lessapprox \rho_2^{-2}$.   Now we can bound $\mu(\WW)$ using $K_{KT}(\beta)$ by 

$$ \mu(\WW) \approx \mu(\WW_S) = \mu(\tilde \TT) \lessapprox |\tilde \TT|^\beta \lessapprox (\rho_2^{-2})^\beta \ll | \TT_{\rho_2} |^\beta.$$

On the other hand,  $\TT[T_{\rho_2}]$ is Katz-Tao,  and so $K_{KT}(\beta)$ gives 

$$ \mu(\TT[T_{\rho_2}]) \lessapprox | \TT[T_{\rho_2}] |^\beta. $$

So all together we have

$$  \mu(\TT) \lessapprox \mu(\TT[T_{\rho_2}]) \mu(\WW) \ll | \TT[T_{\rho_2}] |^\beta | \TT_{\rho_2} |^\beta = | \TT |^\beta. $$

This gives the desired bound in the thin case.

\section{What's missing?} \label{secleftout}

This is a proof outline and not a proof.   We have left out some technical details,  and in this last section we briefly summarize what needs to be filled in.

\subsection{Keeping track of small parameters}

In these notes we are not completely precise about the meaning of $\lessapprox$ and $\ll$.   An inequality that we wrote as $A \lessapprox B$ often really means $A \le \delta^{-\eta} B$,  where $\eta$ is a small parameter.   There are many different small parameters $\eta_1,  \eta_2, ...$ that appear in the proof,  and we need to keep track of them.   We need to choose some of these small parameters much larger than others and we need to keep track of this and make sure it is consistent.

\subsection{Keeping track of the shading}

We began with a set of tubes $\TT$ equipped with a shading $Y$.   As we study $\TT$ throughout our proof,  we met many new sets of tubes or sets of convex sets with names like $\TT_\rho$ or $\WW$.   Each of these also has a shading.   In the full proof,  we need to define all of these shadings.  For each shading we define,  we need to check that $\lambda(Y) \gtrapprox 1$. 

Defining the shading is important for checking the fundamental inequalities of the form $\mu(\TT) \lessapprox \mu(\TT[W]) \mu(\WW)$.   Defining the shading and checking this inequality requires some careful pigeonholing.  This inequality does not hold in complete generality but it does hold in all the cases where we need it.

\end{document}